\documentclass[pdflatex,sn-mathphys-num]{sn-jnl}


\usepackage{graphicx}%
\usepackage{multirow}%
\usepackage{amsmath,amssymb}%
\usepackage{mathrsfs}%
\usepackage[title]{appendix}%
\usepackage{xcolor}%
\usepackage{textcomp}%
\usepackage{manyfoot}%
\usepackage{booktabs}%
\usepackage{algorithm}%
\usepackage{algorithmicx}%
\usepackage{algpseudocode}%
\usepackage{listings}%
\usepackage{multirow} 
\usepackage{hhline}  
\usepackage{lscape}
\usepackage{epstopdf}
\usepackage{subcaption}
\usepackage{changepage}%
\usepackage{ragged2e}
\usepackage{float}
\usepackage{color}
\usepackage{xcolor}
\usepackage{pb-diagram}  


\makeatletter
\g@addto@macro\th@plain{\thm@headpunct{}}
\makeatother




\theoremstyle{plain}%
\newtheorem{theorem}{Theorem}[section]
%

\theoremstyle{definition}%
\newtheorem{remark}{Remark}[section]%

\theoremstyle{definition}%
\newtheorem{definition}{Definition}[section]%
\newtheorem{lemma}{Lemma}[section]%
\newtheorem{assumption}{Assumption}

\raggedbottom

\begin{document}

\title[Restricted memory Riemannian quasi-Newton bundle method]{A restricted memory quasi-Newton bundle method for nonsmooth optimization on Riemannian manifolds}


\author[1]{\fnm{Chunming} \sur{Tang}}

\author[2]{\fnm{Shajie} \sur{Xing}}

\author*[3]{\fnm{Wen} \sur{Huang}}\email{wen.huang@xmu.edu.cn}
\author[4]{\fnm{Jinbao} \sur{Jian}}

\affil[1]{\orgdiv{School of Mathematics \& Center for Applied Mathematics of Guangxi}, \orgname{Guangxi University}, \orgaddress{\city{Nanning}, \postcode{530004}, \country{P. R. China}}}

\affil[2]{\orgdiv{School of Physical Science and Technology}, \orgname{Guangxi
			University}, \orgaddress{\city{Nanning}, \postcode{530004}, \country{P. R. China}}}

\affil*[3]{\orgdiv{School of Mathematical Sciences}, \orgname{ Xiamen University}, \orgaddress{\city{Xiamen}, \postcode{361005}, \country{P. R. China}}}

\affil[4]{\orgdiv{School of Mathematical Sciences \& Center for Applied Mathematics of Guangxi}, \orgname{Guangxi Minzu University}, \orgaddress{\city{Nanning}, \postcode{530006}, \country{P. R. China}}}


\abstract{In this paper, a restricted memory quasi-Newton bundle method for minimizing a locally Lipschitz continuous function over a Riemannian manifold is proposed. The curvature information of the objective function is approximated by applying a Riemannian version of the quasi-Newton updating formulas. A Riemannian subgradient aggregation technique is proposed and used to significantly reduce the computations in the quadratic programming subproblem when calculating the candidate descent direction. Moreover, a Riemannian line-search procedure is proposed to generate the stepsizes, and the process is finitely terminated under the assumption of a newly proposed Riemannian semismoothness. Global convergence of the proposed method is established: if the serious iteration steps are finite, then the last serious iterate is stationary; otherwise, every accumulation point of the serious iteration sequence is stationary. In addition, a modified algorithm with limited-memory quasi-Newton updates is presented to further reduce the computational cost.
Finally, numerical experiments demonstrate that (i) the quasi-Newton updates accelerate the convergence of the bundle method, (ii) the aggregation technique significantly reduces the computational cost for solving the quadratic programming subproblem, and (iii) the proposed methods outperform the compared state-of-the-art Riemannian optimization methods for locally Lipschitz continuous functions.}

\keywords{Riemannian optimization, Bundle method, Quasi-Newton update,  Subgradient aggregation, Global convergence}



\maketitle

\section{Introduction}\label{sec1}

In this paper, we consider the following Riemannian optimization problem
	\begin{equation*}
		\min f(x), \ \  {\rm s.t.} \ \ x\in \mathcal{M},
	\end{equation*}
	where $\mathcal{M}$ is an $n$-dimensional Riemannian manifold, and $f:\mathcal M\rightarrow\mathbb R$ is a locally Lipschitz continuous function which may be nonconvex and nonsmooth. Such kind of problems frequently arise in
	practical applications, such as quantum computation, computer vision, signal processing, machine learning, and data science; see, e.g.,~\cite{absil2009optimization,Absil2019}.

Many numerical methods have been developed for the case where the objective function $f$ is continuously differentiable on $\mathcal{M}$, including Riemannian versions of the steepest descent methods, Newton's methods, quasi-Newton methods, conjugate gradient methods, and trust-region methods; see, e.g.,  \cite{absil2009optimization,boumal2022intromanifolds,Hu2020,sato2021riemannian,ring2012optimization,huang2015broyden,huang2018riemannian,sakai2020hybrid,sakai2021sufficient} and the references therein.
Compared with a relatively rich literature on smooth cases,
	the research on Riemannian nonsmooth optimization (i.e., $f$ is nonsmooth and possibly nonconvex) seems far from being satisfied.
	In what follows, we summarize some existing related works, which are based on certain Riemannian versions of nonsmooth optimization techniques extended from the classical linear spaces  (see \cite{azagra2005nonsmooth,hosseini2011generalized,Hosseini2013}). Ferreira and Oliveira \cite{ferreira1998subgradient} proposed a Riemannian subgradient method, in which the objective function $f$ is assumed to be convex in the manifold setting, and the geodesics of $\mathcal M$ must be calculated.
	An iteration-complexity bound of this method was derived by Ferreira et al. \cite{ferreira2019iteration} under suitable conditions. 
     For minimizing locally Lipschitz continuous functions on Riemannian manifolds, Grohs and Hosseini \cite{grohs2016varepsilon}  presented an $\varepsilon$-subgradient algorithm, Grohs and Hosseini \cite{grohs2016nonsmooth} discussed nonsmooth trust-region algorithms, Hosseini and Uschmajew \cite{hosseini2017riemannian,hosseini2019gradient} studied the gradient sampling method, and Hosseini et al. \cite{hosseini2018line} proposed a nonsmooth Riemannian line-search algorithm and extended the classical BFGS algorithm to the nonsmooth Riemannian setting. 	
These methods either require the objective function to be geodesically convex, which restricts the range of applications, or rely on the computation of a quadratic programming problem with the size of at least the dimension of the manifold, which can be computationally expensive. Therefore, the existing results from the above literature either only give convergence analyses or numerical experiments restricted to small-dimension problems.
To overcome this difficulty, current research has developed various optimization methods based on structured functions or smoothing techniques, such as Riemannian proximal methods \cite{chen2020proximal,huang2021riemannian,huang2023inexact}, Riemannian proximal Newton methods~\cite{si2024riemannian,HS2024}, semismooth Newton-based augmented Lagrangian
method on matrix manifolds~\cite{zhou2021semi} and Riemannian smoothing gradient-type algorithms \cite{peng2023riemannian,zhang2024riemannian}. Although these methods perform well in solving medium-to-large size optimization problems, their applicability is constrained by specific structural requirements. The objective function must either be expressed as the sum of smooth and nonsmooth components, or it is difficult to construct effective smooth approximations for complex nonsmooth functions.

Another optional approach is the bundle methods, which are regarded as the most efficient and reliable methods for general nonsmooth optimization problems in linear spaces (see, e.g., the monographs \cite{makela2002survey,bagirov2014introduction} and the references therein), which can be viewed as stabilized variants of the cutting-plane method \cite{Cheney1959,Kelley1960}. These methods require solving a quadratic programming subproblem when computing candidate descent direction.
	Depending on the strategies of stabilization, bundle methods can be subdivided into proximal bundle methods \cite{Kiwiel1990,hare2010redistributed,DeOliveira2014,lv2018a,Tang2019,hoseini2019new}, trust-region bundle methods \cite{Schramm1992,Apkarian2008,Liu2019}, level bundle methods \cite{Lemarechal1995,DeOliveira2014OMS,Lan2015,Tang2022}, and quasi-Newton (also called variable metric) bundle methods \cite{lukvsan1999globally,vlvcek2001globally,haarala2007globally, Tang2022PJO} etc. Thanks to the success of bundle methods on traditional nonsmooth optimization problems, it is natural and meaningful to generalize them to solve Riemannian nonsmooth optimization problems. Of course, this generalization is certainly not straightforward. On the one hand, some special tools of Riemannian optimization (such as retraction and vector transport) are necessary to design the algorithms due to the possible nonlinear structure of the manifolds. On the other hand, the convergence analysis requires well combining the mechanism of bundle methods with the theory of Riemannian nonsmooth analysis. To the best of our knowledge, the first work that studies bundle method for Riemannian optimization was done recently by Hoseini Monjezi et al. \cite{hoseini2021proximal}, in which a proximal bundle method is proposed for nonconvex and nonsmooth optimization on Riemannian manifolds. 
 However, the subgradient aggregation technique has not been generalized to the Riemannian setting. Therefore, the size of the quadratic programming problem may get large.

    This paper aims to propose, for the first time, a Riemannian quasi-Newton bundle method applicable to complete Riemannian manifolds with positive injectivity radius.
    Our main contributions are summarized as follows.
	
\begin{itemize}
\item[$\bullet$] Motivated by the studies in \cite{lukvsan1999globally,vlvcek2001globally,haarala2007globally}, we integrate the advantages of Riemannian quasi-Newton methods \cite{huang2018riemannian,huang2015riemannian} and Riemannian bundle method \cite{hoseini2021proximal} to propose a restricted memory quasi-Newton bundle method for minimizing a locally Lipschitz continuous function over a Riemannian manifold. By aggregating the subgradients of null steps, time-consuming quadratic subproblems do not need to be solved. More precisely, we solve after each null step a simple quadratic programming with only three variables.
In contrast, the quadratic subproblems in \cite{hoseini2021proximal} may become difficult to solve when the number of null steps becomes large, since a linear constraint is appended to the subproblem after each null step.
	
\item[$\bullet$] We propose a Riemannian line-search procedure by extending the one used in \cite{lukvsan1999globally,vlvcek2001globally}. Under a newly introduced Riemannian semismoothness assumption, this procedure terminates finitely and produces a trial point such that either a sufficient decrease in the objective function is achieved or the knowledge to the information of $f$ is significantly enriched.
 Global convergence of the proposed method is established:
if the algorithm generates a finite number of serious steps followed by infinitely many null steps, then the last serious iterate is a stationary point of $f$;
otherwise, any cluster point of the sequence of serious iterates is a stationary point of $f$.

\item[$\bullet$] In order to further reduce the computational cost, we present a modified algorithm by incorporating it into the limited-memory BFGS and SR1 updates. Numerical results show that the proposed methods can effectively solve nonsmooth optimization problems on Riemannian manifolds.
\end{itemize}

We note that, when the manifold constraint $\mathcal{M}$ is dropped, i.e., $\mathcal{M} = \mathbb{R}^n$, the proposed Riemannian quasi-Newton bundle method  and its modified variant with limited-memory quasi-Newton updates are not equivalent to any existing Euclidean quasi-Newton bundle methods, but the spirit of the algorithmic framework closely follows from~\cite{lukvsan1999globally,vlvcek2001globally,haarala2007globally}. In particular, the method in  \cite{lukvsan1999globally} can only solve convex optimization problems, and the methods in \cite{vlvcek2001globally,haarala2007globally} 
differ from ours in the update of the matrix $\mathcal{H}_k$ and the generation of stepsizes.

After an earlier version of this paper was posted on arXiv\footnote{\url{https://arxiv.org/abs/2402.18308}}, we learned of two very recent works in~\cite{BHJ2024,HNP2024} proposing different versions of Riemannian bundle methods. In~\cite{BHJ2024}, a Riemannian bundle method is designed for geodesically convex functions without generalizing the subgradient aggregation technique to the Riemannian setting. The theoretical result is stronger than the existing method in the sense that this method converges to a minimizer rather than a stationary point. However, the convexity assumption restricts the application of this algorithm. 
 In~\cite{HNP2024}, a Riemannian bundle method based on a trust-region framework is proposed for nonconvex objective functions. A subgradient aggregation technique for the Riemannian trust-region framework is proposed, and the global convergence is established.
 However, no practical approaches for generating the second-order information are considered therein for the Riemannian bundle method. Moreover, 
 this aggregation technique ensures computational efficiency and numerical stability by controlling the size of the bundle. When handling large-scale problems, a larger size of the bundle is often required to maintain the model accuracy, which leads to a corresponding rise in the dimension of the trust-region subproblem and thus increases the difficulty of solving it. In contrast, the aggregation technique proposed in this paper yields a subproblem that remains three-dimensional regardless of the dimension of the original problem.

This paper is organized as follows. In Section \ref{sec2}, some standard notations, concepts, and conclusions related to the proposed method are described. In Section \ref{sec3} we specify the restricted memory quasi-Newton bundle method and the required line-search procedure. Convergence analysis is established in Section \ref{sec4}, and a modified
algorithm is presented in Section \ref{sec5}. The numerical experiments and some conclusions are given in Section \ref{sec6} and Section \ref{sec7}, respectively.

\section{Preliminaries}\label{sec2}

In this section, we first recall the outline of the traditional quasi-Newton bundle methods discussed in \cite{lukvsan1999globally,vlvcek2001globally,haarala2007globally}, and then review some standard notations, basic tools, and related results for nonsmooth analysis of Riemannian optimization.
	
\subsection{Outline of the quasi-Newton bundle methods in $\mathbb{R}^n$}\label{sec2.1}
	
In this subsection, we consider momentarily the case where $\mathcal M = \mathbb{R}^n$. Bundle methods belong to the class of first-order black-box methods, i.e., for any given point $x\in \mathbb{R}^n$, only the function value $f(x)$ and one arbitrary subgradient $g\in \partial f(x)$ are available, where $\partial f(x)$ represents the Clarke subdifferential \cite[Sec. 2.1]{Clarke1983} of $f$ at $x$.
Letting $k\in \{1,2,\dots\}$ be the iteration index, this class of methods produces two interrelated iteration sequences \cite{Kiwiel1990}: a sequence of trial points $\{y_k\}$ which is used to provide first-order information of the function $f$, and a sequence of basic points $\{x_k\}$ which is expected to converge to a stationary point of $f$. In the following description, we will see that if $f$ achieves a sufficient descent at $y_{k+1}$, then set $x_{k+1}=y_{k+1}$; otherwise set $x_{k+1}=x_k$.
In this sense, the point $x_k$ is often called the {\it stability center} of iteration $k$, since it corresponds to the ``best" estimate of the minimum obtained so far.

The quasi-Newton bundle methods \cite{lukvsan1999globally,vlvcek2001globally,haarala2007globally} calculate the search direction by
$$d_k=-H_k\tilde{g}_k,\ \ {\rm for}\ k\geq 1,$$
where $H_k \in \mathbb{R}^{n\times n}$ is a symmetric positive-definite matrix, which is an approximation to the inverse Hessian matrix of $f$ at $x_k$ if $f$ is smooth around $x_k$,
and $\tilde{g}_k$ is an aggregation (namely, a convex combination) of some past subgradients $g_j\in\partial f(y_j)$, $j\in J_k\subseteq\{1,\dots,k\}$. We call $\tilde{g}_k$
an aggregate subgradient which plays the same role as the gradient of $f$ used in the standard quasi-Newton methods.

A line-search \cite[Sec. 2]{vlvcek2001globally} is then performed to determine two stepsizes $t^k_{\rm L}\geq 0$ and $t^k_{\rm R}>0$, and define the new iterates as
$$x_{k+1}=x_k+t^k_{\rm L}d_k\ \ {\rm and}\ \  y_{k+1}=x_k+t^k_{\rm R}d_k,\ \ {\rm for}\ k\geq 1$$
with $y_1=x_1$. In particular, if the objective function $f$ achieves a sufficient descent at $y_{k+1}$, i.e.,
$$
f(y_{k+1})\leq f(x_k)-\theta_{\rm L}t^k_{\rm R}w_k
$$
holds with suitably large $t^k_{\rm R}$, where $\theta_{\rm L}\in (0,1/2)$ is a fixed line-search parameter, and $w_k>0$ is the desirable amount of descent for $f$ at $x_k$,
then set $t^k_{\rm L}=t^k_{\rm R}>0$, $x_{k+1}=y_{k+1}$, and declare a \textit{serious step} (or a descent step).
In this case, the stability center is updated to $y_{k+1}$.

Otherwise, a \textit{null step} is declared if $t^k_{\rm R}>t^k_{\rm L}=0$,
$x_{k+1}=x_k$ and
\begin{equation}\label{null-step-cond}
    -\alpha_{k+1} + g_{k+1}^{\rm T}d_k  \geq -\theta_{\rm R}w_k,
\end{equation}
where $\theta_{\rm R}\in (\theta_{\rm L},1/2)$ is fixed, $g_{k+1}\in\partial f(y_{k+1})$, and
$\alpha_{k+1}$ is the subgradient local measure (see \cite[Sec. 3.2]{Kiwiel1985}) defined as
\begin{equation}\label{linear-error}
\alpha_{k+1}=\max\{\vert f(x_k)-[f(y_{k+1})+  g_{k+1}^{\rm T}(x_k-y_{k+1})]\vert,\ \gamma\|x_k-y_{k+1}\|^\nu\},
\end{equation}
with $\nu\geq 1$ and $\gamma$ being a positive constant.
Note that, if $f$ is convex, then one can set $\gamma=0$, and therefore $\alpha_{k+1}$ reduces to the classical linearization error of $f$ linearized at $y_{k+1}$ and evaluated at $x_k$. The condition~(\ref{null-step-cond}) guarantees that after a null step the new trial point $y_{k+1}$ can provide enough information about $f$ in order to ensure convergence.

The next goal is to update the matrix $H_k$ and the aggregate subgradient $\tilde{g}_k$.
Note that the subgradient aggregation only happens in null steps, aiming to accumulate
enough information about the function until a serious step happens.
Once a serious step is taken, the algorithm will make a bundle reset and restart from the latest stability center. In what follows, the updates will be considered in two cases \cite{lukvsan1999globally,vlvcek2001globally}.

(i) {\it Update after a null step}.
Denote by $m$ the lowest index $j$ such that $x_j=x_k$, namely, the index of
iteration after the last serious step, which also implies that $y_m=x_m=x_{m+1}=\ldots=x_k$.
The new aggregate subgradient $\tilde{g}_{k+1}$ is generated by making a convex combination of three subgradients:
the basic subgradient $g_m\in \partial f(x_k)$,
the current aggregate subgradient $\tilde{g}_k$, and
the new subgradient $g_{k+1}\in \partial f(y_{k+1})$, i.e.,
\begin{equation}\label{Euclidean-agg1}     \tilde{g}_{k+1}=\lambda_{k}^{(1)}g_m+\lambda_{k}^{(2)}g_{k+1}+\lambda_{k}^{(3)}\tilde{g}_{k},
\end{equation}
where the coefficient vector $\lambda_k = (\lambda_{k}^{(1)}, \lambda_{k}^{(2)}, \lambda_{k}^{(3)})^{\rm T} \in \mathbb{R}^3$ is determined
by solving the quadratic subproblem
\begin{equation}\label{Eulidean-subprob}
\begin{array}{cll}
\lambda_k \in \arg\min_{\lambda} &\varphi(\lambda) :=& (\lambda^{(1)} g_m+\lambda^{(2)} g_{k+1}+\lambda^{(3)} \tilde{g}_k)^{\rm T}H_k(\lambda^{(1)} g_m+\lambda^{(2)} g_{k+1}+\lambda^{(3)} \tilde{g}_k) \\
&&+2(\lambda^{(2)}\alpha_{k+1}+\lambda^{(3)}\tilde{\alpha}_k)\\
~~~~~~~~~~~~{\rm s.t.} & &\sum\limits_{j=1}^{3}\lambda^{(j)}=1,\ \ \lambda^{(j)}\geq 0,\ \ j\in \{1,2,3\},
\end{array}
\end{equation}
in which $\lambda=(\lambda^{(1)},\lambda^{(2)},\lambda^{(3)})^{\rm T}$, and $\tilde{\alpha}_{k}$ and $\alpha_{k+1}$ are the
aggregate subgradient local measure and the new subgradient local measure, respectively.  Problem (\ref{Eulidean-subprob}) is convex because the feasible region{\color{magenta}, the} simplex $\{\lambda\in\mathbb{R}_+^{3}\ \vert\ \sum_{j=1}^3\lambda^{(j)}=1\}${\color{magenta},} is a convex set, and the function $\varphi$ is convex. Furthermore, if $g_m$, $g_{k+1}$, and $\tilde{g}_k$ are linearly independent, then $\varphi$ is strictly convex, and in this case, problem (\ref{Eulidean-subprob}) has a unique minimizer.
In fact, in (\ref{Euclidean-agg1}), the aggregation of the two subgradients
$g_{k+1}$ and $\tilde{g}_{k}$ (namely, set $\lambda_{k}^{(1)}=0$) are sufficient to ensure global convergence. The purpose of keeping the basic subgradient $g_m$ is to improve the numerical robustness of the algorithm, which is reasonable since $x_k$ is the best point found so far.

Finally, the new matrix $H_{k+1}$ is generated by the SR1 quasi-Newton formula; see~\cite[Sec. 2]{vlvcek2001globally}.

(ii) {\it Update after a serious step}. In this case, the update is simple, since the algorithm
restarts from the new stability center $x_{k+1}$, i.e.,  set
$\tilde{g}_{k+1}=g_{k+1}$ and $m=k+1$, and update $H_{k}$ by the BFGS quasi-Newton formula \cite[Sec. 2]{vlvcek2001globally}.

\subsection{Notations and some basic results for Riemannian nonsmooth optimization}
  In this section, we review some notation, concepts, and known results of differential geometry and Riemannian manifolds, which can be found in, e.g., \cite{lang1999fundamentals,sakai1992riemannian,absil2009optimization,boumal2022intromanifolds,sato2021riemannian}.
   These materials are not new and are provided only for the convenience of readers unfamiliar with Riemannian optimization.

   For ease of reference, we summarize the main notations used throughout this paper in Table~\ref{tab:notations}.
   \begin{table}[h]
  
\centering
\caption{Summary of notations.}
\label{tab:notations}
\begin{tabular}{cl}
\toprule
\textbf{Notation} & \textbf{Description} \\
\midrule
$\mathcal{M}$ & A complete $n$-dimensional Riemannian manifold. \\
$x \in \mathcal{M}$ & A point on the manifold $\mathcal{M}$. \\
$T_{x}\mathcal{M}$ & The tangent space to $\mathcal{M}$ at $x$. \\
$T\mathcal{M}$ & The tangent bundle of $\mathcal{M}$, defined as $\bigcup_{x \in \mathcal{M}} T_x \mathcal{M}$.\\
$\left \langle \cdot,\cdot \right \rangle_{x}$ & The inner product on $T_{x}\mathcal{M}$. \\
$\|\xi\|_x$ & The norm of a tangent vector $\xi \in T_{x}\mathcal{M}$, defined as $\sqrt{\left \langle \xi, \xi \right \rangle_{x}}$. \\
$c:[a,b] \rightarrow \mathcal{M}$ & A smooth curve on $\mathcal{M}$. \\
$\dot{c}(t)$ & The velocity (tangent vector) of the curve $c$ at parameter $t$. \\
$l(c)$ & The length of curve $c$, defined as $l(c) = \int_a^b \|\dot{c}(t)\|_{c(t)} \, \mathrm{d}t$. \\
$\mathrm{dist}(x, y)$ & The distance between $x, y \in \mathcal{M}$, defined as $\inf_{c(a)=x, c(b)=y} l(c)$.\\
$a^\flat$ & The flat of $a \in T_x\mathcal{M}$, defined as the linear map $a^\flat: T_x\mathcal{M} \rightarrow \mathbb{R}$, $v \mapsto \langle a, v \rangle_x$.\\

$\mathcal{A}_x$ & A linear operator on the tangent space $T_x\mathcal{M}$. \\
$\mathcal{A}_x^{*}$ & Adjoint operator defined by $\langle \mathcal{A}_x \eta_x, \xi_x \rangle_x = \langle \eta_x, \mathcal{A}_x^{*} \xi_x \rangle_x$, $\forall \ \eta_x, \xi_x \in T_x\mathcal{M}$.\\

$B(0_x, r)$ & The open ball of radius $r$ in $T_x\mathcal{M}$ about $0_x$, i.e., $\{ \eta_x \in T_x \mathcal{M} : \|\eta_x\|_x < r \}$.\\
$R$ &  A retraction on $\mathcal{M}$, mapping tangent vectors to points on $\mathcal{M}$. \\
$R_x$ & The restriction of retraction $R$ to the tangent space at $x$.\\
$R_x^{-1}$ & The inverse of retraction $R_x$ (locally defined in a neighborhood of $x$). \\
$\mathcal{T}$ & A vector transport on $\mathcal{M}$. \\
$\mathcal{T}^{-1}$ & The inverse of vector transport (if invertible). \\
${\rm id}_{T_x\mathcal{M}}$ & The identity operator on $T_x\mathcal{M}$. \\
${\rm D}R_x(0_x)$ & The derivative of retraction $R_x$ at $0_x$, which is the identity map.\\ 
${\rm D}R_x(\xi_x)$ & The derivative of the retraction $R_x$ at $\xi_x \in T_x\mathcal{M}$. \\
${\rm Inj}_R(\mathcal{M})$ & The injectivity radius of  $\mathcal{M}$ with respect to the retraction $R$. \\
${\rm diag}(A)$ & The diagonal matrix formed from the diagonal entries of matrix $A$. \\
\bottomrule
\end{tabular}
\end{table}

When it is clear from the context, we remove the subscript $x$ and simply write $\left \langle \cdot,\cdot \right \rangle$ and $\|\cdot\|$.
 If $\mathcal{A}_x = \mathcal{A}^{*}_x$, we say $\mathcal{A}_x$ is self-adjoint or symmetric.

The notion of retraction is used to define the update of iterates in Riemannian optimization algorithms. Specifically, a retraction $R$ on a manifold $\mathcal{M}$ is defined as a smooth mapping $R:T \mathcal{M} \rightarrow \mathcal{M}$ such that (i) $R_x(0_x) = x$ and (ii) $\mathrm{D} R_x(0_x): T_x \mathcal{M} \rightarrow T_x \mathcal{M}$ is an identity map, i.e., $\mathrm{D} R_x(0_x) =  {\rm id}_{T_x\mathcal M}$, where $0_x$ denotes the zero vector in $T_x \mathcal{M}$, and $R_x: T_x \mathcal{M} \rightarrow \mathcal{M}$ is the restriction of $R$ to $T_x \mathcal{M}$. An important retraction, called the exponential map, is defined by
	$
	{\rm Exp}:\ T\mathcal M\rightarrow \mathcal M:\ \xi \mapsto {\rm Exp}_x\xi = \gamma(1),
	$
where $\gamma$ is a geodesic (a curve with zero acceleration) satisfying $\gamma(0) = x$ and $\dot{\gamma}(0) = \xi$; see~\cite[Sec. 5.4]{absil2009optimization} for the rigorous definition of geodesic. In a Euclidean space, $R_x(\xi)={\rm Exp}_x\xi = x+\xi$ is the commonly-used update formula of iterates.

The proposed Riemannian bundle method needs to compute linear combinations of tangent vectors at different points. However, in the Riemannian setting, such linear combinations are not defined. The notion of vector transport is therefore introduced. Specifically, a vector transport associated to a retraction $R$ is defined as a continuous function $\mathcal T : T \mathcal{M} \oplus T \mathcal{M} \rightarrow T \mathcal{M}$, $(\eta_x, \xi_x)\mapsto \mathcal{T}_{\eta_x}(\xi_x)$, which satisfies for all $\eta_x,\ \xi_x\in T_x\mathcal M$,
(i) $\mathcal{T}_{\eta_x} :\ T_x\mathcal M\rightarrow T_{R_x(\eta_x)}\mathcal M$ is a linear invertible map, and (ii) $\mathcal{T}_{0_x}(\xi_x) = \xi_x$, where $\oplus$ denotes the Whitney sum. Both $T\mathcal{M}$ and $T\mathcal{M}\oplus T\mathcal{M}$ are endowed with a natural manifold structure, which induces a corresponding topological structure \cite{absil2009optimization,boumal2022intromanifolds}.
This definition differs from \cite[Def. 8.1.1]{absil2009optimization} in that it relaxes the smoothness requirement to continuity, which facilitates practical construction and numerical implementation, especially for handling nonsmooth problems. Additionally, we incorporate {\color{magenta}an} invertibility condition not present in \cite[Def. 8.1.1]{absil2009optimization}. Invertibility is crucial for the algorithm because quasi‑Newton method (such as SR1 and BFGS updates) frequently requires the inverse of the vector transport operator. In practice, invertibility is generally easy to satisfy; 
for example, parallel transport on the sphere naturally possesses this property within the neighborhood $B(0_x,\pi)$. Thus, for solving the maximum of multiple Rayleigh quotients problem \cite{grohs2016varepsilon,hoseini2021proximal} and the sparse vector problem \cite{7547961}, parallel transport ensures the invertibility of operator $\mathcal{T}$ and guarantees valid algorithm iterations.


An isometric vector transport $\mathcal{T}$ additionally preserves the Riemannian metric, i.e.,
	\begin{equation*}\label{vertran1}
		\langle\mathcal{T}_{\eta_x}(\xi_x),\mathcal{T}_{\eta_x}(\zeta_x)\rangle_{R_x(\eta_x)}=\langle\xi_x,\zeta_x\rangle_x.
	\end{equation*}
    We next define a function that plays a key role in the subsequent analysis: 
$$\beta : T\mathcal{M}\rightarrow \mathbb{R},\ \xi_x\mapsto \beta(\xi_x), \text{ where } \beta(\xi_x) =
\begin{cases}
1,& \text{if } \xi_x=0_x,\\
\frac{\|{\rm D}R_x(\xi_x)[\xi_x]\|}{\|\xi_x\|},& \text{otherwise}.
\end{cases}$$
The continuity of this function at $\xi_x=0_x$ is ensured by $\lim_{\xi_x\rightarrow 0_x}\beta({\xi_x})=\lim_{\xi_x\rightarrow 0_x}\frac{\|{\rm D}R_x(\xi_x)[\xi_x]\|}{\|\xi_x\|}=1$; for $\xi\neq 0_x$, the continuity follows directly. For notational brevity, we {\color{magenta}write} $\beta_{\xi_x}:=\beta(\xi_x)$ throughout the following.


A vector transport satisfies the \textit{locking condition}~\cite[Sec. 2]{huang2015broyden} if it further satisfies that
	\begin{equation}\label{vertran2}
		\mathcal{T}_{\xi_x}(\xi_x)=\beta^{-1}_{\xi_x}{\rm D}R_x(\xi_x)[\xi_x],
	\end{equation}
	where
	$
	{\rm D}R_x(\xi_x)[\xi_x]=\frac{\rm d}{{\rm d}t}R_x(t\xi_x)\Big\vert_{t=1}.
	$ Note that the exponential mapping and parallel translation satisfies~\eqref{vertran2} with $\beta^{-1}_{\xi_x} = 1$.

The injectivity radius \cite{hosseini2018line} of $\mathcal M$ with respect to the retraction $R$ is defined as ${\rm Inj}_R(\mathcal{M}):=\inf_{x\in\mathcal{M}}{\color{magenta}{\rm Inj}_R(x)},$ where 
${\color{magenta}{\rm Inj}_R(x)} := \sup\{ r> 0 \ \vert \ R_x: B(0_x, r) \rightarrow R_x(B(0_x,r))\ {\rm is \ {\color{magenta}injective}}\}$ with 
$R_x(B(0_x,r)) = \{R_x(\eta_x)\ \vert \ \eta_x\in B(0_x,r)\}$. Note that if $\mathcal{M}$ is compact, then ${\rm Inj}_R(\mathcal{M})>0$ for any retraction $R$. This follows from \cite[Cor. 10.21]{boumal2022intromanifolds}, which states that the injectivity radius ${\color{magenta}{\rm Inj}_R(x)}$ at each point $x$ is positive for any retraction $R$. Together with the compactness of $\mathcal{M}$ and the Heine–Borel theorem, this yields the desired conclusion (see \cite[Lem. 6.16]{lee2018introduction} for details).
In particular, ${\rm Inj}_R(\mathcal{M})=\infty$ when $\mathcal{M}$ is a Hadamard manifold, and the exponential map is used as a retraction \cite{sakai1992riemannian}.
Therefore, $R^{-1}_x (y)$ is a singleton for all $y \in 
R_x(B(0_x, {\rm Inj}_R(\mathcal{M})))$.
Let $\eta_x$ denote the unique tangent vector $R_x^{-1}(y)$.
We use the following intuitive notations:
	$\mathcal{T}_{x\rightarrow y}(\xi_x) := \mathcal{T}_{\eta_x}(\xi_x),\  \mathcal{T}_{x\leftarrow y}(\xi_y):=\mathcal{T}_{\eta_x}^{-1}(\xi_y).$
In addition, for simplicity of notation we {\color{magenta}write} $\hat{\mathcal{T}}_{x\leftarrow y} = \beta_{\eta_x}\mathcal{T}_{x\leftarrow y}$.

	A function $f:\ \mathcal M \rightarrow \mathbb R$ is said to be Lipschitz continuous near $x\in \mathcal M$ of rank $L$ if there exists a constant $L>0$ such that $\vert f(x)-f(y)\vert\leq L{\rm dist}(x,y)$ holds for all $y$ in an open neighborhood of $x$. A function $f$ is said to be locally Lipschitz continuous on
	$\mathcal M$ if it is Lipschitz continuous near $x$ for all $x\in\mathcal{M}$.
	In this paper, we assume that the objective function $f$ is locally Lipschitz continuous on $\mathcal M$.  Let $X$ be a Hilbert space and $h :\ X \rightarrow \mathbb{R}$ be Lipschitz continuous near a given point $x\in X$.
	The Clarke directional derivative \cite[Sec. 1.2]{Clarke1983} of $h$ at $x$ in the direction $d \in X$ is defined as 
 	$$
	h^\circ(x;d)=\limsup_{y\rightarrow x\atop  t\downarrow 0}\frac{h(y+td)-h(y)}{t}.
 	$$ 	The Clarke subdifferential \cite[Sec. 1.2]{Clarke1983} of $h$ at $x$ is defined by
	$\partial h(x) =\{\xi \in X\ \vert\ \langle\xi, d\rangle\leq h^\circ(x; d),\ \forall d \in X\}.$
    Since the pullback function $\hat{f}_x = f \circ R_x: T_x \mathcal{M} \rightarrow \mathbb{R}$ is defined on a Hilbert space, the Clarke subdifferential of $\hat{f}_x$ is well-defined. The Riemannian Clarke directional derivative \cite[Sec. 2]{hosseini2011generalized} of $f$ at $x$ in the direction $d \in T_x\mathcal{M}$ is defined by $f^\circ(x; d) = \hat{f}_x^{\circ}(0_x; d)$.   The \textit{Riemannian Clarke subdifferential} of $f$ at $x$ is defined by\footnote{Here, we use the same notation $\partial f(x)$ as in linear spaces without leading to any confusion.}
	\begin{equation*}\label{RCSubdiff}
		\partial f(x) = \partial \hat{f}_x(0_x).
	\end{equation*}
	For convenience, the elements of $\partial f(x)$  are also called subgradients of $f$ at $x$.

    \begin{remark}
        The definition of the Riemannian Clarke subdifferential does not depend on the choice of retraction $R$. For any two retractions $R_x$ and $R'_x$, set $f_1=f\circ R_x$ and $f_2=f\circ R'_x$. By definition, it suffices to show $\partial f_1(0_x) = \partial f_2(0_x)$. Define the map $\varphi = (R'_x)^{-1}\circ R_x$. By the properties of retractions, $\varphi$ is a diffeomorphism near $0_x$, and $D\varphi(0_x) = \mathrm{id}_{T_x\mathcal{M}}$.  
Since $f_1=f_2\circ\varphi$, the chain rule for Clarke subdifferentials yields $\partial f_1(0_x)\subseteq \partial f_2(0_x)$; the reverse inclusion follows similarly. For a detailed proof, see \cite{absil2009optimization,boumal2022intromanifolds,hosseini2011generalized}.
    \end{remark}

	
	The following theorem summarizes some fundamental and important properties of the Riemannian Clarke subdifferential, whose proof can be found in \cite[Thm. 2.9]{hosseini2011generalized} and \cite[Thm. 2.3]{hoseini2021proximal}.
	


 \begin{theorem}\label{nonsmo-theo}
		Let $f :\ \mathcal M\rightarrow \mathbb{R}$ be Lipschitz continuous near $x\in\mathcal M$ of rank $L$. Then
		
		(i) $\partial f(x)$ is a nonempty, convex, and compact subset of $T_x\mathcal M$, and $\|\xi\|\leq L$ for all $\xi\in\partial f(x)$;
		
		(ii) $f^\circ(x; d) = \max\{\langle\xi, d\rangle\ \vert\ \xi\in \partial f(x)\}$ for all $d\in T_x\mathcal M$;
		
		(iii) if the sequences $\{x_i\}\subseteq \mathcal M$  and $\{\xi_i\}\subseteq T\mathcal M$ satisfy that $\xi_i \in \partial f(x_i)$ for each $i$, and if $\{x_i\}$ converges to $x$, and $\xi$ is a cluster point of the sequence $\{\mathcal{T}_{x\leftarrow x_i}(\xi_i)\}$, then we have $\xi \in \partial f(x)$;
		
		(iv) $\partial f$ is upper semicontinuous at $x$.
	\end{theorem}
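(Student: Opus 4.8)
The plan is to reduce every assertion to the classical Clarke calculus on the Hilbert space $T_x\mathcal{M}$ applied to the pullback $\hat{f}_x = f\circ R_x$, exploiting the very definition $\partial f(x)=\partial\hat{f}_x(0_x)$. The first thing to record is that $\hat{f}_x$ is locally Lipschitz near $0_x$: since $R_x$ is smooth with ${\rm D}R_x(0_x)={\rm id}_{x}$, the retraction is a local diffeomorphism near $0_x$ and distorts the Riemannian distance by a factor tending to $1$ as its arguments approach $0_x$; combined with the rank-$L$ Lipschitz property of $f$ near $x$, this shows that the infimal Lipschitz modulus of $\hat{f}_x$ at $0_x$ equals $L$. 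With this identification in hand, the classical theory (\cite{Clarke1983}) does most of the work.

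For (i) and (ii) I would simply invoke the Euclidean Clarke theorem for $\hat{f}_x$ at $0_x$. Nonemptiness, convexity and compactness of $\partial\hat{f}_x(0_x)$ are standard (finite-dimensionality of $T_x\mathcal{M}$ makes the weak$^*$ and norm topologies coincide), and the bound $\|\xi\|\le L$ follows from the modulus identification of the previous paragraph. Part (ii) is then immediate from $f^\circ(x;d)=\hat{f}_x^\circ(0_x;d)$ together with the classical support-function identity $\hat{f}_x^\circ(0_x;d)=\max\{\langle\xi,d\rangle:\xi\in\partial\hat{f}_x(0_x)\}$.

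The substantive part is (iii), the closedness of the subdifferential map across varying base points. Given any $d\in T_x\mathcal{M}$, I would transport it back to each $T_{x_i}\mathcal{M}$ by setting $d_i=\mathcal{T}_{x\leftarrow x_i}^{-1}(d)$, and use the isometry property (\ref{vertran1}) (valid for the inverse as well) to write $\langle\mathcal{T}_{x\leftarrow x_i}(\xi_i),d\rangle_x=\langle\xi_i,d_i\rangle_{x_i}\le f^\circ(x_i;d_i)$, the inequality being exactly $\xi_i\in\partial\hat{f}_{x_i}(0_{x_i})$. Passing to the subsequence along which $\mathcal{T}_{x\leftarrow x_i}(\xi_i)\to\xi$, and noting $d_i\to d$ by continuity of the vector transport together with $\mathcal{T}_{0_x}={\rm id}_{x}$, the target inequality $\langle\xi,d\rangle_x\le f^\circ(x;d)$ reduces to the upper-semicontinuity estimate $\limsup_i f^\circ(x_i;d_i)\le f^\circ(x;d)$.

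This last estimate is the main obstacle, and it is genuinely more delicate than its Euclidean counterpart, because the derivatives $f^\circ(x_i;\cdot)=\hat{f}_{x_i}^\circ(0_{x_i};\cdot)$ sit over different base points and are computed through different pullbacks $\hat{f}_{x_i}$. I would dispose of it by transferring all the data into one fixed chart around $x$ (equivalently, expressing $\hat{f}_{x_i}$ through $R_{x_i}$ and using the joint continuity of $R$ and $\mathcal{T}$ in their arguments), so that the quantities $f^\circ(x_i;d_i)$ become Clarke directional derivatives of a single locally Lipschitz function, evaluated at points converging to $0_x$ along directions converging to $d$; a uniform Lipschitz bound on a neighbourhood of $x$ then lets the classical upper semicontinuity of $(x,d)\mapsto h^\circ(x;d)$ close the argument. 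Finally, (iv) is a routine consequence of (iii): were upper semicontinuity to fail, there would exist $x_i\to x$ and $\xi_i\in\partial f(x_i)$ whose transports remain a fixed distance outside a prescribed neighbourhood of $\partial f(x)$, yet the uniform bound $\|\xi_i\|\le L$ (from (i), on a neighbourhood of $x$) forces a convergent subsequence of $\{\mathcal{T}_{x\leftarrow x_i}(\xi_i)\}$ whose limit lies in $\partial f(x)$ by (iii), a contradiction.
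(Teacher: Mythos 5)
Your proposal is essentially correct, but note that the paper itself contains no proof of this theorem to compare against: it is imported verbatim with a pointer to \cite[Thm.~2.9]{hosseini2011generalized} and \cite[Thm.~2.3]{hoseini2021proximal}. What you have done is reconstruct the argument behind those citations, and your route --- reduce every claim to classical Clarke calculus \cite{Clarke1983} for the pullback $\hat f_x = f\circ R_x$ on the linear space $T_x\mathcal M$, using that $R_x$ is smooth with ${\rm D}R_x(0_x)={\rm id}_x$ so that the local Lipschitz modulus of $\hat f_x$ at $0_x$ is at most $L$ --- is exactly the standard one. Parts (i), (ii) and (iv) are handled correctly: (i) and (ii) are the classical Clarke theorems read through the definition $\partial f(x)=\partial\hat f_x(0_x)$, and (iv) follows from (iii) plus the uniform bound $\|\xi_i\|\le L$ and the isometry of the transports by the usual compactness contradiction.

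The one place where your sketch of (iii) leans on an ingredient you do not name explicitly is the step ``the quantities $f^\circ(x_i;d_i)$ become Clarke directional derivatives of a single locally Lipschitz function.'' Making this rigorous requires the invariance of the Clarke directional derivative under composition with a $C^1$ local diffeomorphism: for $h$ locally Lipschitz and $F$ a $C^1$ diffeomorphism one has $(h\circ F)^\circ(z;v)=h^\circ\left(F(z);{\rm D}F(z)[v]\right)$, which applied to $h=f\circ\varphi^{-1}$ and $F=\varphi\circ R_{x_i}$ (whose differential at $0_{x_i}$ is ${\rm D}\varphi(x_i)$, since ${\rm D}R_{x_i}(0_{x_i})={\rm id}_{x_i}$) yields $f^\circ(x_i;d_i)=h^\circ\left(\varphi(x_i);{\rm D}\varphi(x_i)[d_i]\right)$. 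This identity is standard (it follows from a first-order expansion of $F$ together with the Lipschitz property, or from Clarke's chain rule), and it is precisely what legitimizes transferring all data into one fixed chart; after that, your appeal to the classical joint upper semicontinuity of $(z,v)\mapsto h^\circ(z;v)$, together with $d_i\to d$ from continuity of $\mathcal T$, the isometry property (\ref{vertran1}), and ${\rm D}\varphi(x_i)[d_i]\to{\rm D}\varphi(x)[d]$, closes the argument as you describe. With that chain-rule step made explicit, the proof is complete and self-contained, which is arguably more informative than the paper's citation-only treatment.
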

    
    The Clarke subdifferential is adopted in this paper primarily for its theoretical necessity and computational convenience as it not only directly satisfies the key properties required for convergence analysis, specifically Theorem \ref{nonsmo-theo} (i) and (iii), but also has explicit expressions or computable approximations for common nonsmooth optimization problems, making algorithm implementation straightforward.
	
	A necessary condition for reaching a local minimum of $f$ on $\mathcal M$ is given below \cite[Prop. 2.5]{grohs2016varepsilon}.
	\begin{theorem}\label{stationary}
		Suppose that the function $f :\ \mathcal M\rightarrow \mathbb{R}$ is locally Lipschitz continuous, and that $f$ reaches a local minimum at $x$. Then $0_x \in \partial f(x)$.
	\end{theorem}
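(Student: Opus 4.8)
The plan is to reduce the Riemannian statement to the classical Clarke optimality condition in a Hilbert space by passing to the pullback $\hat{f}_x = f\circ R_x$. Since the Riemannian Clarke subdifferential is defined by $\partial f(x) = \partial \hat{f}_x(0_x)$, where $\hat{f}_x : T_x\mathcal M \rightarrow \mathbb{R}$ is a function on the Hilbert space $T_x\mathcal M$, it suffices to prove that $0_x \in \partial \hat{f}_x(0_x)$ in the sense of the Clarke subdifferential defined earlier.

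First I would check that $\hat{f}_x$ attains a local minimum at $0_x$ and is locally Lipschitz there. Using the retraction property $R_x(0_x) = x$ together with the continuity (indeed smoothness) of $R_x$, every tangent vector $\xi_x$ sufficiently close to $0_x$ is mapped to a point $R_x(\xi_x)$ close to $x$. Because $f$ has a local minimum at $x$, this gives $\hat{f}_x(\xi_x) = f(R_x(\xi_x)) \geq f(x) = \hat{f}_x(0_x)$ for all such $\xi_x$, so $0_x$ is a local minimizer of $\hat{f}_x$. Moreover, $\hat{f}_x$ is locally Lipschitz near $0_x$, since $f$ is locally Lipschitz near $x$ and $R_x$ is smooth, so the composition is locally Lipschitz.

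Next I would establish the optimality inequality directly from the definition of the Clarke directional derivative. For any fixed direction $d \in T_x\mathcal M$, the local minimality at $0_x$ ensures that the difference quotient $\tfrac{\hat{f}_x(0_x + td) - \hat{f}_x(0_x)}{t}$ is nonnegative for all sufficiently small $t > 0$. Restricting the limsup in the definition of $\hat{f}_x^\circ(0_x; d)$ to the admissible subfamily with base point frozen at $y = 0_x$ then yields
\[
\hat{f}_x^\circ(0_x; d) \;\geq\; \limsup_{t \downarrow 0} \frac{\hat{f}_x(0_x + td) - \hat{f}_x(0_x)}{t} \;\geq\; 0 \;=\; \langle 0_x, d\rangle.
\]
Since this holds for every $d \in T_x\mathcal M$, the defining condition of the Clarke subdifferential gives $0_x \in \partial \hat{f}_x(0_x) = \partial f(x)$, which is the desired conclusion.

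I do not expect a serious obstacle, as this is essentially the manifold analogue of the elementary fact that the zero covector lies in the Clarke subdifferential at an unconstrained local minimum. The only point requiring care is the passage from the joint limsup over $(y,t)$ to the one-variable limsup along the ray $t \mapsto 0_x + td$; this is legitimate precisely because fixing $y = 0_x$ singles out an admissible subfamily over which the joint limsup dominates, and every term of that subfamily is nonnegative by local minimality.
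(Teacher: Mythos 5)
Your proof is correct. Note that the paper itself does not prove this theorem at all: it simply cites it from the reference [Grohs--Hosseini, \emph{$\varepsilon$-subgradient algorithms}], so there is no internal proof to compare against. Your argument is the natural one and fills this in cleanly: since $\partial f(x)$ is \emph{defined} as $\partial \hat{f}_x(0_x)$ for the pullback $\hat{f}_x = f\circ R_x$, the statement reduces to the classical fact that the zero element lies in the Clarke subdifferential at a local minimum of a locally Lipschitz function on the Hilbert space $T_x\mathcal M$; your verification that $0_x$ is a local minimizer of $\hat{f}_x$ (via $R_x(0_x)=x$ and continuity of $R_x$), that $\hat{f}_x$ is Lipschitz near $0_x$, and that freezing $y=0_x$ in the joint limsup legitimately bounds $\hat{f}_x^\circ(0_x;d)$ from below by the one-variable difference quotients, are exactly the points that need checking, and each is handled correctly.
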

	
	Based on Theorem \ref{stationary}, it is natural to call $x$ a stationary point of $f$ on $\mathcal M$ if $0_x \in \partial f(x)$. The purpose of our algorithm described in the next section is to find such a point efficiently.

\section{Restricted memory Riemannian quasi-Newton bundle method}\label{sec3}
	
	In this section, we extend the classical quasi-Newton bundle methods to solve nonsmooth optimization problems on Riemannian manifolds. The terminology ``restricted memory" means that at each iteration only a simple subproblem with three variables is solved instead of a time-consuming quadratic subproblem.
	This is achieved by making use of the subgradient aggregation technique.
	The quasi-Newton operator is updated by the Riemannian versions of the SR1 and BFGS formulas, which well balances theoretical properties
	and numerical performance. A Riemannian line-search procedure is executed to determine whether the new step is a serious step or a null step.

\subsection{Algorithm description}\label{sec3.1}
Now we present the details of our algorithm (Algorithm \ref{Algo:RQNBM}), which generalizes the steps described in Section \ref{sec2.1} to the manifold setting.
The proposed algorithm also generates two iteration sequences:
the sequence in $\mathcal{M}$ of trial points $\{y_k\}$ and the sequence in $\mathcal{M}$ of stability centers $\{x_k\}$.
In particular, given an initial iterate $x_1\in \mathcal{M}$, we set $y_1=x_1$, the initial aggregate subgradient $\tilde{g}_1=g_1\in \partial f(y_1)$ and choose an initial symmetric positive-definite operator $\mathcal{H}_1$ on $T_{x_1}\mathcal{M}$. For the $k$th iteration, we define the search direction as
$$d_k=-\mathcal{H}_k\tilde{g}_k,$$
where $\mathcal{H}_k$ is a symmetric positive-definite
linear operator on $T_{x_k}\mathcal{M}$, which approximates the inverse Riemannian Hessian of $f$ at
$x_k$ if $f$ is smooth around $x_k$,
and $\tilde{g}_k$ is an aggregate subgradient which plays the same role as the Riemannian gradient of smooth $f$.

We extend the line-search procedure in Euclidean spaces to the Riemannian setting (see Procedure \ref{Procedure:LS} below), which produces two stepsizes $t_{\rm L}^k\geq 0$ and $t_{\rm R}^k>0$. The next trial point is then defined by $y_{k+1}=R_{x_k}(t^k_{\rm R}d_k)$.
Roughly speaking, if $f$ achieves a sufficient descent at $y_{k+1}$  with a suitably large stepsize (see Remark \ref{remark31}(c.1)), then we have
$t^k_{\rm L}=t^k_{\rm R}>0$ and $x_{k+1}=y_{k+1}$ (serious step); otherwise, we have $t^k_{\rm R}>t^k_{\rm L}=0$ and $x_{k+1}=x_k$ (null step). Note that a null step is also useful, since $y_{k+1}$
may significantly enrich the knowledge of the information of $f$ (see condition (\ref{null-step-cond2})).
After a serious step, a sequence of consecutive null steps is produced until a new serious step is generated. The number of consecutive null steps between two adjacent serious steps may be finite or infinite. For the latter case, we can prove that the last
serious iterate is stationary for $f$ on $\mathcal M$ (see Theorem \ref{Thm:4.2}). Otherwise, if the algorithm generates an infinite number of serious iterates, then every accumulation point of this sequence is stationary (see Theorem \ref{Thm:4.3}).

In order to update $\mathcal{H}_k$, we need to transfer it to the tangent space $T_{x_{k+1}}\mathcal M$ by using a vector transport 
\begin{equation}\label{Hk_wave}
\mathcal{\tilde{H}}_k=\mathcal{T}_{x_k\rightarrow x_{k+1}}\circ\mathcal{H}_k\circ\mathcal{T}_{x_k\leftarrow x_{k+1}}.
\end{equation}
 Then an intermediate operator $\check{\mathcal{H}}_{k+1}$ is generated by applying either the Riemannian SR1 update (see \eqref{SR1update}) or the Riemannian BFGS update (see \eqref{BFGSupdate}), or simply by being set to
 be
 $\mathcal{\tilde{H}}_k$. Specifically, if the current step is a null step, generate $\check{\mathcal{H}}_{k+1}$ using the Riemannian SR1 updating formula. This update 
 guarantees the convergence of the algorithm when an infinite number of consecutive null steps are generated (see Theorem \ref{Thm:4.2}).
After a serious step, the algorithm behaves as if it were
restarted
from the current iterate, and therefore we adopt the more efficient BFGS formula to generate  $\check{\mathcal{H}}_{k+1}$.
The new operator $\mathcal{H}_{k+1}$ is finally obtained by scaling and/or correcting $\check{\mathcal{H}}_{k+1}$.

The update of the aggregate subgradient $\tilde{g}_k$ is divided into two cases.
 After a serious step, since the algorithm will restart from the new stability center $x_{k+1}$, we make an aggregation reset by setting $\tilde{g}_{k+1}=g_{k+1}$. After a null step,
 we need to make a convex combination of the basic subgradient $g_m\in \partial f(x_k)$ (recall that $m$ is the lowest index $j$ satisfying $x_j=x_k$),
 the new subgradient $g_{k+1}\in \partial f(y_{k+1})$, and the current aggregate subgradient $\tilde{g}_k$. However, these three subgradients belong to different linear spaces since $g_m,\ \tilde{g}_k\in T_{x_k}\mathcal M$ but
 $g_{k+1}\in T_{x_{k+1}}\mathcal M$. Thus, we transport $g_{k+1}$ to the tangent space $T_{x_k}\mathcal M$ by a vector transport. The coefficients of the convex combination are obtained by solving a quadratic problem with only three variables. See Remark \ref{remark31} (e) for more details.

\begin{algorithm}
\caption{Riemannian Quasi-Newton Bundle Method (RQNBM)}
\label{Algo:RQNBM}
\begin{algorithmic}[1]
\Require  Initial iterate $x_1\in \mathcal{M}$;  initial symmetric positive-definite operator $\mathcal{H}_1$ on $T_{x_1}\mathcal{M}$; correction parameters $\rho \in(0, 1)$ and $\Gamma\geq 1$;
stepsize control parameters $t_{\rm min}\in (0,1)$ and $t_{\max}\geq 1$;  constant $0<\mu_0<{\rm Inj}_R(\mathcal{M})$; length control $D>0$; tolerance $\varepsilon\geq 0$.

\State Set $y_1=x_1$, and compute $g_1\in \partial f(y_1)$. Initialize 
$\tilde{g}_1=g_1$, $\alpha_1=0$,  $\tilde{\alpha}_1=0$, $w_1=\langle \tilde{g}_1,\tilde{g}_1\rangle$, $m=1$, $i_{\rm C}=i_{\rm U}=n_{\rm C}=0$ and $k=1$. \label{Step:1}

\For {$k=1, 2, \ldots$}     

\If {$w_k\leq\varepsilon$}
 Return $x_k$. \Comment{{\it Stopping criterion}} \label{Step:stoping}
\EndIf

\State Set $d_k=-\mathcal{H}_k\tilde{g}_k$. \Comment{{\it Generate direction}}\label{dk}

\State Select $t^k_{\rm I}\in \left(0, \min\{t_{\max}, \mu_0/\|d_k\|\}\right]$, and call Procedure \ref{Procedure:LS}:  \label{Line-search0}
\State \hskip 1.2cm $(t_{\rm L}^k, t_{\rm R}^k,\delta_{k+1}, \alpha_{k+1})={\rm LS}(x_k, d_k, w_k, t^k_{\rm I}, t_{\rm min})$.  \label{Line-search}\ {\Comment{{\it Line-search}}}

 \State Set $x_{k+1}=R_{x_k}(t^k_{\rm L}d_k)$ and $y_{k+1}=R_{x_k}(t^k_{\rm R}d_k)$. \Comment{{\it Iteration update}} \label{Iteration-update}

 \State Compute $g_{k+1}\in \partial f(y_{k+1})$, and define $u_k = g_{k+1}-\mathcal{T}_{x_k\rightarrow y_{k+1}}(g_m)$, \label{Step:prepare-update1}
 \State $s_k=$ $\mathcal{T}_{x_k\rightarrow x_{k+1}}(t^k_{\rm R}d_k)$, $\tilde{u}_k=\mathcal{T}_{x_{k+1}\leftarrow y_{k+1}}(u_k)$,
$v_k=\mathcal{\tilde{H}}_k\tilde{u}_k-s_k$, where $\mathcal{\tilde{H}}_k$ is defined in~\eqref{Hk_wave}.	\label{Step:prepare-update2}
\If{ $t^k_{\rm L}=0$}  \Comment{{\textit{Update after a null step}}}
\State	Solve subproblem (\ref{mainsubprob}) to obtain $\lambda_k$. \label{Step:solveQP}
\State Compute $\tilde{g}_{k+1}$ and $\tilde{\alpha}_{k+1}$ by (\ref{aggre1}) and (\ref{aggre2}).   \Comment{{\it Subgradient aggregation}} \label{Step:Sub-agg}
\If{$\langle\tilde{g}_{k},v_k\rangle<0$ and either $i_{\rm C}=0$ or (\ref{SR1cond1}) holds} \label{Step:SR1-update-condition}
  \State Set $i_{\rm U}=1$, and calculate $\check{\mathcal{H}}_{k+1}$ by (\ref{SR1update}).  
\Comment{{\it SR1 update}} \label{Step:SR1-update}	
\Else
\State Set $i_{\rm U}=0$ and $\check{\mathcal{H}}_{k+1}=\mathcal{\tilde{H}}_k$.
\EndIf

\Else\ ($t^k_{\rm L}>0$)   \Comment{{\bf{\it Update after a serious step}}}
\State	Set $\tilde{g}_{k+1}=g_{k+1}$, $\tilde{\alpha}_{k+1}=0$, $m=k+1$. 
\Comment{{\it Aggregation reset}} \label{Step:20}
\If{$\langle u_k,s_k\rangle> \rho$}\label{Step:BGFS-update-condition}
    \State Set $i_{\rm U}=1$, and calculate $\check{\mathcal{H}}_{k+1}$ by  (\ref{BFGSupdate}). \Comment{{\it BFGS update}}  \label{Step:BFGS-update}
\Else
   \State  Set $i_{\rm U}=0$ and $\check{\mathcal{H}}_{k+1}=\mathcal{\tilde{H}}_k$.
\EndIf	\label{Step:25}
\EndIf
\If{$\|\check{\mathcal H}_{k+1}\tilde{g}_{k+1}\|>D$}
\State Set $\check{\mathcal{H}}_{k+1} =  (D/\|\check{\mathcal{H}}_{k+1}\tilde{g}_{k+1}\|)\check{\mathcal{H}}_{k+1}$.   \label{Step: scaling} \Comment{{\it Scaling}}
\EndIf
 \State Set
      $\check{w}_{k+1} =\langle\tilde{g}_{k+1},\check{\mathcal{H}}_{k+1}\tilde{g}_{k+1}\rangle+2\tilde{\alpha}_{k+1}.$ \label{Step:30}
     \If{$\check{w}_{k+1}<\rho\|\tilde{g}_{k+1}\|^2$ or $i_{\rm C}=i_{\rm U}=1$}\label{Step:31}
\State Set $w_{k+1}=\check{w}_{k+1}+\rho\|\tilde{g}_{k+1}\|^2$ and $\mathcal{H}_{k+1}=\check{\mathcal{H}}_{k+1}+ \rho {\rm id}_{T_{x_{k+1}}\mathcal{M}}$. \label{Step: correction}
\Comment{{\it Correction}}
\State Set $n_{\rm C}=n_{\rm C}+1$.
    \Else
    \State Set $w_{k+1}=\check{w}_{k+1}$ and $\mathcal{H}_{k+1}=\check{\mathcal{H}}_{k+1}$. \label{Step:no-correction}
    \EndIf \label{Step:36}

   \If{$n_{\rm C}\geq \Gamma$} Set $i_{\rm C}=1$. \label{Step:37}
   \EndIf \label{Step:38}
 \EndFor
\end{algorithmic}
\end{algorithm}


{\floatname{algorithm}{Procedure}
\renewcommand{\thealgorithm}{1}
\begin{algorithm}
\caption{Line-search: $(t_{\rm L}, t_{\rm R}, \delta, \alpha) = {\rm LS}(x, d, w, t_{\rm I}, t_{\rm min})$}
\label{Procedure:LS}
\begin{algorithmic}[1]
\Require Positive parameters $ \theta_{\rm A}, \theta_{\rm L}, \theta_{\rm R}, \theta_{\rm T}$ satisfying $\theta_{\rm T}+\theta_{\rm A}<\theta_{\rm R}<\frac{1}{2}
$ and $\theta_{\rm L}<\theta_{\rm T}$,	$\gamma>0$, $\theta>0$, $\kappa\in(0, \frac{1}{2})$, $\nu\geq 1$.
	
\State Set $t_{\rm A}=0$ and $t=t_{\rm U}=t_{\rm I}$.
\State \label{LS:Step 2} Calculate $f(R_{x}(td))$, $g\in \partial f(R_{x}(td))$ and
	\begin{equation*}\label{lineproc1}	
      \delta=\max \{\vert f(x)-f(R_x(td))+t\langle \hat{\mathcal{T}}_{x\leftarrow R_x(td)}(g),d \rangle\vert,\gamma(t\|d\|)^\nu\}.
	\end{equation*}
\If{$f(R_x(td))\leq f(x)-\theta_{\rm T}tw$} \label{LS:3}
\State Set $t_{\rm A}=t$.
\Else
\State Set $t_{\rm U}=t$.
\EndIf \label{LS:7}
	
\If{$f(R_x(td))\leq f(x)-\theta_{\rm L}tw$ and either $t\geq t_{\rm min}$ or $\delta>\theta_{\rm A}w$} \label{LS:8}
\State Set $t_{\rm R}=t_{\rm L}=t$, $\alpha =0$, and return.
\EndIf\label{LS:10}
	
\If{$-\delta+\langle \hat{\mathcal{T}}_{x\leftarrow R_x(td)} (g), d \rangle \geq -\theta_{\rm R}w$ and $(t-t_{\rm A})\|d\|< \theta$} \label{LS:11}
\State Set $t_{\rm R}=t$, $t_{\rm L}=0$, $\alpha =\delta$, and return.
\EndIf  \label{LS:13}

\State  Choose $t\in [t_{\rm A}+\kappa(t_{\rm U}-t_{\rm A}), t_{\rm U}-\kappa(t_{\rm U}-t_{\rm A})]$, and go to Step \ref{LS:Step 2}.\label{LS:14}
\end{algorithmic}
\end{algorithm}	}

\begin{remark}\label{remark31}
 Some comments on Algorithm \ref{Algo:RQNBM} are given in order.
\begin{itemize}
\item[(a)] In Step \ref{Step:1}, $i_{\rm C}$ denotes the indicator of correction, $i_{\rm U}$ is the indicator of quasi-Newton update, and $n_{\rm C}$ denotes the counter of corrections.

\item[(b)] If the algorithm stops finitely at Step \ref{Step:stoping}, then we obtain that
the current stability center $x_k$ is an (approximate) stationary point (Theorem \ref{Thm:finite-stop}).

\item[(c)] In Steps \ref{Line-search0} and \ref{Line-search}, we propose a Riemannian line-search procedure to generate the stepsizes $t_{\rm L}^k$ and $t_{\rm R}^k$ as well as the Riemannian subgradient local measure $\alpha_{k+1}$, which extends the classical one in Euclidean spaces (see \cite{Kiwiel1985,lukvsan1999globally}) to the manifold setting.
 The choice of the parameter $t^k_{\rm I}\in \left(0, \min\{t_{\max}, \mu_0/\|d_k\|\}\right]$ implies that $t_{\rm R}^k\leq t^k_{\rm I}\leq \mu_0/\|d_k\|$, which ensures that $y_{k+1}=R_{x_k}(t^k_{\rm R}d_k)\in R_{x_k}(B(0_{x_k}, {\rm Inj}_R(\mathcal{M})))$, and therefore $t_{\rm R}^k d_k=R_{x_k}^{-1}(y_{k+1})$ is valid.
This procedure may produce two outcomes as follows.

 (c.1) $t_{\rm R}^k=t_{\rm L}^k>0$, $\alpha_{k+1}=0$ and either
		         $t_{\rm L}^k\geq t_{\min}$ or $\delta_{k+1}>\theta_{\rm A}w_k$, as well as
            \begin{equation*}
	   f(y_{k+1})\leq f(x_k)-\theta_{\rm L}t_{\rm R}^kw_k.
	 \end{equation*}
This case means that $f$ achieves a sufficient descent at $y_{k+1}$ with {\color{magenta}suitably} large $ t^k_{\rm L}$, and therefore declare a serious step. Here, $t \geq t_{\min}$ ensures that sufficient descent is achieved with a sufficiently large stepsize, while $\delta_{k+1} > \theta_A w_k$ requires the subgradient local measure (the linearization error) $\delta_{k+1}$ to exceed $\theta_A w_k$. The latter implies that even if $t \geq t_{\min}$ is not satisfied, as long as the sufficient descent condition holds, the new iterate remains at a certain distance from the current point, and thus the step is still acceptable; see \cite[Sec. 7.2]{Kiwiel1985} for details.

(c.2) $t_{\rm R}^k>t_{\rm L}^k=0$ 
and (recalling the notation  $\hat{\mathcal{T}}_{x\leftarrow y} = \beta_{\eta_x}\mathcal{T}_{x\leftarrow y}$)
\begin{equation}\label{null-step-cond2}
    \ -\alpha_{k+1}+\langle \hat{\mathcal{T}}_{x_{k}\leftarrow y_{k+1}}(g_{k+1}),d_k\rangle \geq -\theta_{\rm R}w_k,
\end{equation}
 where the subgradient locality measure is calculated by
	       \begin{equation}\label{deltak}
		         \alpha_{k+1}=\max \{\vert f(x_k)-f(y_{k+1})+t_{\rm R}^k\langle\hat{\mathcal{T}}_{x_{k}\leftarrow y_{k+1}}(g_{k+1}),d_k\rangle\vert,\ \gamma(\|R_{x_k}^{-1}(y_{k+1})\|)^\nu\}.
		       \end{equation}	
It is clear that relations (\ref{null-step-cond2}) and (\ref{deltak}) generalize \eqref{null-step-cond}
and \eqref{linear-error} in Euclidean spaces, respectively.
In this case, a null step is declared, and condition (\ref{null-step-cond2})
implies that we have obtained enough information about $f$ at $y_{k+1}$ in order to ensure convergence. From the viewpoint that bundle methods are stabilized variants of the cutting-plane method,  condition (\ref{null-step-cond2}) also implies that a high-quality cutting plane is generated at~$y_{k+1}$.

\item[(d)] The notations $u_k$, $s_k$, $\tilde{u}_k$, and $v_k$ defined in Steps \ref{Step:prepare-update1} and \ref{Step:prepare-update2} are used to form the quasi-Newton updating formulas, which are extended from those in Euclidean spaces by using a vector transport.

\item[(e)] In Steps \ref{Step:solveQP} and \ref{Step:Sub-agg}, we aim to update
    the aggregate subgradient $\tilde{g}_k$ and the aggregate subgradient local measure $\tilde{\alpha}_{k}$. This is done by first solving the following quadratic subproblem
    to obtain $\lambda_k = (\lambda_{k}^{(1)}, \lambda_{k}^{(2)}, \lambda_{k}^{(3)})^{\rm T} \in \mathbb{R}^3$:
	\begin{equation}\label{mainsubprob}
		\begin{array}{cll}
			\lambda_k \in \arg\min_{\lambda} &\varphi(\lambda) := \|\lambda^{(1)}\mathcal{W}_kg_m+\lambda^{(2)}\mathcal{W}_k\hat{\mathcal{T}}_{x_k\leftarrow y_{k+1}}(g_{k+1})+\lambda^{(3)}\mathcal{W}_k\tilde{g}_k\|^2\\
   &\ \ \ \ \ \ \ \ \ \ \ +2(\lambda^{(2)}\alpha_{k+1}+\lambda^{(3)}\tilde{\alpha}_k)\\
			~~~~~~~~{\rm s.t.}& \sum\limits_{j=1}^{3}\lambda^{(j)}=1,\ \ \lambda^{(j)}\geq 0,\ \ j\in \{1,2,3\},
		\end{array}
	\end{equation}
where $\lambda = (\lambda^{(1)}, \lambda^{(2)}, \lambda^{(3)})^{\rm T}$, and $\mathcal{W}_k=\mathcal{H}_k^{1/2}$ (which is not computed actually). This problem is convex because the simplex $\{\lambda\in\mathbb{R}_+^3\ \vert \ \sum_{j=1}^3\lambda^{(j)}=1\}$ is convex, and the function $\varphi$ is convex. Furthermore, if $g_m$, $\hat{\mathcal{T}}_{x_k\leftarrow y_{k+1}}(g_{k+1})$, and $\tilde{g}_k$ are linearly independent, then $\varphi$ is strictly convex, and (\ref{mainsubprob}) has a unique minimizer. The updates are given by	
 \begin{eqnarray}
\tilde{g}_{k+1}&=&\lambda_{k}^{(1)}g_m+\lambda_{k}^{(2)}\hat{\mathcal{T}}_{x_{k}\leftarrow y_{k+1}}(g_{k+1})+\lambda_{k}^{(3)}\tilde{g}_{k},\label{aggre1}  \\
		\tilde{\alpha}_{k+1}&=&\lambda_{k}^{(2)}\alpha_{k+1}
		+\lambda_{k}^{(3)}\tilde{\alpha}_{k}.\label{aggre2}
\end{eqnarray}

\item[(f)] 
In Step \ref{Step:SR1-update}, the conditions
\begin{equation}\label{SR1cond1}
 			\rho\|\tilde{g}_{k+1}\|^2\leq \frac{\langle\tilde{g}_{k+1},v_k\rangle^2}{\langle\tilde{u}_k,v_k\rangle}\ \  {\rm and} \ \  \rho n\leq \frac{\|v_k\|^2}{\langle\tilde{u}_k,v_k\rangle}
\end{equation}
serve as one of the judgment criteria in Step~\ref{Step:SR1-update-condition} for the SR1 update. 
The Riemannian SR1 update is of the form 
	\begin{equation}\label{SR1update}
		\check{\mathcal{H}}_{k+1}=\mathcal{\tilde{H}}_k-\frac{v_kv_k^\flat}{\tilde{u}_k^\flat v_k},
	\end{equation}
whose specific structure is utilized to prove convergence; see also the proof of \eqref{wk+1} and \eqref{trace1}.  
Formula (\ref{SR1update}) corresponds to the inverse SR1 update as presented in~\cite[Algo. 1]{huang2015riemannian}.
Note that the SR1 update is executed only in null steps, namely $x_{k+1}=x_k$.
In this case, the vector transports $\mathcal{T}_{x_k\rightarrow x_{k+1}}$ and $\mathcal{T}_{x_k\leftarrow x_{k+1}}$ are both identity maps, and thus the corresponding notations
can be simplified as $\mathcal{\tilde{H}}_k=\mathcal{H}_k$, $\tilde{u}_k=\mathcal{T}_{x_{k}\leftarrow y_{k+1}}(u_k)$, $s_k=t^k_{\rm R}d_k$, and $v_k=\mathcal{{H}}_k\tilde{u}_k-s_k$.

 \item[(g)] 
 In Step \ref{Step:BFGS-update}, after a serious step,
 we adopt the following Riemannian BFGS updating formula \cite[Sec. 3]{qi2010riemannian}
	\begin{equation}\label{BFGSupdate}
	\check{\mathcal{H}}_{k+1}=\mathcal{\tilde{H}}_k-\frac{s_k(\mathcal{\tilde{H}}_k^*u_k)^\flat}
		{u_k^\flat s_k}-\frac{\mathcal{\tilde{H}}_ku_ks_k^\flat}{u_k^\flat s_k}+\frac{a_ks_ks_k^\flat}{(u_k^\flat s_k)^2},
	\end{equation}
	where $\mathcal{\tilde{H}}_k^*$ is the adjoint operator
of $\mathcal{\tilde{H}}_k$, and $a_{k}=\langle u_{k},\mathcal{\tilde{H}}_{k}u_{k}\rangle+\langle u_{k},s_{k}\rangle$. In fact, in the setting of this paper, $\mathcal{\tilde{H}}_k$ is a self-adjoint operator, i.e. $\mathcal{\tilde{H}}_k^*=\mathcal{\tilde{H}}_k$, since $\mathcal{\tilde{H}}_k$ is symmetric, and the vector transport is isometric.
We point out that the BFGS update mainly aims to enhance numerical performance.
Except for preserving positive-definiteness under suitable conditions, the specific structure of \eqref{BFGSupdate} is not used to prove convergence.

  \item[(h)] The conditions $\langle\tilde{g}_{k},v_k\rangle<0$ and
  $\langle u_k,s_k\rangle>\rho$  in Step \ref{Step:SR1-update-condition} and Step \ref{Step:BGFS-update-condition} respectively are used to ensure the positive-definiteness of $\check{\mathcal{H}}_{k+1}$.
  In particular, the condition $\langle\tilde{g}_{k},v_k\rangle<0$ implies  $\langle\tilde{u}_{k},v_k\rangle>0$ (see Lemma~\ref{lemma3}), which guarantees that the relations defined in  (\ref{SR1cond1}) are well-defined.

  \item[(i)] In Step \ref{Step: scaling}, the scaling of $\check{\mathcal{H}}_{k+1}$ ensures
  the boundedness of the sequence $\{\check{\mathcal{H}}_k\tilde{g}_k\}$,  which is required for convergence (see Theorem \ref{Thm:4.3}).
  The correction in Step \ref{Step: correction} is also required to establish global convergence  (see Theorem \ref{Thm:4.2}).

   \item[(j)] When condition $\check{w}_{k+1}<\rho\|\tilde{g}_{k+1}\|^2$ in Step \ref{Step:31} is satisfied for $\Gamma$ times, we set $i_C=1$ by Step
   \ref{Step:37}. 
   If the SR1 update is used in later iterations, then
   the correction in Step \ref{Step: correction} is executed, which could make SR1 update condition untrue, and therefore the frequency of SR1 update is reduced. This may improve the numerical performance.
   \end{itemize}
   \end{remark}

   The following lemma shows the fundamental property of the operators $\check{\mathcal{H}}_k$, and $\mathcal{H}_k$ generated by Algorithm \ref{Algo:RQNBM}.

   \begin{lemma}\label{lemma3}
   The operators $\check{\mathcal{H}}_k$ and $\mathcal{H}_k$ generated by Algorithm \ref{Algo:RQNBM} are all symmetric positive-definite. In addition, if the condition $\langle\tilde{g}_{k},v_k\rangle<0$ in Step \ref{Step:SR1-update-condition} is satisfied, then $\langle\tilde{u}_{k},v_k\rangle>0$.
   \end{lemma}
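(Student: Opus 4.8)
The plan is to run a single induction on $k$, establishing symmetry first and then positive definiteness, and to slot the ``in addition'' claim into the symmetric rank-one case, where it is exactly what is needed. For symmetry I would note that $\mathcal{H}_1$ is symmetric by choice, and that because the vector transport is isometric we have $\mathcal{T}_{x_k\leftarrow x_{k+1}}=\mathcal{T}_{x_k\rightarrow x_{k+1}}^{*}$, so the congruence $\mathcal{\tilde{H}}_k=\mathcal{T}_{x_k\rightarrow x_{k+1}}\circ\mathcal{H}_k\circ\mathcal{T}_{x_k\leftarrow x_{k+1}}$ of (\ref{Hk_wave}) sends self-adjoint operators to self-adjoint operators. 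The rank-one term $v_kv_k^\flat$ in the SR1 formula (\ref{SR1update}) is self-adjoint, the two middle terms of the BFGS formula (\ref{BFGSupdate}) are mutual adjoints once $\mathcal{\tilde{H}}_k^*=\mathcal{\tilde{H}}_k$ is used, and $s_ks_k^\flat$ is self-adjoint; the scaling of Step~\ref{Step: scaling} and the correction $+\rho\,\mathrm{id}_{x_{k+1}}$ of Step~\ref{Step: correction} evidently preserve symmetry. This makes every $\check{\mathcal{H}}_k$ and $\mathcal{H}_k$ symmetric.

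For the ``in addition'' assertion (which I would invoke while $\mathcal{H}_k$ is already known positive definite from the running induction) I would exploit the null-step simplifications of Remark~\ref{remark31}(f): $\mathcal{\tilde{H}}_k=\mathcal{H}_k$, $s_k=-t_R^k\mathcal{H}_k\tilde{g}_k$, and hence $v_k=\mathcal{H}_k(\tilde{u}_k+t_R^k\tilde{g}_k)$. Setting $A=\langle\tilde{u}_k,\mathcal{H}_k\tilde{u}_k\rangle$, $B=\langle\tilde{u}_k,\mathcal{H}_k\tilde{g}_k\rangle$, $C=\langle\tilde{g}_k,\mathcal{H}_k\tilde{g}_k\rangle$ and $t=t_R^k>0$, symmetry of $\mathcal{H}_k$ gives $\langle\tilde{g}_k,v_k\rangle=B+tC$ and $\langle\tilde{u}_k,v_k\rangle=A+tB$. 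Since $\langle\cdot,\mathcal{H}_k\cdot\rangle$ is an inner product, Cauchy--Schwarz yields $B^2\leq AC$ with $C>0$; and $\langle\tilde{g}_k,v_k\rangle<0$ forces $B<-tC<0$, so
\begin{equation*}
\langle\tilde{u}_k,v_k\rangle=A+tB\ \geq\ \frac{B^2}{C}+tB\ =\ \frac{B(B+tC)}{C}\ >\ 0,
\end{equation*}
the last inequality because $B$ and $B+tC$ are both negative. This both proves the claim and shows that the denominator in (\ref{SR1update}) and the quotients in (\ref{SR1cond1}) are well posed.

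For positive definiteness the base case is $\mathcal{H}_1$, and $\mathcal{\tilde{H}}_k$ inherits positive definiteness from $\mathcal{H}_k$ via the isometry. The no-update branch gives $\check{\mathcal{H}}_{k+1}=\mathcal{\tilde{H}}_k$ and is trivial. For the BFGS branch I would recast (\ref{BFGSupdate}) in the product (Cholesky-type) form
\begin{equation*}
\check{\mathcal{H}}_{k+1}=\mathcal{V}_k^{*}\,\mathcal{\tilde{H}}_k\,\mathcal{V}_k+\frac{s_ks_k^\flat}{\langle u_k,s_k\rangle},\qquad \mathcal{V}_k=\mathrm{id}_{x_{k+1}}-\frac{u_ks_k^\flat}{\langle u_k,s_k\rangle},
\end{equation*}
and use $\langle u_k,s_k\rangle>\rho>0$ from Step~\ref{Step:BGFS-update-condition}: both summands are positive semidefinite, and a vector killed by both is forced to be $0$. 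The SR1 branch is the crux. Here I would first record the identity
\begin{equation*}
\langle v_k,\tilde{u}_k\rangle-\langle v_k,\mathcal{\tilde{H}}_k^{-1}v_k\rangle=\langle s_k,\tilde{u}_k\rangle-\langle s_k,\mathcal{\tilde{H}}_k^{-1}s_k\rangle=-t_R^k\langle\tilde{g}_k,v_k\rangle>0,
\end{equation*}
which follows from $v_k=\mathcal{\tilde{H}}_k\tilde{u}_k-s_k$ and the null-step relation $\mathcal{\tilde{H}}_k^{-1}s_k=-t_R^k\tilde{g}_k$. Then, diagonalizing in the $\mathcal{\tilde{H}}_k$-metric via $\hat{w}=\mathcal{\tilde{H}}_k^{1/2}w$ and $\hat{v}=\mathcal{\tilde{H}}_k^{-1/2}v_k$, one has $\langle\check{\mathcal{H}}_{k+1}w,w\rangle=\|\hat{w}\|^2-\langle\hat{v},\hat{w}\rangle^2/\langle v_k,\tilde{u}_k\rangle$, and Cauchy--Schwarz together with $\langle v_k,\tilde{u}_k\rangle>\langle v_k,\mathcal{\tilde{H}}_k^{-1}v_k\rangle=\|\hat{v}\|^2$ gives strict positivity for $w\neq 0$. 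Finally the positive scalar scaling and the correction $\check{\mathcal{H}}_{k+1}+\rho\,\mathrm{id}_{x_{k+1}}$ keep positive definiteness, closing the induction.

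I expect the SR1 branch to be the only real obstacle, since a symmetric rank-one correction need not preserve positive definiteness. The decisive structural fact is that in a null step $s_k$ is a negative multiple of $\mathcal{H}_k\tilde{g}_k$ (because $d_k=-\mathcal{H}_k\tilde{g}_k$ and $s_k=t_R^k d_k$), which is precisely what collapses the generic preservation condition $\langle v_k,\mathcal{\tilde{H}}_k^{-1}v_k\rangle<\langle v_k,\tilde{u}_k\rangle$ down to the guard $\langle\tilde{g}_k,v_k\rangle<0$ checked in Step~\ref{Step:SR1-update-condition}. Pinning down this equivalence is the heart of the argument; everything else is bookkeeping.
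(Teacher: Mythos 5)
Your proposal is correct, and its skeleton is the paper's: an induction on $k$, with everything hinging on the same pivotal positivity, namely $\langle s_k,\tilde{u}_k-\mathcal{H}_k^{-1}s_k\rangle=-t_R^k\langle\tilde{g}_k,v_k\rangle>0$; moreover your $A,B,C$ Cauchy--Schwarz argument for the ``in addition'' claim is, up to notation, exactly the paper's proof of that part (the paper works with $\langle\tilde{u}_k,d_k\rangle=-B$). The genuine divergence is in how the two update branches are closed. For the SR1 branch the paper passes to the inverse operator, quoting from \cite{huang2015riemannian} the Riemannian inverse SR1 formula
\begin{equation*}
\check{\mathcal{H}}_{k+1}^{-1}=\mathcal{H}_k^{-1}+\frac{(\tilde{u}_k-\mathcal{H}_k^{-1}s_k)(\tilde{u}_k-\mathcal{H}_k^{-1}s_k)^\flat}{\langle s_k,\tilde{u}_k-\mathcal{H}_k^{-1}s_k\rangle},
\end{equation*}
which is manifestly positive definite once the denominator is positive; you instead stay with the primal downdate (\ref{SR1update}) and prove definiteness directly, by Cauchy--Schwarz in the $\tilde{\mathcal{H}}_k$-metric together with the identity $\langle v_k,\tilde{u}_k\rangle-\langle v_k,\tilde{\mathcal{H}}_k^{-1}v_k\rangle=-t_R^k\langle\tilde{g}_k,v_k\rangle$. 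For the BFGS branch the paper simply cites \cite{huang2015broyden}, whereas you rederive positive definiteness from the product (Cholesky-type) form of (\ref{BFGSupdate}). The trade-off: the paper's route is shorter because it delegates the operator algebra to the cited Riemannian quasi-Newton literature; yours is self-contained and makes explicit a fact the paper leaves implicit, namely that the guard $\langle\tilde{g}_k,v_k\rangle<0$ of Step \ref{Step:SR1-update-condition} is precisely the classical SR1 preservation condition $\langle v_k,\tilde{\mathcal{H}}_k^{-1}v_k\rangle<\langle\tilde{u}_k,v_k\rangle$ collapsed by the null-step relation $s_k=-t_R^k\mathcal{H}_k\tilde{g}_k$. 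Two lines worth adding for completeness: $C>0$ requires $\tilde{g}_k\neq 0$, which holds because $\tilde{g}_k=0$ would force $\langle\tilde{g}_k,v_k\rangle=0$ and violate the guard; and before dividing by $\langle\tilde{u}_k,v_k\rangle$ in the SR1 branch you should point back to the ``in addition'' claim (already available at that stage of your induction) to certify that this denominator is positive.
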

  \makeatletter
\let\sn@oldproof\proof
\let\sn@endoldproof\endproof
\renewenvironment{proof}[1][\proofname]
  {\begin{sn@oldproof}[#1]\normalsize}  
  {\end{sn@oldproof}}
\makeatother
   \begin{proof}
   For the first part, we prove by induction that the operators $\mathcal{H}_k$ and $\check{\mathcal{H}}_{k+1}$ are symmetric positive-definite for all $k\geq 1$.
   In fact, for $k=1$, it follows from Algorithm \ref{Algo:RQNBM} that $\mathcal{H}_1$ is symmetric positive-definite. Therefore, $\mathcal{H}_1^{-1}$ and $\mathcal{\tilde{H}}_1$ are symmetric positive-definite.
   
   For $\check{\mathcal{H}}_{2}$, if the SR1 update is used, we have $\langle\tilde{g}_1,v_1\rangle<0$ and $\tilde{g}_1\neq 0$.
   From Remark \ref{remark31} (f), it follows that $v_1=\mathcal{{H}}_1\tilde{u}_1-s_1$ and $s_1=t^1_{\rm R}d_1$. Hence
    \begin{equation*}
    \begin{split}
         -\langle\tilde{g}_1,v_1\rangle&=\langle-\tilde{g}_1,\mathcal{{H}}_1\tilde{u}_1-s_1\rangle = \langle{\mathcal{H}}_1^{-1}d_1,\mathcal{{H}}_1\tilde{u}_1-s_1\rangle\\
         &= \frac{1}{t_{\rm R}^1}\langle{\mathcal{H}}_1^{-1}s_1,\mathcal{{H}}_1\tilde{u}_1-s_1\rangle
         = \frac{1}{t_{\rm R}^1}\langle{s_1,\tilde{u}_1-\mathcal{H}}_1^{-1}s_1\rangle,
    \end{split}
    \end{equation*}
   which implies that $\langle{s_1,\tilde{u}_1-\mathcal{H}}_1^{-1}s_1\rangle>0$.
   In addition, from \cite[Algo. 1]{huang2015riemannian}, we know that
        \begin{equation*}\label{SRupdate}	\check{\mathcal{H}}_{k+1}^{-1}=\mathcal{H}_k^{-1}+\frac{(\tilde{u}_k-\mathcal{H}_k^{-1}s_k)(\tilde{u}_k-\mathcal{H}_k^{-1}s_k)^\flat}{\langle s_k,\tilde{u}_k-\mathcal{H}_k^{-1}s_k\rangle}.
       \end{equation*}
   Thus $\check{\mathcal{H}}_{2}$ is symmetric positive-definite. If the BFGS update is used,
   then we have $\langle u_1,s_1\rangle>\rho>0$ which guarantees that $\check{\mathcal{H}}_{2}$ is symmetric positive-definite (see \cite[Sec. 2]{huang2015broyden}). Otherwise, $\check{\mathcal{H}}_{2}=\mathcal{\tilde{H}}_1$, showing that $\check{\mathcal{H}}_{2}$ is symmetric positive-definite.
   
   Now, we assume that $\mathcal{H}_k$ and $\check{\mathcal{H}}_{k+1}$ are symmetric positive-definite. It follows from Steps \ref{Step: correction} and \ref{Step:no-correction} of Algorithm \ref{Algo:RQNBM} that $\mathcal{H}_{k+1}$ is symmetric positive-definite.
   Moreover, by applying the same analysis for $\check{\mathcal{H}}_{k+2}$ above, we obtain that $\check{\mathcal{H}}_{k+2}$ is symmetric positive-definite.

   For the second part. If $\langle\tilde{g}_k,v_k\rangle<0$, by the positive-definiteness of $\mathcal{H}_k$,
    we obtain	$$\langle\tilde{u}_k,d_k\rangle>\langle\tilde{u}_k,d_k\rangle+\langle\tilde{g}_k,v_k\rangle=-\langle\tilde{g}_k,t^k_{\rm R}d_k\rangle= t^k_{\rm R}\langle\tilde{g}_k,\mathcal{H}_k\tilde{g}_k\rangle>0,$$	
   	which implies $\tilde{u}_k\neq 0$, and therefore $\langle\tilde{u}_k,\mathcal{H}_k\tilde{u}_k\rangle>0$.
   	Using the Cauchy inequality, we get
         \begin{equation*}
   			\langle\tilde{u}_k,d_k\rangle^2 =\langle\tilde{g}_k,\mathcal{H}_k\tilde{u}_k\rangle^2
   			\leq  \langle\tilde{g}_k,\mathcal{H}_k\tilde{g}_k\rangle \langle\tilde{u}_k,\mathcal{H}_k\tilde{u}_k\rangle
   			< \frac{1}{t_{\rm R}^k}\langle\tilde{u}_k,\mathcal{H}_k\tilde{u}_k\rangle \langle d_k,\tilde{u}_k\rangle.
   	\end{equation*}
   	Hence,	$$\langle\tilde{u}_k,v_k\rangle=\langle\tilde{u}_k,\mathcal{H}_k\tilde{u}_k\rangle-\langle t_{\rm R}^kd_k,\tilde{u}_k\rangle>0,$$
   which completes the proof.  \qed
   \end{proof}

   \subsection{Well-definedness of the line-search procedure}\label{sec3.2}

   In order to prove the well-definedness of Procedure \ref{Procedure:LS}, we propose a Riemannian version of the semismooth assumption on the objective function $f$, which is an extension of the Euclidean version in
    \cite[Sec. 3.3]{Kiwiel1985} and is weaker than the existing Riemannian version in~\cite[Def. 2.1]{ghahraei2011semismooth}.
   
   \begin{definition}\label{def3.1}
   A locally Lipschitz continuous function  $f:\mathcal{M}\rightarrow \mathbb{R}$ on a Riemannian manifold $\mathcal{M}$ is said to be semismooth at $x\in \mathcal{M}$ if there exists a chart $(U, \varphi)$ at $x$ such that $f\circ\varphi^{-1}:\varphi(U)\rightarrow \mathbb{R}$ is semismooth at $\varphi(x)\in\mathbb{R}^{n}$. That is, for any $\hat{v}\in\mathbb{R}^n$ and sequences $\{\hat{g}_i\}$ and $\{t_i\}\subseteq \mathbb{R}_{+}$ satisfying $\hat{g}_i\in\partial (f\circ\varphi^{-1})(\varphi(x)+t_i\hat{v})$ and $t_i\downarrow 0$, we have
   \begin{equation*}
        \limsup_{i\rightarrow \infty} \hat{g}_i^{\rm T}\hat{v}\geq\liminf_{i\rightarrow \infty}\frac{f\circ\varphi^{-1}(\varphi(x)+t_i\hat{v})-f\circ\varphi^{-1}(\varphi(x))}{t_i}.
   \end{equation*}
    The function $f$ is semismooth on $\mathcal{M}$ if it is semismooth at all $x\in \mathcal{M}$.
   \end{definition}
   
   \begin{remark}
   For Definition \ref{def3.1}, we make the following comments.
   
   \begin{itemize}
   
    \item [(a)]
        If $f$ satisfies the semismooth property defined by \cite[Def. 2.1]{ghahraei2011semismooth}, then from \cite[Prop. 4]{mifflin1977algorithm}, we know that $f\circ\varphi^{-1}$ is weakly upper semismooth (see \cite[Def. 2]{mifflin1977algorithm}). Therefore, it follows that
   $$\liminf_{i\rightarrow \infty} \hat{g}_i^{\rm T}\hat{v}\geq\limsup_{i\rightarrow \infty}\frac{f\circ\varphi^{-1}(\varphi(x)+t_i\hat{v})-f\circ\varphi^{-1}(\varphi(x))}{t_i}.$$
         It is easy to see that if $f\circ\varphi^{-1}$ is weakly upper semismooth then Definition \ref{def3.1} is satisfied. Therefore, a function that satisfies the semismoothness defined in \cite{ghahraei2011semismooth,malmir2022generalized} also satisfies Definition \ref{def3.1}.
         When $\mathcal{M}$ takes $\mathbb{R}$ (i.e., $\mathcal{M}=\mathbb{R}$), an example given in \cite[Sec. 3]{bihain1984optimization} satisfies Definition \ref{def3.1} but does not satisfy  \cite[Def. 2.1]{ghahraei2011semismooth}. In conclusion, the semismoothness defined in this paper is weaker than the one in \cite{ghahraei2011semismooth,malmir2022generalized}.

   
   
       \item [(b)] This definition does not depend on the selection of charts, which can be shown similarly according to \cite[Prop. 2.2]{ghahraei2011semismooth}. In fact, suppose that $f$ is semismooth at $x$, i.e., there exists a chart $(U, \varphi)$ at $x$ such that $f\circ\varphi^{-1}$ is semismooth at $\varphi(x)$. If there is another chart $(V, \psi)$ such that $x\in V$, then $U\cap V$ is a nonempty subset of $\mathcal{M}$. The restriction of the mapping $f\circ \psi$ on the open set $\psi(U\cap V)\subseteq \mathbb{R}^n$ can be expressed as
       $$(f\circ\psi^{-1})\vert_{\psi(U\cap V)}=(f\circ\varphi^{-1})\vert_{\varphi(U\cap V)}\circ(\varphi\circ\psi^{-1})\vert_{\psi(U\cap V)}.$$
      Because of the $C^{\infty}$ structure on $\mathcal{M}$, the map $(\varphi\circ\psi^{-1})\vert_{\psi(U\cap V)}$ is smooth. According to  \cite[Thm. 5]{mifflin1977semismooth}, we conclude that $f\circ\psi^{-1}$ is semismooth at $\psi(x)$.

      \item [(c)] In particular, if $\mathcal{M}=\mathbb{R}^n$,
      take $U=\mathbb{R}^n$ and $\varphi$ being an identity map,
      then Definition \ref{def3.1} coincides with the definition of semismoothness  in \cite[Sec. 3.3]{Kiwiel1985}.
   \end{itemize}
   \end{remark}

   The semismoothness assumption given by Definition \ref{def3.1} leads to an important consequence below, which is applied to prove the well-definedness of Procedure~\ref{Procedure:LS}.


   \begin{lemma}\label{semismooth1}
     Let $f$ be semismooth on $\mathcal{M}$ in the sense of Definition \ref{def3.1}, $R$ be a retraction, and $x \in\mathcal{M}$. There exist a real number $r > 0$ and a neighborhood $U$ of $x$ in $\mathcal{M}$ such that $R_x : B(0_x,r) \to U$ is a diffeomorphism. Moreover, for all $v \in T_x\mathcal{M}$ and sequences $\{g_i\}$ and $\{t_i\} \subseteq \mathbb{R}_+$ such that
$g_i \in\partial f(R_x(t_iv))$, $t_i \downarrow  t^*$, and $t^*v \in B(0_x, r)$, we have
       \begin{align}\label{semism2}
           &\limsup_{i\rightarrow \infty}\langle\mathcal{T}_{R_x(t^*v)\leftarrow R_x(t^iv)} (g_i),{\rm D}R_{x}(t^*v)[v]\rangle\geq\liminf_{i\rightarrow \infty}\frac{f (R_{x}(t_iv))-f(R_{x}(t^*v))}{t_i-t^*}.
       \end{align}
   
   \end{lemma}

   \begin{proof}
        First, for each $x\in\mathcal{M}$, there are real  number $r>0$ and a neighborhood $U$ of $x$ in $\mathcal{M}$ such that $R_x$ is a diffeomorphism on $B(0_x,r)$; see \cite[Prop. 10.20]{boumal2022intromanifolds}.
       In addition,
       since $f$ is semismooth at $R_{x}(t^*v)\in U$, it follows that for
       $\hat{g}_i\in\partial f\circ\varphi^{-1}(\varphi(R_{x}(t^*v))+(t_i-t^*){\rm D}\varphi\circ R_{x}(t^*v)[v])$ and $t_i\downarrow t^*$, we have
       \begin{equation}\label{semism1}
           \begin{aligned}
           & \limsup_{i\rightarrow \infty} \hat{g}_i^{\rm T}{\rm D}\varphi\circ R_{x}(t^*v)[v]\\
          &\geq\liminf_{i\rightarrow \infty}\frac{f\circ\varphi^{-1}(\varphi(R_{x}(t^*v))+(t_i-t^*){\rm D}\varphi\circ R_{x}(t^*v)[v])-f(R_{x}(t^*v))}{t_i-t^*}.
           \end{aligned}
       \end{equation}

       Let $E:\mathbb{R}^n \rightarrow T_x\mathcal{M}$, $(x_1,x_2,\cdots,x_n)\mapsto \sum_{i=1}^{n}x_iE_i$ be a linear bijection, where $\{E_i\}_{i=1}^{n}$ is an orthonormal basis on $T_x\mathcal{M}$.
       From this, we choose the chart as
       $$\varphi:= E^{-1}\circ R_x^{-1}: U\ \rightarrow \mathbb{R}^{n}. $$
      Then
      ${\rm D}\varphi\circ R_{x}(t^*v)[v]={\rm D}E^{-1}(t^*v)[v]=E^{-1}(v)$. Thus, we have
      \begin{align*}
         & \liminf_{i\rightarrow \infty}\frac{f\circ\varphi^{-1}(\varphi(R_{x}(t^*v))+(t_i-t^*){\rm D}\varphi\circ R_{x}(t^*v)[v])-f(R_{x}(t^*v))}{t_i-t^*}\\
      &=\liminf_{i\rightarrow \infty}\frac{f\circ R_x\circ E(E^{-1}(t^*v))+(t_i-t^*)E^{-1}(v))-f(R_{x}(t^*v))}{t_i-t^*}\\
      &=\liminf_{i\rightarrow \infty}\frac{f(R_x(t_iv))-f(R_{x}(t^*v))}{t_i-t^*},
      \end{align*}
      and $\hat{g}_i\in\partial f\circ\varphi^{-1}(\varphi(R_{x}(t_iv)))$.
   
      On the other hand, define $\hat{\xi}_y={\rm D}\varphi(y)[\xi_y]$ and $\hat{\eta}_y={\rm D}\varphi(y)[\eta_y]$ for any $\xi_y,\eta_y \in T_y\mathcal{M}$, then  \begin{equation}\label{matrix1}
          \langle\xi_y,\eta_y\rangle =\hat{\xi}_y^{\rm T}G_{\varphi(y)}\hat{\eta}_y,
      \end{equation}
      where $y=R_x(t^*v)\in U$ and  $G_{\varphi(y)}\in\mathbb{R}^{n\times n}$ represents the metric matrix with respect to the basis $E_y$.
   

   
   
       The left side of inequality (\ref{semism1}) is
      \begin{align*}
           & \limsup_{i\rightarrow \infty} \hat{g}_i^{\rm T}{\rm D}\varphi\circ R_{x}(t^*v)[v]\\
           & =\limsup_{i\rightarrow \infty} (G_{\varphi(R_x(t_iv))}{\rm D}\varphi(R_{x}(t_iv))[g_i])^{\rm T}{\rm D}\varphi\circ R_{x}(t^*v)[v]\\
            &= \limsup_{i\rightarrow \infty} ({\rm D}\varphi(R_{x}(t_iv))[g_i])^{\rm T}G_{\varphi(R_x(t_iv))}{\rm D}\varphi\circ R_{x}(t^*v)[v]\\
            &= \limsup_{i\rightarrow \infty} \langle({\rm D}\varphi(R_{x}(t_iv)))^{-1}{\rm D}\varphi(R_{x}(t_iv))[g_i], ({\rm D}\varphi(R_{x}(t_iv)))^{-1}{\rm D}\varphi\circ R_{x}(t^*v)[v]\rangle \\
            &= \limsup_{i\rightarrow \infty} \langle{g}_i, ({\rm D}\varphi(R_{x}(t_iv)))^{-1}{\rm D}\varphi\circ R_{x}(t^*v)[v]\rangle \\
            &=\limsup_{i\rightarrow \infty}\langle\mathcal{T}_{R_x(t^*v)\leftarrow R_x(t_iv)}{(g_i)},{\rm D}R_{x}(t^*v)[v]\rangle,
      \end{align*}
      where $g_i\in\partial f(R_x(t_iv))$ and $R_x(t^*v)\in U$.  The first equation holds since $\hat{g}_i=G_{\varphi(R_x(t_iv))}{\rm D}\varphi(R_{x}(t_iv))[g_i]$ according to \cite[Prop. 3.1]{yang2014optimality}. The second equation is true because of the symmetry of $G_{\varphi(R_x(t_iv))}$. The third equation
      follows from (\ref{matrix1}).
   
        Summarizing the above discussions, we obtain \eqref{semism2}.\qed
   \end{proof}

   \vspace{3mm}
   The following lemma, which relies on Assumption \ref{assum3} below, shows that the line-search procedure is well-defined. The proof is an extension of the Euclidean version in \cite[Lem. 2.2]{vlvcek2001globally}.
   There are two main differences:
   (a) the application of the result given in Lemma \ref{semismooth1}; and (b) the relation $(t^i-t^*)d_k=(x+t^id_k)-(x+t^*d_k)$ holds immediately in the Euclidean case, while in the Riemannian case, there may not be a known vector representing the relationship between $R_x(t^id_k)$ and $R_x(t^*d_k)$.

     \begin{assumption}\label{assum3}
       The vector transport $\mathcal{T}$ is isometric and satisfies the locking condition~(\ref{vertran2}).
   \end{assumption}
   
   
   
   \begin{lemma}\label{lemma3.2}
   Suppose that $f$ is semismooth and that Assumption \ref{assum3} holds. Then Procedure \ref{Procedure:LS} terminates in finite steps.
   \end{lemma}
   \begin{proof}
   	Supposed by contradiction that the procedure does not terminate.
    For the $i$th iteration, we use the notations $t^i$, $t_{\rm A}^i$, $t_{\rm U}^i$, $g^i$, $\delta^i$ to denote $t$, $t_{\rm A}$, $t_{\rm U}$, $g$, $\delta$ in the proceduce. For any $i$, we have
   	$$t_{\rm A}^i\leq t_{\rm A}^{i+1}\leq t_{\rm U}^{i+1}\leq {t_{\rm U}^i}\ \ {\rm and}\ \  {t_{\rm U}^{i+1}}-{t_{\rm A}^{i+1}}\leq(1-\kappa)(t_{\rm U}^i-{t_{\rm A}^i}).$$
    Since $ t^i\in  [t_{\rm A}^i,{t_{\rm U}^i}]$, there is $t^*\geq 0$ such that ${t_{\rm A}^i}\uparrow t^*$, ${t_{\rm U}^i}\downarrow t^*$, $t^i\rightarrow t^*$ as $i\rightarrow\infty$. 
    
     Define the set
   	$$ S=\{t\geq 0\ \vert\ f(R_{x_k}(t d_k))\leq f(x_k)-\theta_{\rm T} t w_k \}.$$Due to $\{t^i_{\rm A}\}\subseteq S$, ${t^i_{\rm A}}\rightarrow t^*$, and the continuity of $f\circ R_{x_k}$,
    we have
   	\begin{equation}\label{linelem1}
   		f(R_{x_k}(t^*d_k))\leq f(x_k)-\theta_{\rm T} t^*w_k,
   	\end{equation}
   	implying $t^*\in S$.
   Define $\mathcal I=\{i\ \vert\ t^i\notin S\}$. We prove that the set $\mathcal I$ is infinite. Otherwise, there exists $i_{\rm N}$ which is the greatest element in $\mathcal{I}$ such that $t^i\in S$ for any $i>i_{\rm N}$. Therefore we have $t^{i}_{\rm U}= t^{i_{\rm N}}_{\rm U}$ for all $i>i_{\rm N}$. Then $t^*=t^{i_{\rm N}}_{\rm U}\notin S$ which is a contradiction. For all $i\in \mathcal I$, we obtain
   	$$f(R_{x_k}(t^id_k))> f(x_k)-\theta_{\rm T} t^i w_k.\ \ $$
       According to Steps \ref{LS:3}-\ref{LS:7} of Procedure \ref{Procedure:LS}, we have $t^i=t_{\rm U}^{i+1}$. From the facts that $\mathcal{I}$ is an infinite set and $t_{\rm U}^i\downarrow t^*$, we know that $t_i>t^*$.
   	This together with (\ref{linelem1}) shows that
   	$$
   	-\theta_{\rm T} w_k<\frac{f(R_{x_k}(t^i d_k))-f(R_{x_k}(t^*d_k))}{t^i-t^*}
   	$$
   	for all $i\in \mathcal I$. Let $\mathcal{I}_0=\{i\in \mathcal{I}\ \vert\ R_{x_k}(t^i d_k)\in B(R_{x_k}(t^*d_k), {\rm Inj}_R(\mathcal{M}))\}$. Taking the limit inferior in $\mathcal{I}_0$, we obtain
   	\begin{equation}\label{linelem2}
   		\begin{split}
   			-\theta_{\rm T} w_k &\leq \liminf_{i\in \mathcal{I}_0}\frac{f(R_{x_k}(t^i d_k))-f(R_{x_k}(t^*d_k))}{t^i-t^*}\\
   			&\leq \limsup_{i\in \mathcal{I}_0}\langle\mathcal{T}_{R_{x_k}(t^*d_k)\leftarrow R_{x_k}(t^i d_k)}g^i,{\rm D}R_{x_k}(t^*d_k)[d_k]\rangle \\
                 &= \limsup_{i\in \mathcal{I}_0}\beta_{t^*d_k}\langle\mathcal{T}_{R_{x_k}(t^*d_k)\leftarrow R_{x_k}(t^i d_k)}(g^i),\mathcal{T}_{x_k\rightarrow R_{x_k}(t^*d_k)}d_k\rangle \\
   			&= \limsup_{i\in \mathcal{I}_0}\beta_{t^*d_k}\langle\mathcal{T}_{x_k\leftarrow R_{x_k}(t^*d_k)}\mathcal{T}_{R_{x_k}(t^*d_k)\leftarrow R_{x_k}(t^i d_k)}(g^i),d_k\rangle \\
   			&= \limsup_{i\in \mathcal{I}_0}\beta_{t^*d_k}\langle \mathcal{T}_{x_k\leftarrow R_{x_k}(t^i d_k)}(g^i),d_k\rangle,
   		\end{split}
   	\end{equation}
   	where $g^i\in \partial f(R_{x_k}(t^i d_k))$. The second inequality is established by Lemma \ref{semismooth1}, and the first and second equations hold by Assumption \ref{assum3}. The third equation is given by the boundedness of $\{g^i\}$. From Steps \ref{LS:11}-\ref{LS:13} of Procedure \ref{Procedure:LS}, the condition $(t^i-t_{\rm A})\|d_k\|<\theta$ remains true for sufficiently large $i$.
    Next, we will analyze two cases.
   	
   	First, if $t^*=0$, then $t^i\rightarrow 0$. By the continuity of $f$, the local boundedness of $\partial f$ and the isometric property of the vector transport $\mathcal T$, we have $\delta^i\rightarrow 0$ as $i\rightarrow\infty$. Since the line-search does not terminate, we obtain
   	$-\delta^i+\beta_{t^i d_k}\langle \mathcal{T}_{x_k\leftarrow R_{x_k}(t^i d_k)}(g^i),d_k\rangle< -\theta_{\rm R}w_k$ for all  large $i$. 
   	Therefore, by the continuity of $\beta$, it follows that
   \begin{align*}
     \limsup_{i\in \mathcal{I}_0}\beta_{t^i d}\langle \mathcal{T}_{x_k\leftarrow R_{x_k}(t^i d_k)}(g^i),d_k\rangle & = \limsup\limits_{i\in \mathcal {I}_0}\beta_{t^*d}\langle \mathcal{T}_{x_k\leftarrow R_{x_k}(t^i d_k)}(g^i),d_k\rangle\\
      &  \leq -\theta_{\rm R} w_k<-\theta_{\rm T} w_k,
   \end{align*}
   which is a contradiction with (\ref{linelem2}).
   	
   	Second, suppose that $t^*>0$. From (\ref{linelem1}), $\theta_{\rm L}<\theta_{\rm T}$ and the continuity of $f\circ R_{x_k}$, for sufficiently large $i$, we obtain
   	$$ f(R_{x_k}(t^i d_k))\leq f(x_k)-\theta_{\rm L}t^i w_k.$$
   Since the line-search does not terminate, it follows that, for sufficiently large $i$, $\delta^i\leq \theta_{\rm A}w_k$ from Steps \ref{LS:8}-\ref{LS:10} of Procedure \ref{Procedure:LS} and that
   	$$\beta_{t^i d_k}\langle\mathcal{T}_{x_k\leftarrow R_{x_k}(t^i d_k)}(g^i),d_k\rangle <-\theta_{\rm R} w_k+\delta^i\leq (\theta_{\rm A}-\theta_{\rm R})w_k< -\theta_{\rm T} w_k$$
   from Steps \ref{LS:11}-\ref{LS:13} of Procedure \ref{Procedure:LS}. This contradicts (\ref{linelem2}).\qed
   \end{proof}
   \section{Global convergence}\label{sec4}
   In this section, we establish the convergence of Algorithm \ref{Algo:RQNBM},
   i.e., if the number of serious iteration steps is finite, then the last serious iteration is stationary; otherwise, every accumulation point of the serious iteration sequence is stationary. In the convergence analysis, the tolerance is set to be $\varepsilon= 0$.
    
  The convergence analysis relies on Assumption \ref{assum3}, along with the following two additional assumptions.
  Assumption~\ref{assum4} is introduced to ensure that $R^{-1}_{x_k}(y_{k+1})$ exists and is unique in a neighborhood of $x_k$. 
   
   
   \begin{assumption}\label{assum4}
       The injectivity radius of $\mathcal{M}$ is greater than $0$, i.e., ${\rm Inj}_R(\mathcal{M}) >0$.
   \end{assumption}
   
   \begin{assumption}\label{assum1}
   The sublevel set $\mathcal{N}= \{x \in \mathcal{M} : f(x) \leq f(x_1)\}$ is bounded.
   \end{assumption}
   
   The following three lemmas present some necessary results for our analysis. In particular, we use $\|R_{x_{k+1}}^{-1}(y_{k+1})\|$ instead of $\|y_{k+1}-x_{k+1}\|$ in the Euclidean setting, because $\|y_{k+1}-x_{k+1}\|$ is not defined in the Riemannian setting.
   \begin{lemma}\label{lemma4.1} 
   If Assumptions~\ref{assum3} and \ref{assum4} hold, then at the $k$th iteration of Algorithm \ref{Algo:RQNBM}, we have
   	\begin{gather}
   		w_k=\langle\tilde{g}_k,\mathcal{H}_k\tilde{g}_k\rangle
   		+2\tilde{\alpha}_k\geq \rho\|\tilde{g}_k\|^2, \label{wk}\\
   		\alpha_{k+1}\geq \gamma (\|R_{x_{k+1}}^{-1}(y_{k+1})\|)^\nu.\label{alphak}
   	\end{gather}
   \end{lemma}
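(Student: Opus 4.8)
The plan is to handle the two displayed relations separately. I would obtain the identity and inequality in (\ref{wk}) by tracing the simultaneous assignments of $w_{k+1}$ and $\mathcal{H}_{k+1}$ in Steps \ref{Step:31}--\ref{Step:36} of Algorithm \ref{Algo:RQNBM}, using the positive definiteness supplied by Lemma \ref{lemma3}; and I would obtain (\ref{alphak}) by splitting into serious and null steps and reading off the output of Procedure \ref{Procedure:LS}.

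For the equality in (\ref{wk}) I would argue by induction on $k$. The base case holds by the initialization in Step \ref{Step:1} (consistent precisely when $\mathcal{H}_1$ is the identity, $\tilde\alpha_1=0$). For the inductive step, in the no-correction branch (Step \ref{Step:no-correction}) one has $w_{k+1}=\check w_{k+1}$ and $\mathcal H_{k+1}=\check{\mathcal H}_{k+1}$, so the claimed identity is exactly the definition of $\check w_{k+1}$ in Step \ref{Step:30}; in the correction branch (Step \ref{Step: correction}) one has $w_{k+1}=\check w_{k+1}+\rho\|\tilde g_{k+1}\|^2$ and $\mathcal H_{k+1}=\check{\mathcal H}_{k+1}+\rho\,{\rm id}_{x_{k+1}}$, and substituting $\check w_{k+1}$ together with $\rho\|\tilde g_{k+1}\|^2=\rho\langle\tilde g_{k+1},{\rm id}_{x_{k+1}}\tilde g_{k+1}\rangle$ collapses to $\langle\tilde g_{k+1},\mathcal H_{k+1}\tilde g_{k+1}\rangle+2\tilde\alpha_{k+1}$. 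The inequality then follows along the same two branches once I record that $\tilde\alpha_k\geq 0$ for all $k$: this holds by induction from $\tilde\alpha_1=0$, from the reset $\tilde\alpha_{k+1}=0$ after a serious step (Step \ref{Step:20}), and from $\tilde\alpha_{k+1}=\lambda_{k,2}\alpha_{k+1}+\lambda_{k,3}\tilde\alpha_k$ in (\ref{aggre2}), where $\lambda_{k,2},\lambda_{k,3}\geq 0$ by the simplex constraint in (\ref{mainsubprob}) and $\alpha_{k+1}\geq 0$ by its definition as a maximum. In the no-correction branch the negation of the test in Step \ref{Step:31} forces $\check w_k\geq\rho\|\tilde g_k\|^2$, whence $w_k=\check w_k\geq\rho\|\tilde g_k\|^2$; in the correction branch $w_k=\langle\tilde g_k,\check{\mathcal H}_k\tilde g_k\rangle+2\tilde\alpha_k+\rho\|\tilde g_k\|^2\geq\rho\|\tilde g_k\|^2$, since $\check{\mathcal H}_k$ is positive definite by Lemma \ref{lemma3} and $\tilde\alpha_k\geq 0$.

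For (\ref{alphak}) I would use the structure of Procedure \ref{Procedure:LS}. After a serious step one has $t_L^k=t_R^k>0$, so $x_{k+1}=R_{x_k}(t_L^kd_k)=y_{k+1}$, giving $R_{x_{k+1}}^{-1}(y_{k+1})=0_{x_{k+1}}$; together with $\alpha_{k+1}=0$ (Steps \ref{LS:8}--\ref{LS:10} of the procedure) this makes (\ref{alphak}) the trivial equality $0=0$. After a null step one has $t_L^k=0$, so $x_{k+1}=x_k$ and $\alpha_{k+1}=\delta_{k+1}$. The key geometric identification is that the choice of $t^k_I$ forces $t_R^k\|d_k\|\leq\mu_0<{\rm Inj}(\mathcal M)$, so $t_R^kd_k$ lies in the ball on which $R_{x_k}$ is injective and hence $R_{x_{k+1}}^{-1}(y_{k+1})=R_{x_k}^{-1}(R_{x_k}(t_R^kd_k))=t_R^kd_k$, giving $\|R_{x_{k+1}}^{-1}(y_{k+1})\|=t_R^k\|d_k\|$. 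Reading off the second argument of the maximum defining $\delta$ in Step \ref{LS:Step 2} (equivalently (\ref{deltak})) then yields $\alpha_{k+1}=\delta_{k+1}\geq\gamma(t_R^k\|d_k\|)^\nu=\gamma(\|R_{x_{k+1}}^{-1}(y_{k+1})\|)^\nu$.

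The only step demanding genuine care, rather than bookkeeping, is the identification $R_{x_{k+1}}^{-1}(y_{k+1})=t_R^kd_k$ in the null-step case, which is exactly what allows the Euclidean quantity $\|y_{k+1}-x_{k+1}\|$ to be replaced by its Riemannian surrogate and relies on the injectivity-radius bound built into the stepsize control; everything else is a direct reading of the update rules combined with the positive definiteness from Lemma \ref{lemma3}. The secondary point worth verifying is the base case of the equality in (\ref{wk}), which is consistent with Step \ref{Step:1} precisely when $\mathcal H_1$ is taken to be the identity.
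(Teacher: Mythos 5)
Your proof is correct and takes essentially the same route as the paper's: a direct case analysis on the correction/no-correction branches of Steps \ref{Step:31}--\ref{Step:36} for (\ref{wk}), and a serious/null split reading off Procedure \ref{Procedure:LS} together with $x_{k+1}=x_k$ and (\ref{deltak}) for (\ref{alphak}). The extra details you supply are points the paper leaves implicit but uses anyway --- the nonnegativity of $\tilde{\alpha}_k$, the injectivity-radius identification $R_{x_{k+1}}^{-1}(y_{k+1})=t_R^kd_k$, and the caveat that the $k=1$ equality in (\ref{wk}) is consistent with Step \ref{Step:1} only when $\langle\tilde{g}_1,\mathcal{H}_1\tilde{g}_1\rangle=\|\tilde{g}_1\|^2$ (e.g. $\mathcal{H}_1={\rm id}$, as in the paper's experiments) --- and all of them are sound.
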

   \begin{proof}
   From Steps \ref{Step:30}-\ref{Step:36} of Algorithm \ref{Algo:RQNBM} and the positive-definiteness of $\check{\mathcal{H}}_{k}$, we can see $\check{w}_{k} =\langle\tilde{g}_{k},\check{\mathcal{H}}_{k}\tilde{g}_{k}\rangle+2\tilde{\alpha}_{k}\geq 0.$ If $\check{w}_{k}<\rho\|\tilde{g}_{k}\|^2$ or $\check{w}_{k}\geq\rho\|\tilde{g}_{k}\|^2$ and $i_C=i_U=1$ , then
   $$w_{k}=\check{w}_{k}+\rho\|\tilde{g}_{k}\|^2 = \langle\tilde{g}_{k},(\check{\mathcal{H}}_{k}+ \rho {\rm {id}}_{T_{x_k}\mathcal{M}})\tilde{g}_{k}\rangle+2\tilde{\alpha}_{k}=\langle\tilde{g}_k,\mathcal{H}_k\tilde{g}_k\rangle
   		+2\tilde{\alpha}_k$$
     and $w_{k}\geq\rho \|\tilde{g}_k\|^2$. Otherwise,
   $$w_{k}=\check{w}_{k}= \langle\tilde{g}_{k},\check{\mathcal{H}}_{k}\tilde{g}_{k}\rangle+2\tilde{\alpha}_{k}=\langle\tilde{g}_k,\mathcal{H}_k\tilde{g}_k\rangle
   		+2\tilde{\alpha}_k\geq \rho \|\tilde{g}_k\|^2.$$
   Thus,  (\ref{wk}) holds.
   
   For serious steps, taking into account that $\|R_{x_{k+1}}^{-1}(y_{k+1})\|=0$ and $\alpha_{k+1}=0$, (\ref{alphak}) holds trivially. For null steps,  (\ref{alphak}) follows from  (\ref{deltak}) and $x_k=x_{k+1}$. \qed
   \end{proof}
   
   \begin{lemma}\label{lambda1}
    Under Assumptions~\ref{assum3} and \ref{assum4}, if $k$ is the current iteration index of Algorithm \ref{Algo:RQNBM},  there exist scalars $\Lambda^k_i\geq 0, i=m,\ldots, k$ with $\sum_{i=m}^{k}\Lambda^k_i=1$ such that
    \begin{equation}\label{sum1}
   (\tilde{g}_k,\tilde{\alpha}_k)=\sum_{i=m}^{k}\Lambda^k_i(\hat{\mathcal{T}}_{x_{k}\leftarrow y_{i}}{(g_i)},\alpha_i).
    \end{equation}
   Moreover,  defining $\tilde{\sigma}_k=\sum_{i=m}^{k}\Lambda_i^k\|R_{x_{k}}^{-1}(y_{i})\|$, we have $\tilde{\alpha}_k\geq\gamma\tilde{\sigma}_k^\nu$.
   	
   \end{lemma}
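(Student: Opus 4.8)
The plan is to argue by induction on the iteration index $k$ within a single cycle of consecutive null steps, i.e.\ over the range $i=m,\dots,k$ where $m$ is the index fixed by the most recent serious step (or the initialization), the induction being restarted afresh each time a serious step resets $m$. The structural fact I would record first is that throughout such a cycle the stability center is frozen, $x_m=x_{m+1}=\cdots=x_k$, so that $x_{k+1}=x_k$ at every null step. This has two consequences I would note at the outset. First, the transports coincide, $\hat{\mathcal{T}}_{x_{k+1}\leftarrow y_i}=\hat{\mathcal{T}}_{x_k\leftarrow y_i}$ for every $i$, since they map into the same tangent space $T_{x_k}\mathcal M$. Second, because $y_m=x_m=x_k$ we have $R_{x_k}^{-1}(y_m)=0_{x_k}$, whence $\hat{\mathcal{T}}_{x_k\leftarrow y_m}g_m=g_m$ (using $\beta_{0_x}^{-1}=1$ and $\mathcal{T}_{0_x}=\mathrm{id}$) and $\alpha_m=0$ (a serious step returns $\alpha=0$ from Procedure~\ref{Procedure:LS}, and $\alpha_1=0$ by initialization).

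For the base case $k=m$, the aggregation reset gives $\tilde g_m=g_m$ and $\tilde\alpha_m=0$, so $\Lambda^m_m=1$ yields $(\tilde g_m,\tilde\alpha_m)=\Lambda^m_m(\hat{\mathcal{T}}_{x_m\leftarrow y_m}g_m,\alpha_m)$ by the remarks above. For the inductive step I would assume $(\tilde g_k,\tilde\alpha_k)=\sum_{i=m}^k\Lambda^k_i(\hat{\mathcal{T}}_{x_k\leftarrow y_i}g_i,\alpha_i)$ and substitute it into the null-step updates (\ref{aggre1})--(\ref{aggre2}). Writing $g_m=\hat{\mathcal{T}}_{x_k\leftarrow y_m}g_m$ and collecting terms, I define
\[
\Lambda^{k+1}_i=\begin{cases}\lambda_{k,1}+\lambda_{k,3}\Lambda^k_m,& i=m,\\[2pt] \lambda_{k,3}\Lambda^k_i,& m<i\le k,\\[2pt] \lambda_{k,2},& i=k+1,\end{cases}
\]
and then verify nonnegativity and $\sum_{i=m}^{k+1}\Lambda^{k+1}_i=1$, using $\sum_{i=m}^k\Lambda^k_i=1$ together with the simplex constraint $\sum_{j}\lambda_{k,j}=1$ from subproblem (\ref{mainsubprob}). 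Since $x_{k+1}=x_k$ the transports carry over unchanged, giving the claimed representation of $(\tilde g_{k+1},\tilde\alpha_{k+1})$; the vanishing of $\alpha_m$ is precisely what lets the $\lambda_{k,1}$ mass be absorbed into the index $m$ without corrupting the $\tilde\alpha$ component.

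For the final inequality I would combine the representation of $\tilde\alpha_{k+1}$ with Lemma~\ref{lemma4.1}, inequality (\ref{alphak}): because all indices in the cycle share the stability center $x_{k+1}$, we obtain $\alpha_i\ge\gamma\|R_{x_{k+1}}^{-1}(y_i)\|^\nu$ for each $i=m,\dots,k+1$ (trivially for $i=m$). Hence $\tilde\alpha_{k+1}=\sum_i\Lambda^{k+1}_i\alpha_i\ge\gamma\sum_i\Lambda^{k+1}_i\|R_{x_{k+1}}^{-1}(y_i)\|^\nu$, and applying Jensen's inequality to the convex function $t\mapsto t^\nu$ (valid for $\nu\ge1$) over the convex weights $\Lambda^{k+1}_i$ gives $\sum_i\Lambda^{k+1}_i\|R_{x_{k+1}}^{-1}(y_i)\|^\nu\ge\big(\sum_i\Lambda^{k+1}_i\|R_{x_{k+1}}^{-1}(y_i)\|\big)^\nu=\tilde\sigma_{k+1}^\nu$, which is exactly $\tilde\alpha_{k+1}\ge\gamma\tilde\sigma_{k+1}^\nu$.

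I expect the main obstacle to be the careful bookkeeping of the vector transports rather than any deep estimate. In the Euclidean case the identity $\hat{\mathcal{T}}_{x_k\leftarrow y_i}=\mathrm{id}$ makes the aggregation automatic, whereas here I must confirm that the frozen-center property genuinely collapses all the transports appearing across one cycle into maps into the common tangent space $T_{x_k}\mathcal M$, and that the $\beta^{-1}$ normalization in $\hat{\mathcal T}_{x_k\leftarrow y_m}$ is harmless at the trivial transport. Once this is settled, both assertions reduce to the convex-combination algebra and Jensen's inequality sketched above.
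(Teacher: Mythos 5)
Your proposal is correct and follows essentially the same route as the paper's proof: induction starting from the base case $k=m$ with the identical coefficient recursion $\Lambda^{k+1}_m=\lambda_{k,1}+\lambda_{k,3}\Lambda^k_m$, $\Lambda^{k+1}_i=\lambda_{k,3}\Lambda^k_i$, $\Lambda^{k+1}_{k+1}=\lambda_{k,2}$, followed by convexity of $z\mapsto\gamma z^\nu$ (Jensen) for the inequality $\tilde{\alpha}_k\geq\gamma\tilde{\sigma}_k^\nu$. Your explicit bookkeeping of the frozen stability center, the identification $\hat{\mathcal{T}}_{x_k\leftarrow y_m}g_m=g_m$ via $\beta_{0_x}^{-1}=1$, and $\alpha_m=0$ merely spells out what the paper uses implicitly when it writes $\lambda_{k,1}(g_m,0)$ in the inductive step.
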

   \begin{proof}
   From the definition of index $m$, it follows that $x_i=x_m$ for $i=m,\ldots, k$. We prove \eqref{sum1} by induction.
   	
   For $k=m$, we know that $x_m=y_m$, $\tilde{g}_m=g_m$, $\tilde{\alpha}_m=\alpha_m=0$. Thus by setting  $\Lambda_m^m=1$, we obtain (\ref{sum1}). Next we assume that (\ref{sum1}) holds for $k~(\geq m)$, and will show that  (\ref{sum1}) is also true for $k+1$.
   Define	$\Lambda_m^{k+1}=\lambda_{k}^{(1)}+\lambda_{k}^{(3)}\Lambda_{m}^k$,
   	$\Lambda_i^{k+1}=\lambda_{k}^{(3)}\Lambda_{i}^k,\ \
   	\Lambda_{k+1}^{k+1}=\lambda_{k}^{(2)},$
   for $i=m+1,\ldots,k$. Obviously, $\Lambda_i^{k+1}\geq 0$ for  all $i=m,\ldots,k+1$ and $$\sum\limits_{i=m}^{k+1}\Lambda_i^{k+1}=\lambda_{k}^{(1)}+\lambda_{k}^{(3)}(\Lambda_{m}^k+\sum\limits_{i=m+1}^{k}\Lambda_{i}^k)+\lambda_{k}^{(2)}
   	=1.$$
   This together with (\ref{aggre1}) and (\ref{aggre2}) shows that
   	\begin{equation*}
   		\begin{split}
   			(\tilde{g}_{k+1},\tilde{\alpha}_{k+1})&=\lambda_{k}^{(1)}(g_m,0)+\lambda_{k}^{(2)}
   			(\hat{\mathcal{T}}_{x_{k+1}\leftarrow y_{k+1}}(g_{k+1}),\alpha_{k+1})+	\sum\limits_{i=m}^{k}\lambda_{k}^{(3)}\Lambda_{i}^k(\hat{\mathcal{T}}_{x_{k}\leftarrow y_{i}}(g_i),\alpha_i)  \\
   			& =\sum\limits_{i=m}^{k+1}\Lambda_{i}^{k+1}(\hat{\mathcal{T}}_{x_{k+1}\leftarrow y_{i}}(g_i),\alpha_i),
   		\end{split}
   	\end{equation*}
   	which implies that (\ref{sum1}) holds by replacing $k$ with $k+1$.
   
   	Moreover, since $x_i=x_k$ for $i=m,\ldots, k$, we have
   	$\tilde{\sigma}_k=\sum_{i=m}^{k}\Lambda_i^k\|R_{x_{i}}^{-1}(y_{i})\|.$
   	This along with (\ref{alphak}) and (\ref{sum1}) shows that
   	\begin{equation*}
   		\gamma\tilde{\sigma}_k^\nu= \gamma\left(\sum_{i=m}^{k}\Lambda_i^k\|R_{x_{i}}^{-1}(y_{i})\|\right)^\nu \leq \sum_{i=m}^{k}\Lambda_i^k\gamma (\|R_{x_{i}}^{-1}(y_{i})\|)^\nu \leq \sum_{i=m}^{k}\Lambda_i^k\alpha_i = \tilde{\alpha}_k,
   	\end{equation*}
   	where the first inequality is due to the convexity of the function $z\ \mapsto\ \gamma z^\nu$ on $\mathbb{R}_+$
   	for $\gamma>0$ and $w\geq 1$. \qed
   \end{proof}

   \begin{lemma}\label{station1}
   	Suppose that Assumption~\ref{assum4} holds. Given $\bar{x}\in \mathcal{M}$ and a nonempty finite index set $\bar{\mathcal{I}}$, if there exist ${\bar q}$, ${\bar g}_i$, ${\bar y}_i$ and scalars ${\bar \Lambda}_i\geq 0$, for $i\in \bar{\mathcal{I}}$ with $\sum_{i\in \bar{\mathcal{I}}}{\bar \Lambda}_i=1$, such that
   	\begin{equation}\label{station2}
   		({\bar q},0)= \sum_{i\in\bar{\mathcal{I}}}{\bar \Lambda}_i(\hat{\mathcal{T}}_{ \bar x\leftarrow {\bar y}_{i}}({\bar g}_i), \|R_{\bar x}^{-1}(\bar y_{i})\|),
   	\end{equation}
   then ${\bar q}\in \partial f({\bar x})$, where ${\bar g}_i\in\partial f({\bar y}_i)$.
   \end{lemma}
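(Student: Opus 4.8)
The plan is to use the scalar (second) component of the vector identity \eqref{station2} to pin down the active trial points, and then to recognize the first component as a convex combination of subgradients all sitting inside a single subdifferential.

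First I would exploit the equation $0 = \sum_{i\in\bar{\mathcal I}} \bar\Lambda_i \|R_{\bar x}^{-1}(\bar y_i)\|$. Since every $\bar\Lambda_i \geq 0$ and every norm is nonnegative, each summand must vanish, so $\|R_{\bar x}^{-1}(\bar y_i)\| = 0$ for every index with $\bar\Lambda_i > 0$. Collecting these into $\bar{\mathcal I}_+ := \{i\in\bar{\mathcal I}:\bar\Lambda_i>0\}$ --- which is nonempty because $\sum_i\bar\Lambda_i = 1$ --- gives $R_{\bar x}^{-1}(\bar y_i) = 0_{\bar x}$ for $i\in\bar{\mathcal I}_+$. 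Invoking the retraction property $R_{\bar x}(0_{\bar x})=\bar x$ together with the local injectivity of $R_{\bar x}$ then forces $\bar y_i=\bar x$ for all such $i$.

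Next I would simplify the transports on $\bar{\mathcal I}_+$. For $i\in\bar{\mathcal I}_+$ the tangent vector defining the transport is $\eta_{\bar x}=R_{\bar x}^{-1}(\bar y_i)=0_{\bar x}$, so by the convention $\beta_{0_{\bar x}}^{-1}=1$ and the vector-transport identity $\mathcal T_{0_{\bar x}}(\xi)=\xi$ of Definition \ref{defVT}(ii), the operator collapses to $\hat{\mathcal T}_{\bar x\leftarrow\bar y_i}=\beta_{0_{\bar x}}^{-1}\mathcal T_{0_{\bar x}}^{-1}={\rm id}_{\bar x}$. Consequently $\hat{\mathcal T}_{\bar x\leftarrow\bar y_i}\bar g_i=\bar g_i$, and since $\bar g_i\in\partial f(\bar y_i)=\partial f(\bar x)$, every active term already belongs to $\partial f(\bar x)$.

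Finally I would read off the first component of \eqref{station2}, namely $\bar q=\sum_{i\in\bar{\mathcal I}_+}\bar\Lambda_i\bar g_i$, which is a convex combination (the weights sum to one) of points of $\partial f(\bar x)$; the convexity of the Riemannian Clarke subdifferential guaranteed by Theorem \ref{nonsmo-theo}(i) then yields $\bar q\in\partial f(\bar x)$. I expect the only delicate point to be the very first step: the collapse of the scalar sum relies essentially on the sign structure (nonnegative weights times nonnegative norms), and on translating $\|R_{\bar x}^{-1}(\bar y_i)\|=0$ back into $\bar y_i=\bar x$ via injectivity of the retraction. Everything afterwards is a mechanical unwinding of the definitions of $\hat{\mathcal T}$ and $\partial f(\bar x)$, together with convexity.
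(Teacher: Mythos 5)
Your proposal is correct and follows essentially the same route as the paper's own proof: identify the positive-weight index set $\bar{\mathcal{I}}_{+}$, use the nonnegativity of the scalar component to conclude $\bar y_i=\bar x$ (hence $\bar g_i\in\partial f(\bar x)$) for $i\in\bar{\mathcal{I}}_{+}$, observe that $\hat{\mathcal{T}}_{\bar x\leftarrow \bar y_i}$ reduces to the identity via $\beta_{0_{\bar x}}^{-1}=1$ and $\mathcal{T}_{0_{\bar x}}(\xi)=\xi$, and finish by convexity of $\partial f(\bar x)$ from Theorem \ref{nonsmo-theo}. Your write-up is in fact slightly more explicit than the paper's at the two points it glosses over (why each summand of the scalar component vanishes, and why $\|R_{\bar x}^{-1}(\bar y_i)\|=0$ forces $\bar y_i=\bar x$), but the argument is the same.
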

   \begin{proof} 
Define $\bar{\mathcal{I}}_{+}=\{i\in\bar{\mathcal{I}}:\ {\bar \Lambda}_i>0\}$.
   	From (\ref{station2}), it follows that ${\bar y}_i={\bar x}\ \ {\rm and}\ \  {\bar g}_i\in \partial f({\bar x})$, ${\rm for\ all}\  i\in \bar{\mathcal{I}}_{+}$. 
    Thus, we obtain
   	$\beta_{R_{\bar x}^{-1}(\bar y_{i})} = \beta_{0_{\bar x}} = 1$, which implies that $\hat{\mathcal{T}}_{ \bar x\leftarrow {\bar y}_{i}}$, $i\in \bar{\mathcal{I}}_{+}$ are identity mappings.
    Therefore,
     $$ {\bar q}= \sum_{i\in \bar{\mathcal{I}}_{+}}{\bar \Lambda}_i{\bar g}_i,\ \hbox{with}~ {\bar \Lambda}_i>0, \  i\in \bar{\mathcal{I}}_{+}~\hbox{and}  \ \sum_{i\in \bar{\mathcal{I}}_{+}}{\bar \Lambda}_i=1.$$ 
     Combining this with the convexity of $\partial f({\bar x})$ (Theorem \ref{nonsmo-theo}) implies that ${\bar q}\in \partial f({\bar x})$. 
     
     \qed
    


   
   \end{proof}
   
   Based on the above lemmas, we prove in the following theorem that, if Algorithm \ref{Algo:RQNBM} stops finitely at iteration $k$ in Step \ref{Step:stoping}, then the current stability center $x_k$ is stationary for the objective function $f$.
   
   \begin{theorem}\label{Thm:finite-stop}
   	Suppose that Assumptions~\ref{assum3} and \ref{assum4} hold and that Algorithm \ref{Algo:RQNBM} terminates finitely at the $k$th iteration. Then $x_{k}$ is a stationary point of $f$, i. e., we have $0_{x_{k}}\in \partial f(x_{k})$.	
   \end{theorem}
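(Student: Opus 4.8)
The plan is to read off from the stopping rule that both the aggregate subgradient and the aggregate locality measure vanish, and then to feed the convex representation of Lemma \ref{lambda1} into Lemma \ref{station1}. First I would note that termination at Step \ref{Step:stoping} in the convergence analysis, where $\varepsilon=0$, means $w_k\leq 0$. By Lemma \ref{lemma4.1} we have $w_k=\langle\tilde{g}_k,\mathcal{H}_k\tilde{g}_k\rangle+2\tilde{\alpha}_k\geq\rho\|\tilde{g}_k\|^2$. Since $\mathcal{H}_k$ is positive definite (Lemma \ref{lemma3}) and $\tilde{\alpha}_k\geq\gamma\tilde{\sigma}_k^\nu\geq 0$ (Lemma \ref{lambda1}), the quantity $w_k$ is a sum of two nonnegative terms, so $w_k=0$ forces simultaneously $\langle\tilde{g}_k,\mathcal{H}_k\tilde{g}_k\rangle=0$ and $\tilde{\alpha}_k=0$. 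Positive definiteness then yields $\tilde{g}_k=0_{x_k}$, while the bound $\tilde{\alpha}_k\geq\gamma\tilde{\sigma}_k^\nu$ with $\gamma>0$ and $\nu\geq 1$ forces $\tilde{\sigma}_k=\sum_{i=m}^{k}\Lambda_i^k\|R_{x_k}^{-1}(y_i)\|=0$.

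Next I would invoke the convex representation supplied by Lemma \ref{lambda1}, namely $(\tilde{g}_k,\tilde{\alpha}_k)=\sum_{i=m}^{k}\Lambda_i^k(\hat{\mathcal{T}}_{x_k\leftarrow y_i}g_i,\alpha_i)$ with $\Lambda_i^k\geq 0$ and $\sum_{i=m}^{k}\Lambda_i^k=1$, where $g_i\in\partial f(y_i)$. Setting $\bar{x}=x_k$, $\bar{\mathcal{I}}=\{m,\dots,k\}$, $\bar{q}=\tilde{g}_k=0_{x_k}$, $\bar{g}_i=g_i$, $\bar{y}_i=y_i$ and $\bar{\Lambda}_i=\Lambda_i^k$, the first coordinate of the hypothesis \eqref{station2} of Lemma \ref{station1} is exactly the representation of $\tilde{g}_k$, while the second coordinate reads $\sum_{i=m}^{k}\Lambda_i^k\|R_{x_k}^{-1}(y_i)\|=\tilde{\sigma}_k=0$, which was just established. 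Thus all hypotheses of Lemma \ref{station1} are met, and its conclusion delivers $0_{x_k}=\bar{q}\in\partial f(x_k)$; that is, $x_k$ is stationary.

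The argument involves no genuinely hard estimate and is essentially a clean synthesis of the three preceding lemmas; the only point requiring care is the bookkeeping ensuring that the representation produced by Lemma \ref{lambda1} aligns coordinatewise with the two-component identity \eqref{station2} demanded by Lemma \ref{station1}. In particular, the step I would check most carefully is that the vanishing of the aggregate locality measure $\tilde{\alpha}_k$ propagates, through the locality bound $\tilde{\alpha}_k\geq\gamma\tilde{\sigma}_k^\nu$, to the vanishing of the weighted sum $\tilde{\sigma}_k$ of the displacements $\|R_{x_k}^{-1}(y_i)\|$, since it is precisely this second coordinate that Lemma \ref{station1} exploits to force $\bar{y}_i=\bar{x}$ (and hence $\hat{\mathcal{T}}_{x_k\leftarrow y_i}$ to be the identity and $g_i\in\partial f(x_k)$) on the active index set, after which convexity of $\partial f(x_k)$ closes the argument.
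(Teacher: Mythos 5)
Your proposal is correct and follows essentially the same route as the paper's own proof: termination gives $w_k=0$, which via Lemma \ref{lemma4.1}, Lemma \ref{lemma3} and the bound $\tilde{\alpha}_k\geq\gamma\tilde{\sigma}_k^{\nu}$ of Lemma \ref{lambda1} forces $\tilde{g}_k=0_{x_k}$ and $\tilde{\sigma}_k=0$, after which Lemma \ref{station1} is applied with exactly the same choice of data $\bar{x}=x_k$, $\bar{q}=0_{x_k}$, $\bar{\Lambda}_i=\Lambda_i^k$, $\bar{\mathcal{I}}=\{m,\ldots,k\}$. Your explicit check that the second coordinate fed into Lemma \ref{station1} is the displacement sum $\tilde{\sigma}_k$ (rather than $\tilde{\alpha}_k$) makes precise a step the paper leaves implicit, but it is the same argument.
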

   
   \begin{proof}
   	If Algorithm \ref{Algo:RQNBM} terminates at the $k$th iteration, then $w_{k}=0$.
   	From (\ref{wk})  and Lemma \ref{lambda1}, we have
   	$\tilde{g}_{k}=0_{x_k}$, $\tilde{\alpha}_{k}=\tilde{\sigma}_{k}=0$ and
   $(0_{x_k},0)=\sum_{i=m}^{k}\Lambda^k_i(\hat{\mathcal{T}}_{x_{k}\leftarrow y_{i}}(g_i),\|R_{x_{k}}^{-1}(y_{i})\|).$
   It follows from ${\bar x}=x_{k}, \ \ {\bar q} =0_{x_k},\ \
   			{\bar g}_i=g_i,\ \   {\bar y}_i=y_i,\ \  {\bar \Lambda}_i=\Lambda_i^k$ and $\bar{\mathcal{I}} = \{m,\ldots,k\}$ in Lemma~\ref{station1} that $0_{x_k} \in \partial f({x_k})$. \qed
   \end{proof}

   \vspace{3mm}
   In what follows, we assume that Algorithm \ref{Algo:RQNBM} does not terminate.
   Two cases are considered separately: (a) the number of serious steps is finite followed by an infinite number of null steps; and (b) the number of serious steps is infinite.
   
   Now we consider the former case and first prove the following fundamental lemma. The proof of the lemma depends heavily on the fact that the vector transport $\mathcal{T}$ and $\beta$ are continuous.
   \begin{lemma}\label{lemma4.5}
   		
   	 If  Assumptions~\ref{assum3}-\ref{assum1} hold and there exist a point $x^*$ and an infinite index set $\mathcal{K}$ such that the sequence $\{x_k\}_\mathcal{K}$ converges to $x^*$ and $\{w_k\}_\mathcal{K}$ converges to $0$, then $0_{x^*}\in \partial f(x^*)$.
   \end{lemma}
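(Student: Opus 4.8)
The plan is to combine the reduced representation of Lemma~\ref{lambda1} with a limiting argument that transports everything into $T_{x^*}\mathcal{M}$ and then invokes Lemma~\ref{station1}. First I would extract the quantitative consequences of $w_k\to0$ along $\mathcal{K}$: since $\mathcal{H}_k$ is positive definite (Lemma~\ref{lemma3}), both summands in $w_k=\langle\tilde g_k,\mathcal{H}_k\tilde g_k\rangle+2\tilde\alpha_k$ are nonnegative, so $\tilde\alpha_k\to0$ along $\mathcal{K}$, while $w_k\ge\rho\|\tilde g_k\|^2$ from~\eqref{wk} forces $\|\tilde g_k\|\to0$. By Lemma~\ref{lambda1} we have $\tilde\alpha_k\ge\gamma\tilde\sigma_k^\nu$, hence $\tilde\sigma_k\to0$ as well.

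Next, starting from the identity $(\tilde g_k,\tilde\sigma_k)=\sum_{i=m}^{k}\Lambda_i^k(\hat{\mathcal{T}}_{x_k\leftarrow y_i}g_i,\|R_{x_k}^{-1}(y_i)\|)$, which follows from~\eqref{sum1} together with $x_i=x_k$ for $m\le i\le k$, I would apply Carath\'eodory's theorem in $T_{x_k}\mathcal{M}\times\mathbb{R}\cong\mathbb{R}^{n+1}$ to rewrite this convex combination, for each $k\in\mathcal{K}$, using at most $n+2$ of the indices. Passing to a subsequence of $\mathcal{K}$, I can fix the number $p\le n+2$ of active terms, label them $j_1^k,\dots,j_p^k$, and arrange that the weights converge, $\hat\Lambda_{j_l^k}^k\to\bar\Lambda_l\ge0$ with $\sum_l\bar\Lambda_l=1$. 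For each track $l$ with $\bar\Lambda_l>0$, nonnegativity of the terms and $\tilde\sigma_k\to0$ give $\|R_{x_k}^{-1}(y_{j_l^k})\|\to0$, so $y_{j_l^k}\to x^*$; since the subgradients $g_{j_l^k}\in\partial f(y_{j_l^k})$ are bounded by the local Lipschitz constant (Theorem~\ref{nonsmo-theo}(i)), a further subsequence yields $\mathcal{T}_{x^*\leftarrow y_{j_l^k}}g_{j_l^k}\to\bar g_l$, and Theorem~\ref{nonsmo-theo}(iii) gives $\bar g_l\in\partial f(x^*)$.

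It then remains to show that the limiting combination vanishes. Transporting $\tilde g_k=\sum_l\hat\Lambda_{j_l^k}^k\hat{\mathcal{T}}_{x_k\leftarrow y_{j_l^k}}g_{j_l^k}$ into $T_{x^*}\mathcal{M}$ by the isometry $\mathcal{T}_{x^*\leftarrow x_k}$, the left-hand side has norm $\|\tilde g_k\|\to0$. On the right, $\hat{\mathcal{T}}_{x_k\leftarrow y_{j_l^k}}=\beta_\eta^{-1}\mathcal{T}_{x_k\leftarrow y_{j_l^k}}$ with $\eta=R_{x_k}^{-1}(y_{j_l^k})\to0_{x^*}$; using the continuity of $\beta^{-1}$ with $\beta_{0_{x^*}}^{-1}=1$ and the continuity of $\mathcal{T}$ with $\mathcal{T}_{0_{x^*}}=\mathrm{id}$ (Definition~\ref{defVT}(ii)), I would argue that $\mathcal{T}_{x^*\leftarrow x_k}\hat{\mathcal{T}}_{x_k\leftarrow y_{j_l^k}}g_{j_l^k}$ and $\mathcal{T}_{x^*\leftarrow y_{j_l^k}}g_{j_l^k}$ share the common limit $\bar g_l$. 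Passing to the limit then gives $\sum_l\bar\Lambda_l\bar g_l=0_{x^*}$, and applying Lemma~\ref{station1} with $\bar x=x^*$, $\bar q=0_{x^*}$, $\bar y_l=x^*$, $\bar g_l$, $\bar\Lambda_l$ (equivalently, the convexity of $\partial f(x^*)$ in Theorem~\ref{nonsmo-theo}(i)) yields $0_{x^*}\in\partial f(x^*)$.

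I expect the main obstacle to be the penultimate step. Because the subgradients $g_{j_l^k}$ live in distinct tangent spaces and are transported into $T_{x_k}\mathcal{M}$ along the retraction path through $x_k$ rather than directly to $x^*$, one must verify that the two transport routes agree in the limit; this is precisely where the continuity of the vector transport $\mathcal{T}$ and of $\beta^{-1}$ (flagged before the statement) is indispensable, and it is the part of the argument with no counterpart in the Euclidean setting of \cite{vlvcek2001globally}.
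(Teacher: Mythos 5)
Your proposal is correct and follows essentially the same route as the paper's proof: both extract the Carath\'eodory representation from Lemma~\ref{lambda1}, pass to subsequences with convergent weights and transported subgradients using the boundedness of $\{g_k\}$ and the continuity of $\mathcal{T}$, $\beta^{-1}$ and $R^{-1}$, and conclude via Lemma~\ref{station1} (equivalently, the convexity of $\partial f(x^*)$). The only organizational difference is that you force $y_{j_l^k}\to x^*$ for the positive-weight tracks up front and transport directly into $T_{x^*}\mathcal{M}$ (hence your two-route consistency check), whereas the paper passes to limits $(y_i^*,g_i^*)$ in the tangent bundle and lets Lemma~\ref{station1} absorb the observation that positive weights force $y_i^*=x^*$.
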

   \begin{proof}
      First, we generalize an important conclusion that emerged in \cite[Rem. 2.1]{hoseini2021proximal}. If there exists an infinite index set $\mathcal{I}\subseteq\{1,2,\ldots\}$ satisfying $(x_i,y_i,\xi_i) \rightarrow ({x^*},{y^*},\xi^*)$ as $i\rightarrow\infty$, $i\in\mathcal{I}$ and a sequence $\{\eta_i\}$ bounded such that $R_{x_i}^{-1}(y_i)=\eta_i$, then
      $$\lim\limits_{i\in\mathcal{I}}\mathcal{T}_{x_i\rightarrow y_i}(\xi_i) =\mathcal{T}_{x^*\rightarrow {y^*}}({\xi^*}).$$
      Since $R^{-1}$ is continuous, it follows that $\lim\limits_{i\in\mathcal{I}}\eta_i=\lim\limits_{i\in\mathcal{I}}R_{x_i}^{-1}(y_i)=R_{x^*}^{-1}(y^*)$. Using the continuity of $\mathcal{T}$, one has $$\lim\limits_{i\in\mathcal{I}}\mathcal{T}_{x_i\rightarrow y_i}(\xi_i)=\lim\limits_{i\in\mathcal{I}}\mathcal{T}_{R_{x_i}^{-1}(y_i)}(\xi_i)=\mathcal{T}_{R_{{x^*}}^{-1}({y^*})}({\xi^*})=\mathcal{T}_{{x^*}\rightarrow {y^*}}({\xi^*}).$$
   

   	Next, we establish the boundedness of $\{y_k\}$ and $\{g_k\}$.
    Since the level set $\mathcal N$ is bounded, we know that $\{x_k\}$ is bounded. Since $y_{k+1} \in {\rm cl}{R_{x_k}(B(0_{x_k}, \mu_0))}$ for all $k$, $\{y_k\}$ is also bounded.
   It then follows from Theorem \ref{nonsmo-theo} that the sequence  $\{g_k\}$ is bounded.
   In addition, from Lemma \ref{lambda1} and the Carath\'{e}odory theorem, we can conclude that there exists some index set
   $\mathcal{I}_k\subseteq\{1,2,\ldots,n+2\}$ and
   $y_{k,i}$, $g_{k,i}\in \partial f(y_{k,i})$, $\tilde{\Lambda}^{k}_i\geq 0$,  $i\in \mathcal{I}_k$ such that $\sum_{i\in \mathcal{I}_k}{\tilde{\Lambda}}^{k}_i=1$ and
           \begin{align*}
   		&(\tilde{g}_k,\tilde{\sigma}_k)=\sum_{i\in \mathcal I_k}\tilde{\Lambda}^{k}_i(\hat{\mathcal{T}}_{x_k\leftarrow {y}_{k,i}}(g_{k,i}),\|R_{x_k}^{-1}(y_{k,i})\|),\ \ \hbox{for\ all}~k\geq 1,
   	\end{align*}
   	where
   	$(y_{k,i},g_{k,i})\in \{(y_j,g_j):\ j=m,\ldots, k\}.$
       Since there are only finite ways to take $\mathcal{I}_k$, there must exist an infinite index set $\tilde{\mathcal{K}}\subseteq \mathcal{K}$ such that $\mathcal{I}_k=\mathcal{I}_0$ for all $k\in \tilde{\mathcal{K}}$, where $\mathcal{I}_0\subseteq\{1,2,\ldots,n+2\}$ is a fixed index set.
       Then we obtain
        \begin{equation}\label{Lem4.4-1}
       		(\tilde{g}_k,\tilde{\sigma}_k)=\sum_{i\in \mathcal I_0}\tilde{\Lambda}^{k}_i(\hat{\mathcal{T}}_{x_k\leftarrow {y}_{k,i}}(g_{k,i}),\|R_{x_k}^{-1}(y_{k,i})\|), \ \ \hbox{for\ all}~k\in \tilde{\mathcal{K}}.
       \end{equation}
   Since $\{y_{k,i}\}_{k=1}^{\infty}$ is bounded for each $i\in \mathcal{I}_0$, there must be an infinite index set $\mathcal{K}_1\subseteq\tilde{\mathcal{K}}$ and points $y^*_i$, $i\in \mathcal I_0$ such that $y_{k,i}{\rightarrow} y^*_i$ as $k\rightarrow\infty$, $k\in \mathcal{K}_1$, for all $i\in \mathcal{I}_0$. Similarly, by using the boundedness of $\{g_k\}$ and $\{\tilde{\Lambda}^{k}_i\}$, as well as the continuity of $\beta$, it follows from Theorem \ref{nonsmo-theo} (iii) that
   there exist $g^*_i\in \partial f(y^*_i)$, $\tilde{\Lambda}^*_i\geq 0$, $i\in \mathcal I_0$ and an infinite index set $\mathcal K_0\subseteq\mathcal{K}_1$ such that
   \begin{equation*}
   g_{k,i}\rightarrow g^*_i, \ \tilde{\Lambda}_{i}^k\rightarrow \tilde{\Lambda}^*_i\  {\rm and}\ \beta_{R_{x_k}^{-1}(y_{k,i})}\rightarrow \beta_{R_{x^*}^{-1}(y_{i}^*)},
   \end{equation*}
   as $k\rightarrow\infty$, $k\in \mathcal K_0$, for $i\in \mathcal I_0$. It is obvious that $\sum_{i\in \mathcal{I}_0}\tilde{\Lambda}^{*}_i=1$.  Since $R^{-1}$ is continuous, it follows that $\lim\limits_{k\in\mathcal{K}_0}R_{x_k}^{-1}(y_{k,i})=R_{x^*}^{-1}(y_i^*)$. Using the continuity of $\mathcal{T}$, one has $$\lim\limits_{k\in\mathcal{K}_0}\mathcal{T}_{x_k\leftarrow {y}_{k,i}}(g_{k,i})=\lim\limits_{k\in\mathcal{K}_0}\mathcal{T}_{R_{x_k}^{-1}(y_{k,i})}(g_{k,i})=\mathcal{T}_{R_{{x^*}}^{-1}({y_i^*})}({g_i^*})=\mathcal{T}_{{x}^*\leftarrow {y}_{i}^*}(g^*_i).$$
   Thus
   \begin{equation}\label{Lem4.4-2}
       \lim\limits_{k\in\mathcal{K}_0}\hat{\mathcal{T}}_{x_k\leftarrow {y}_{k,i}}(g_{k,i})=\lim\limits_{k\in\mathcal{K}_0}\beta_{R_{x_k}(y_{k,i})}{\mathcal{T}}_{x_k\leftarrow {y}_{k,i}}(g_{k,i})= \beta_{R_{x^*}^{-1}(y_{i}^*)}\mathcal{T}_{{x}^*\leftarrow {y}_{i}^*}(g^*_i)=\hat{\mathcal{T}}_{{x}^*\leftarrow {y}_{i}^*}(g^*_i).
   \end{equation}
   
       By (\ref{wk}), Lemma \ref{lambda1} and $\lim\limits_{k\in \mathcal{K}}w_{k}=0$, we have
   	\begin{equation}\label{Lem4.4-3}
    \tilde{g}_{k}\rightarrow 0_{x^*},\ \ \tilde{\alpha}_{k}\rightarrow 0,\ \ \tilde{\sigma}_{k}\rightarrow 0\ \ {\rm as} \ \ k\rightarrow\infty,\ \ k\in \mathcal K_0.
    \end{equation}
    Thus, by taking the limit as  $k\in \mathcal K_0$ and $k\rightarrow\infty$ in \eqref{Lem4.4-1}  and using \eqref{Lem4.4-2} and \eqref{Lem4.4-3}, we have
   $$
          (0_{x^*},0)=\sum_{i\in \mathcal I_0}\tilde{\Lambda}^{*}_i(\hat{\mathcal{T}}_{x^*\leftarrow {y}_{i}^*}(g_{i}^*),\|R_{x^*}^{-1}(y_{i}^*)\|).
   $$
   This together with Lemma \ref{station1} shows that $0_{x^*}\in \partial f({x}^*)$. \qed
   \end{proof}
   
   \begin{lemma}\label{lemma4.6}\cite[Lem. 3.5]{lukvsan1999globally}
        Let the vectors $p$, $q$ and the scalars $w\geq 0$, $\alpha \geq 0$, $\chi \geq 0$, $K\geq 0$, $c\in (0, 1/2)$ satisfy
   	$$w=\|p\|^2+2\alpha,\qquad \chi +\langle p,q\rangle\leq c w,\qquad \max\{\|p\|, \|q\|, \sqrt{\alpha}\}\leq K.$$
   	Let
   	$ 
   		Q(\lambda)=\|\lambda q+(1-\lambda)p\|^2+2(\lambda \chi+(1-\lambda)\alpha)$ and $b=\frac{1-2c}{4K}.
   	$ 
   	Then $\min\{Q(\lambda)\ \vert\ \lambda\in [0, 1]\}\leq w-w^2b^2.$
   \end{lemma}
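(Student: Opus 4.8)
The plan is to treat this as a purely algebraic optimization of a one-dimensional quadratic, since $p,q$ and the scalars all live in a single inner product space and no manifold structure intervenes (I would assume $K>0$, otherwise $w=0$ and the bound is trivial). First I would expand $Q$ as a quadratic in $\lambda$. Writing $\lambda q+(1-\lambda)p=p+\lambda(q-p)$ gives
$$Q(\lambda)=\|p\|^2+2\alpha+2\lambda\big(\langle p,q\rangle-\|p\|^2+\beta-\alpha\big)+\lambda^2\|q-p\|^2,$$
so that $Q(0)=\|p\|^2+2\alpha=w$ and $Q(\lambda)=w+L\lambda+A\lambda^2$ with linear coefficient $L=2\big(\langle p,q\rangle-\|p\|^2+\beta-\alpha\big)$ and quadratic coefficient $A=\|q-p\|^2$.

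The key step is to bound these two coefficients. For $L$, I would regroup the bracket as $(\beta+\langle p,q\rangle)-(\|p\|^2+\alpha)$, apply the hypothesis $\beta+\langle p,q\rangle\le cw$ together with the identity $\|p\|^2=w-2\alpha$ (hence $\|p\|^2+\alpha=w-\alpha$), and finally the bound $\alpha\le w/2$ that follows from $w=\|p\|^2+2\alpha\ge 2\alpha$. This yields bracket $\le -(1-c)w+\alpha\le -\tfrac{1-2c}{2}w$, i.e.\ $L\le-(1-2c)w$, which is negative since $c<1/2$. For $A$, the triangle inequality and $\|p\|,\|q\|\le K$ give $A=\|q-p\|^2\le 4K^2$.

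Consequently, for every $\lambda\ge 0$ one has $Q(\lambda)\le g(\lambda):=w-(1-2c)w\lambda+4K^2\lambda^2$. This scalar quadratic is minimized at $\lambda^*=\frac{(1-2c)w}{8K^2}$, with value $g(\lambda^*)=w-\frac{(1-2c)^2w^2}{16K^2}=w-w^2b^2$, using $b=\frac{1-2c}{4K}$. The only point requiring genuine care is verifying feasibility $\lambda^*\in[0,1]$: I would note $w=\|p\|^2+2\alpha\le 3K^2$ and $1-2c<1$, so that $\lambda^*\le \frac{3K^2}{8K^2}=\tfrac38<1$, while $\lambda^*\ge 0$ is immediate. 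Evaluating at this feasible $\lambda^*$ then gives $\min_{\lambda\in[0,1]}Q(\lambda)\le Q(\lambda^*)\le g(\lambda^*)=w-w^2b^2$, which is the claim.

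I expect no real obstacle here: the statement is an elementary one-variable quadratic descent estimate of exactly the type standard in bundle-method analysis, and the Riemannian setting plays no role, since all quantities already sit in one tangent space. The only things to watch are the chain of scalar inequalities yielding $L\le-(1-2c)w$ (in particular the step $\alpha\le w/2$) and the confirmation that the unconstrained minimizer $\lambda^*$ lands inside $[0,1]$, which is guaranteed by the bound $w\le 3K^2$.
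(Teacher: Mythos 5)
Your proof is correct. Note that the paper does not actually prove this lemma: its entire proof is a one-line citation to \cite{lukvsan1999globally}, where the result is established in the Euclidean setting. Your argument --- expanding $Q$ as a quadratic $w + L\lambda + A\lambda^2$, bounding the linear coefficient by $L \le -(1-2c)w$ using the hypothesis $\beta + \langle p,q\rangle \le cw$ together with $\|p\|^2 + \alpha = w - \alpha$ and $\alpha \le w/2$, bounding the quadratic coefficient by $A \le 4K^2$, and then minimizing the resulting majorant at $\lambda^* = (1-2c)w/(8K^2)$, which lies in $[0,1]$ because $w \le 3K^2$ forces $\lambda^* \le 3/8$ --- is exactly the standard argument contained in that reference, so you have in effect supplied the self-contained proof the paper omits. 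All steps check out, including your side remarks that the manifold structure plays no role (all quantities sit in a single inner product space) and that $K>0$ is implicitly required for $b$ to be defined.
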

   
   Now we present the first convergence result of Algorithm \ref{Algo:RQNBM}, 
   which states that if the number of serious steps is finite, then the last serious iterate is stationary. The proof of this theorem is an extension of the Euclidean case in \cite[Lem. 3.6]{vlvcek2001globally}. 
   The main difference between the Euclidean setting and the Riemannian setting occurs when $x_{k+1}$ is not equal to $x_k$. In the former setting $\tilde{\mathcal{H}}_{k}$ is equal to $\mathcal{H}_k$. However, $\tilde{\mathcal{H}}_{k}$ is different from $\mathcal{H}_k$ since the vector transport $\mathcal{T}$ is applied in our method.
   
   \begin{theorem}\label{Thm:4.2}
   Suppose that Assumptions~\ref{assum3}-\ref{assum1} hold and Algorithm \ref{Algo:RQNBM} generates a finite number of serious steps followed by an infinite number of null steps.
   Let $k^*$ be the index of the last serious iterate, then it follows that
    $x_{k^*}$ is a stationary point of $f$ on $\mathcal M$, i.e., $0_{x_{k^*}}\in \partial f(x_{k^*})$.
   \end{theorem}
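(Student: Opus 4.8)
The plan is to reduce the whole statement to Lemma~\ref{lemma4.5}. Since the algorithm takes only finitely many serious steps, I set $\bar x = x_{k^*}$ and note that $x_k = \bar x$ and $m = k^*$ for every $k \geq k^*$; in this regime the transports $\mathcal{T}_{x_k\to x_{k+1}}$ and $\mathcal{T}_{x_k\leftarrow x_{k+1}}$ are identity maps, so $\tilde{\mathcal H}_k = \mathcal H_k$ and only the SR1 branch is active. Because Lemma~\ref{lemma4.5} applied with the constant subsequence $x_k\equiv\bar x$ needs only an infinite index set along which $w_k\to 0$, it suffices to prove $w_k\to 0$ (indeed $\liminf_k w_k=0$ would already do). The engine driving this will be a one-step contraction $w_{k+1}\leq w_k - b^2 w_k^2$ valid for all large $k$, from which $w_k\downarrow 0$ follows by a routine monotone-limit argument.

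First I would establish the core decrease at the level of the subproblem~\eqref{mainsubprob}. Restricting its feasible set to the slice $\lambda_1=0$ and writing $\lambda=\lambda_2$, the objective $\varphi$ becomes exactly the function $Q(\lambda)$ of Lemma~\ref{lemma4.6} under the identifications $p=\mathcal H_k^{1/2}\tilde g_k$, $q=\mathcal H_k^{1/2}\hat{\mathcal T}_{x_k\leftarrow y_{k+1}}g_{k+1}$, $\alpha=\tilde\alpha_k$, $\beta=\alpha_{k+1}$; by \eqref{wk} the scalar $w$ of that lemma equals $w_k$. The hypothesis $\beta+\langle p,q\rangle\leq c\,w$ is precisely the null-step inequality \eqref{null-step-cond2} with $c=\theta_R\in(0,1/2)$, using $d_k=-\mathcal H_k\tilde g_k$ and the self-adjointness of $\mathcal H_k$, while the uniform bound $\max\{\|p\|,\|q\|,\sqrt\alpha\}\leq K$ comes from the boundedness of the level set $\mathcal N$ (hence of $\{x_k\}$, $\{y_k\}$ and, via Theorem~\ref{nonsmo-theo}, of the subgradients) together with the scaling step keeping $\mathcal H_k$ controlled. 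Since $\tilde g_{k+1}$ and $\tilde\alpha_{k+1}$ from \eqref{aggre1}--\eqref{aggre2} satisfy $\langle\tilde g_{k+1},\mathcal H_k\tilde g_{k+1}\rangle+2\tilde\alpha_{k+1}=\varphi(\lambda_k)\leq\min_\lambda Q(\lambda)$, Lemma~\ref{lemma4.6} yields
\begin{equation*}
\langle\tilde g_{k+1},\mathcal H_k\tilde g_{k+1}\rangle+2\tilde\alpha_{k+1}\leq w_k-b^2w_k^2,\qquad b=\frac{1-2\theta_R}{4K}.
\end{equation*}

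The hard part will be to pass from this bound, which involves $\mathcal H_k$, to the actual $w_{k+1}$, which is assembled from $\mathcal H_{k+1}$ through the SR1 update \eqref{SR1update}, the scaling, and the correction. This is where the two inequalities in \eqref{SR1cond1} are designed to do the work. When an SR1 update is performed, the rank-one structure of \eqref{SR1update} together with the first inequality in \eqref{SR1cond1} gives $\langle\tilde g_{k+1},\check{\mathcal H}_{k+1}\tilde g_{k+1}\rangle\leq\langle\tilde g_{k+1},\mathcal H_k\tilde g_{k+1}\rangle-\rho\|\tilde g_{k+1}\|^2$, which \emph{exactly cancels} the $+\rho\|\tilde g_{k+1}\|^2$ appended by a correction, so that $w_{k+1}\leq\langle\tilde g_{k+1},\mathcal H_k\tilde g_{k+1}\rangle+2\tilde\alpha_{k+1}\leq w_k-b^2w_k^2$ in every SR1 case, and the scaling only shrinks the quadratic term and never harms this. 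The genuinely delicate point I expect to wrestle with is the anomalous case of a correction \emph{without} an accompanying SR1 decrease (a null step with $i_U=0$ yet $\check w_{k+1}<\rho\|\tilde g_{k+1}\|^2$), which a priori inflates $w_{k+1}$ by $\rho\|\tilde g_{k+1}\|^2$. I would control these via the indicator $i_C$ and the threshold $\Gamma$ (Step~\ref{Step:31}--Step~\ref{Step:37}): each such correction leaves $\mathcal H_{k+1}\succeq\rho\,\mathrm{id}$ afterward, and a trace count resting on the second inequality in \eqref{SR1cond1} (each SR1 update lowers $\mathrm{tr}(\mathcal H)$ by at least $\rho n$, each correction raises it by $\rho n$, and positive definiteness bounds the running tally) shows these anomalies can occur only finitely often. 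Hence $w_{k+1}\leq w_k-b^2w_k^2$ holds for all sufficiently large $k$.

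Finally, this recursion with $w_k\geq 0$ makes $\{w_k\}$ eventually nonincreasing and convergent, and its limit $\bar w$ must satisfy $\bar w\leq\bar w-b^2\bar w^2$, forcing $\bar w=0$ and thus $w_k\to 0$. Invoking Lemma~\ref{lemma4.5} with $x^*=\bar x=x_{k^*}$ and the index set $\mathcal K=\{k:k\geq k^*\}$ then delivers $0_{x_{k^*}}\in\partial f(x_{k^*})$, completing the proof.
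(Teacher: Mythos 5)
Your skeleton is the paper's own route (the Lemma~\ref{lemma4.6} contraction restricted to the $\lambda_1=0$ slice, compensation of the correction term via \eqref{SR1cond1}, finiteness of uncompensated corrections, then Lemma~\ref{lemma4.5} with the constant sequence), but the part you yourself flag as delicate contains two genuine gaps. First, you claim that whenever an SR1 update is performed, the first inequality in \eqref{SR1cond1} is available and ``exactly cancels'' the $+\rho\|\tilde g_{k+1}\|^2$ added by a correction. That is not what the algorithm guarantees: Step~\ref{Step:SR1-update-condition} enforces \eqref{SR1cond1} only when $i_C=1$; while $i_C=0$ an SR1 update requires only $\langle\tilde g_k,v_k\rangle<0$, and if such an update is followed by a correction triggered by $\check w_{k+1}<\rho\|\tilde g_{k+1}\|^2$, the SR1 decrease $\langle\tilde g_{k+1},v_k\rangle^2/\langle\tilde u_k,v_k\rangle$ need not dominate $\rho\|\tilde g_{k+1}\|^2$. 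So your class of ``anomalies'' (only the $i_U=0$ corrections) is too narrow; SR1-with-correction steps taken while $i_C=0$ are equally uncontrolled.

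Second, the trace-counting argument cannot deliver the finiteness of these anomalies. Positive definiteness bounds the trace only from \emph{below}, so the tally ``$-\rho n$ per SR1 update, $+\rho n$ per correction'' bounds the number of (\eqref{SR1cond1}-compliant) SR1 updates in terms of the number of corrections, not the reverse; nothing a priori prevents the trace from growing, and ``each SR1 update lowers the trace by at least $\rho n$'' again presupposes \eqref{SR1cond1}. What actually closes this step (and is what the paper does) is a dichotomy on the counter $n_C$ against $\Gamma$: either $n_C<\Gamma$ forever, in which case there are fewer than $\Gamma$ corrections in total; or $n_C$ reaches $\Gamma$ at some $k_1$, after which $i_C=1$ permanently and one proves by induction that $\mathcal H_k-\rho\,{\rm id}_{x_k}\succ 0$ persists: if no SR1 update occurs then $\check{\mathcal H}_{k+1}=\mathcal H_k$ forces $\check w_{k+1}\geq\rho\|\tilde g_{k+1}\|^2$, so no correction fires and the property passes on; if SR1 occurs it must satisfy \eqref{SR1cond1}, the now-mandatory correction ($i_C=i_U=1$) is exactly compensated in both \eqref{wk+1} and \eqref{trace1}, and $\mathcal H_{k+1}-\rho\,{\rm id}_{x_{k+1}}=\check{\mathcal H}_{k+1}\succ 0$. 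Your observation that a correction leaves $\mathcal H_{k+1}\succeq\rho\,{\rm id}$ is exactly the base case of this induction, but it does not propagate without the $i_C=1$ regime, because an intervening SR1 update without \eqref{SR1cond1} can destroy the property. A smaller inaccuracy: the uniform bound $K$ needed in Lemma~\ref{lemma4.6} for $q=\mathcal W_k\hat{\mathcal T}_{x_{k^*}\leftarrow y_{k+1}}g_{k+1}$ requires $\|\mathcal W_k\|$ bounded, which comes from the trace monotonicity \eqref{trace1} (itself a product of the case analysis above), not from the scaling step, which only controls $\|\check{\mathcal H}_{k+1}\tilde g_{k+1}\|$.
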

   \begin{proof}
   Use ${{H}}_{k}$ and ${\tilde{{H}}}_{k}$ to represent the coordinate representation of operators ${\mathcal {H}}_{k}$ and $\tilde{\mathcal {H}}_{k}$, respectively.\footnote{Let $(U, \varphi)$ be a chart of the manifold $\mathcal{M}$ with $x\in U$. If $\mathcal{A}$ is an operator on $T_x\mathcal{M}$, its coordinate representation is defined by $A=({\rm D}\varphi(x))\circ\mathcal{A}\circ({\rm D}\varphi(x))^{-1}$.} We use ${\rm Tr}(H_k)$ to denote the trace of the matrix $H_k$, which is independent of the chosen basis \cite[Lem. 3.11]{huang2015broyden}. Since $k^*$ is the index of the last serious iterate, we have that $x_{k}= x_{k^*}$ for all $k\geq k^*$.
   
      Firstly, we show that there exists an index $k_0$ $(>k^*)$ such that
          \begin{equation}\label{wk+1}
              w_{k+1}\leq\langle\tilde{g}_{k+1},\tilde{\mathcal {H}}_{k}\tilde{g}_{k+1}\rangle+2\tilde{\alpha}_{k+1},\qquad \forall\ k\geq k_0,
          \end{equation}
   		\begin{equation}\label{trace1}
   		 {\rm Tr}({{H}}_{k+1})\leq {\rm Tr}({{H}}_k),\qquad \forall\ k\geq k_0.
   		\end{equation}
   Since the scaling of $\check{\mathcal{H}}_{k+1}$ in Step \ref{Step: scaling}
   of Algorithm \ref{Algo:RQNBM} does not impair the conclusions (\ref{wk+1}) and (\ref{trace1}) obtained, in the following analysis we will neglect this step. The proof is divided into two cases as follows.
   
   	{\bf Case 1}. If $n_{\rm C}\geq \Gamma$ is executed, let $k_1$ denote the index of the first execution. Then $i_{\rm C}=1$ for all $k\geq k_1$. Next, we prove that $\mathcal{H}_k- \rho{\rm id}_{T_{x_k}\mathcal{M}}$ is positive-definite for all $k>k_1$ by induction.
   
    From Step \ref{Step: correction} of Algorithm \ref{Algo:RQNBM} and the positive-definiteness of $\check{\mathcal{H}}_{{k_1+1}}$, we conclude that $\mathcal H_{{k_1+1}}-\rho {\rm id}_{T_{x_{k_1+1}}\mathcal{M}}$ is positive-definite. Assuming that $\mathcal{H}_{k}- \rho {\rm id}_{T_{x_k}\mathcal{M}}$ is positive-definite for $k\geq k_1+1$, we will show that $\mathcal{H}_{k+1}- \rho{\rm id}_{T_{x_{k+1}}\mathcal{M}}$ is positive-definite. According to Algorithm \ref{Algo:RQNBM}, if $\check{w}_{k+1}<\rho\|\tilde{g}_{k+1}\|$ or $\check{\mathcal H}_{k+1}$ is updated by SR1 or BFGS, we get $i_{\rm C}=i_{\rm U}=1$ and $\mathcal{H}_{k+1}- \rho{\rm id}_{T_{x_{k+1}}\mathcal{M}}=\check{\mathcal H}_{k+1}$;
    otherwise $\mathcal{H}_{k+1}- \rho{\rm id}_{T_{x_{k+1}}\mathcal{M}}=\check{\mathcal{H}}_{k+1}-\rho {\rm id}_{T_{x_{k+1}}\mathcal{M}}=\tilde{\mathcal{H}}_k- \rho{\rm id}_{T_{x_{k+1}}\mathcal{M}}=\mathcal{T}_{x_k\rightarrow x_{k+1}}\circ(\mathcal{H}_k- \rho{\rm id}_{T_{x_{k}}\mathcal{M}})\circ\mathcal{T}_{x_k\leftarrow x_{k+1}}$. Because of the positive-definiteness of $\mathcal{H}_{k}- \rho{\rm id}_{T_{x_{k}}\mathcal{M}}$, we know that $\mathcal{H}_{k+1}- \rho{\rm id}_{T_{x_{k+1}}\mathcal{M}}$ is positive-definite.
    In conclusion, $\mathcal{H}_k- \rho{\rm id}_{T_{x_{k}}\mathcal{M}}$ is positive-definite for all $k> k_1$.
   
   Define
   	$k_0=\max\{{k}_1,{k}^*\}+1$.
   Thus, the update for serious step (Steps \ref{Step:20}-\ref{Step:25}) of Algorithm \ref{Algo:RQNBM} is not performed for all $k\geq k_0$.
   If the SR1 update is not used, then $i_{\rm U}=0$ and $\check{\mathcal{H}}_{k+1}=\tilde{\mathcal{H}}_k$.
   This implies that $\check{\mathcal{H}}_{k+1}- \rho{\rm id}_{T_{x_{k+1}}\mathcal{M}}\ (=\tilde{\mathcal{H}}_k-\rho {\rm id}_{T_{x_{k+1}}\mathcal{M}}={\mathcal{H}}_k- \rho{\rm id}_{T_{x_{k}}\mathcal{M}})$ is positive-definite. Then it follows that $\langle\tilde{g}_{k+1},(\check{\mathcal{H}}_{k+1}- \rho{\rm id}_{T_{x_{k+1}}\mathcal{M}})\tilde{g}_{k+1}\rangle\geq 0$,
   and therefore $$\check{w}_{k+1}=\langle\tilde{g}_{k+1},\check{\mathcal{H}}_{k+1}\tilde{g}_{k+1}\rangle+2\tilde{\alpha}_{k+1}\geq \rho\|\tilde{g}_{k+1}\|^2+2\tilde{\alpha}_{k+1}\geq\rho\|\tilde{g}_{k+1}\|^2.$$
    Thus, from Step \ref{Step:no-correction} of Algorithm \ref{Algo:RQNBM}, we have $w_{k+1}=\check{w}_{k+1}$, $\mathcal H_{k+1}=\check{\mathcal{H}}_{k+1}=\tilde{\mathcal{H}}_k$. This together with the fact that ${\rm Tr}({\tilde{{H}}}_{k})= {\rm Tr}({{H}}_k)$ (see  \cite[Lem. 3.11]{huang2015broyden}) shows that  (\ref{wk+1}) and (\ref{trace1}) are satisfied.
   
   On the other hand, if the SR1 update is executed, then the conditions $\langle\tilde{g}_{k},v_k\rangle<0$ and (\ref{SR1cond1}) are satisfied,
   as well as $i_{\rm C}=i_{\rm U}=1$. Hence, Step \ref{Step: correction} is executed, which
    along with (\ref{SR1update}) shows that
        \begin{align*}
            w_{k+1}&=\check{w}_{k+1}+ \rho\|\tilde{g}_{k+1}\|^2\\  &=\langle\tilde{g}_{k+1},\check{\mathcal{H}}_{k+1}\tilde{g}_{k+1}\rangle+2\tilde{\alpha}_{k+1} + \rho\|\tilde{g}_{k+1}\|^2\\
         &= \langle\tilde{g}_{k+1},(\mathcal{\tilde{H}}_k-\frac{v_kv_k^\flat}{\tilde{u}_k^\flat v_k})\tilde{g}_{k+1}\rangle+2\tilde{\alpha}_{k+1} + \rho\|\tilde{g}_{k+1}\|^2\\
         &= \langle\tilde{g}_{k+1},\tilde{\mathcal{H}}_k\tilde{g}_{k+1}\rangle+2\tilde{\alpha}_{k+1}+ \rho \|\tilde{g}_{k+1}\|^2-\frac{\langle\tilde{g}_{k+1},v_k\rangle^2}{\langle \tilde{u}_k,v_k\rangle}
        \end{align*}
   	and
   	$${\rm Tr}({H}_{k+1})={\rm Tr}({{{H}}}_k)+\rho n-\frac{\|v_k\|^2}{\langle\tilde{u}_k,v_k\rangle}.$$
    Therefore, it follows from (\ref{SR1cond1}) that  (\ref{wk+1}) and (\ref{trace1}) hold.
   	
   {\bf Case 2}.	If $n_{\rm C}<\Gamma$ for all $k\geq 1$, let $\bar{k}$ denote the index of the last change in $n_{\rm C}$. Set $k_0=\max\{\bar{k},k^*\}+1$, then we have $w_{k+1}=\check{w}_{k+1}$ and $\mathcal H_{k+1}=\check{\mathcal{H}}_{k+1}$ for all $k> k_0$. If the SR1 update is used, we have
   $$w_{k+1}=\langle\tilde{g}_{k+1},\check{\mathcal{H}}_{k+1}\tilde{g}_{k+1}\rangle+2\tilde{\alpha}_{k+1}=\langle\tilde{g}_{k+1},\tilde{\mathcal{H}}_k\tilde{g}_{k+1}\rangle
   	+2\tilde{\alpha}_{k+1}-\frac{\langle\tilde{g}_{k+1},v_k\rangle^2}{\langle \tilde{u}_k,v_k\rangle}$$
   	and
   	$${\rm Tr}({H}_{k+1})={\rm Tr}({{{H}}}_k)-\frac{\|v_k\|^2}{\langle\tilde{u}_k,v_k\rangle}.$$
    Since $\langle\tilde{u}_k,v_k\rangle>0$, (\ref{wk+1}) and (\ref{trace1}) can be obtained. Otherwise (the SR1 update is not used), it is clear that (\ref{wk+1}) and (\ref{trace1}) hold with equalities.

   Secondly, we prove the conclusion that $w_k\rightarrow 0$.
        Combining (\ref{mainsubprob})-(\ref{aggre2}) with (\ref{wk}) and (\ref{wk+1}), we obtain
           \begin{equation}\label{equ:31}
               \begin{split}
           w_{k+1} & \leq \langle\tilde{g}_{k+1},\tilde{\mathcal{H}}_k\tilde{g}_{k+1}\rangle
           +2\tilde{\alpha}_{k+1} \\
           & =\|\lambda_{k}^{(1)}\mathcal{W}_kg_m+\lambda_{k}^{(2)}\mathcal{W}_k\hat{\mathcal{T}}_{x_k\leftarrow y_{k+1}}(g_{k+1})+\lambda_{k}^{(3)}\mathcal{W}_k\tilde{g}_k\|^2+2(\lambda_{k}^{(2)}\alpha_{k+1}+\lambda_{k}^{(3)}\tilde{\alpha}_k) \\
           &=\varphi(\lambda_{k}^{(1)},\lambda_{k}^{(2)},\lambda_{k}^{(3)})\\
           &\leq\varphi(0,0,1)\\
           &=w_k, \ \ \ \hbox{for\ all}\ k\geq k_0.
            \end{split}
           \end{equation}
   Hence the sequence $\{w_k\}$ is bounded and convergent. From (\ref{wk}), we see that $\tilde{\alpha}_k\leq w_k$, and therefore $\{\tilde{\alpha}_k\}$ is bounded. In addition, using Lemma \ref{lemma4.1} and (\ref{trace1}), we obtain the boundedness of $\{\mathcal W_k\tilde{g}_k\}$, $\{\mathcal{H}_k\}$ and $\{\mathcal W_k\}$. Since ${\rm D}R_{x_{k^*}}$ is bounded on ${\rm cl}{B}(0_{x_{k^*}}, \mu_0)$, there is a real $\mu_1>0$ such that $\beta_{\eta}\leq \mu_1$ for all $\eta\in{\rm cl}{B}(0_{x_{k^*}},\mu_0)$.  Then the sequence $\{\mathcal W_k\hat{\mathcal{T}}_{x_{k^*}\leftarrow {y}_{k+1}}(g_{k+1})\}$ is bounded by the boundedness of $\{\mathcal{T}_{x_{k^*}\leftarrow {y}_{k+1}}(g_{k+1})\}$.

     Moreover, it follows that
   	\begin{align*}
   		& \min\{\varphi(\lambda^{(1)},\lambda^{(2)},\lambda^{(3)})\ \vert\ \lambda^{(i)}\geq 0,i=1,2,3, \ \sum_{i=1}^{3}\lambda^{(i)}=1\} \leq \min_{\lambda\in[0,1]}\varphi(0,\lambda,1-\lambda).
   	\end{align*}
       Taking into account that $x_{k}=x_{k^*}$ for all $k\geq k_0$, from (\ref{equ:31}) we have
       \begin{align*}
           	w_{k+1}&\leq \min_{\lambda\in [0,1]} \{\|\lambda \mathcal W_k\hat{\mathcal{T}}_{x_{k^*}\leftarrow {y}_{k+1}}(g_{k+1})+(1-\lambda)\mathcal W_k\tilde{g}_k\|^2+2(\lambda\alpha_{k+1}+(1-\lambda)\tilde{\alpha}_k)\}
       \end{align*}
       for all $k\geq k_0$. Define $M=\sup\{\|\mathcal W_k\hat{\mathcal{T}}_{x_{k^*}\leftarrow {y}_{k+1}}(g_{k+1})\|,\|\mathcal W_k\tilde{g}_k\|,\sqrt{\tilde{\alpha}_k}\ \vert\ k\geq k_0\},\ \  b=\frac{1-2\theta_R}{4M}$. Take
   	\begin{flalign*}
   		\begin{array}{llll}
   			p=\mathcal W_k\tilde{g}_k,\ \  &q=\mathcal W_k\hat{\mathcal{T}}_{x_{k^*}\leftarrow {y}_{k+1}}(g_{k+1}),\ \ & w=w_k,& \\
   			\alpha=\tilde{\alpha}_k, & \chi=\alpha_{k+1}, & c=\theta_R.&
   		\end{array}
   	\end{flalign*}
    This together with (\ref{wk}) and Steps \ref{LS:11}-\ref{LS:13} of Procedure \ref{Procedure:LS} shows that the conditions in Lemma \ref{lemma4.6} are satisfied for all $k\geq k_0$. Thus we obtain
   	$$w_{k+1}\leq w_k-(w_kb)^2,\ \ \forall\ k\geq k_0,$$
   which implies that the sequence $\{w_k\}$ converges to $0$.
   
    Finally, since $\{x_k\}$ converges to $x_{k^*}$,  $0_{x_{k^*}}\in\partial f(x_{k^*})$ by Lemma \ref{lemma4.5}. \qed
   \end{proof}
   
   \vspace{3mm}
   We close this section by considering the latter case where the number of serious steps is infinite.
   \begin{theorem}\label{Thm:4.3}
   Under Assumptions~\ref{assum3}-\ref{assum1}, if Algorithm \ref{Algo:RQNBM} generates an infinite number of serious steps, then every cluster point of $\{x_k\}$ is a stationary point of $f$ on $\mathcal M$.
   \end{theorem}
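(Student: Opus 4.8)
The plan is to reduce the claim to Lemma~\ref{lemma4.5}, exactly as its statement invites: for a fixed cluster point $x^*$ of $\{x_k\}$ it suffices to produce an infinite index set $\mathcal K$ along which $x_k\to x^*$ \emph{and} $w_k\to 0$, after which Lemma~\ref{lemma4.5} delivers $0_{x^*}\in\partial f(x^*)$. The overall scheme follows the Euclidean argument of \cite{vlvcek2001globally}, the new difficulties being the retraction, the vector transport and the factor $\beta^{-1}$ entering the locality measure~\eqref{deltak}.

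I would begin with the descent bookkeeping. Let $\mathcal S$ be the (infinite) set of serious-step indices. At a serious step the acceptance test in Procedure~\ref{Procedure:LS} yields $f(x_{k+1})\le f(x_k)-\theta_L t_R^k w_k$, whereas at a null step $x_{k+1}=x_k$; hence $\{f(x_k)\}$ is nonincreasing. Since all iterates lie in the bounded level set $\mathcal N$ and $f$ is continuous, $\{f(x_k)\}$ is bounded below, so summing over $\mathcal S$ gives $\sum_{k\in\mathcal S}\theta_L t_R^k w_k<\infty$ and therefore $t_R^k w_k\to 0$ as $k\to\infty$ in $\mathcal S$. I would then observe that the cluster points of $\{x_k\}$ coincide with those of $\{x_k\}_{k\in\mathcal S}$: the center is constant along null steps, so any subsequence $x_{k_i}\to x^*$ can be shifted to the first serious index $k_i'\ge k_i$, at which $x_{k_i'}=x_{k_i}\to x^*$. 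This fixes an infinite $\mathcal K\subseteq\mathcal S$ with $x_k\to x^*$, along which $t_R^k w_k\to 0$.

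The core step is to show $w_k\to 0$ along $\mathcal K$. Arguing by contradiction, if $w_k\ge\varepsilon>0$ on a subsequence then $t_R^k\le(t_R^k w_k)/\varepsilon\to 0$, so eventually $t_R^k<t_{\min}$; since $k\in\mathcal S$, the serious branch of Procedure~\ref{Procedure:LS} can then be reached only via $\delta_{k+1}>\theta_A w_k\ge\theta_A\varepsilon$. I would refute this by proving $\delta_{k+1}\to 0$. The scaling in Step~\ref{Step: scaling} guarantees $\|\check{\mathcal H}_k\tilde g_k\|\le D$, which together with the uniform bound $\|\tilde g_k\|\le\mu_1 L$ makes $\{d_k\}$ bounded; consequently $t_R^k d_k\to 0$ and $y_{k+1}=R_{x_k}(t_R^k d_k)\to x^*$ by continuity of $R$. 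Then $\gamma(t_R^k\|d_k\|)^\nu\to 0$, continuity of $f$ gives $f(x_k)-f(y_{k+1})\to 0$, and $t_R^k\langle\hat{\mathcal T}_{x_k\leftarrow y_{k+1}}g_{k+1},d_k\rangle\to 0$ because $\|g_{k+1}\|\le L$ and $\beta^{-1}\to 1$; by the definition~\eqref{deltak} this forces $\delta_{k+1}\to 0$, the desired contradiction. Hence $w_k\to 0$ along $\mathcal K$ and Lemma~\ref{lemma4.5} finishes the proof.

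I expect the main obstacle to be this last estimate for $\delta_{k+1}$. Unlike the Euclidean setting, the boundedness of $\{d_k\}$ is not available from the trace/monotonicity argument used in Theorem~\ref{Thm:4.2} (which is special to the case of finitely many serious steps), and must instead be extracted from the scaling step; moreover one must control the transported subgradient $\hat{\mathcal T}_{x_k\leftarrow y_{k+1}}g_{k+1}$ through the continuity of $\beta^{-1}$ at $0_x$ and the boundedness of $\mathcal T$ along the convergent subsequence. A secondary point, needing the standing hypothesis of infinitely many serious steps, is the reduction of an arbitrary cluster point of $\{x_k\}$ to one realized along serious indices.
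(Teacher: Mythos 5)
Your proposal is correct and takes essentially the same route as the paper's proof: reduce to Lemma \ref{lemma4.5} by shifting cluster points to serious-step indices, use the monotone descent inequality to get $t_L^k w_k \to 0$, and when $t_L^k < t_{\min}$ invoke the line-search alternative $\delta_{k+1} > \theta_A w_k$, which is refuted by showing $\delta_{k+1} \to 0$ using the scaling/correction bound $\|d_k\| \le D + \rho\|\tilde g_k\|$ and the boundedness of the transported subgradients. The only (inessential) structural difference is that you fold the paper's explicit two-case split (the sets $\mathcal{K}_2$ with $t_L^k \ge t_{\min}$ and $\mathcal{K}_3$ with $\delta_{k+1} > \theta_A w_k$) into a single contradiction argument on a subsequence where $w_k \ge \varepsilon$.
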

   \begin{proof}
   	Let ${x}^*$ be a cluster point of $\{x_k\}$, i.e., there is an infinite index set ${\mathcal{K}}$ such that $x_k\rightarrow x^*, \ k\in{\mathcal{K}}$. Define $\mathcal{K}_1=\{k ~ \in \mathbb{N}~ \vert\ \ t^k_{\rm L}>0,\ \exists\ i\in{\mathcal{K}},~i\leq k,~x_i=x_k\}$. It is obvious that $\mathcal{K}_1$ is infinite and $x_k\rightarrow x^*, \ k\in{\mathcal{K}_1}$. Because of the monotonicity of $\{f(x_k)\}$ and the continuity of $f$, it follows that $f(x_k)-f(x_{k+1})\rightarrow 0$ as $k\rightarrow\infty$.
    Thus, from Procedure \ref{Procedure:LS} and the nonnegativity of $t_{\rm L}^k$, we have
   	\begin{equation}\label{theorem4.1}
   		0\leq\theta_{\rm L}t^k_{\rm L}w_k\leq f(x_k)-f(x_{k+1})\rightarrow 0,\ \ k\rightarrow\infty.
   	\end{equation}
   
   Define $\mathcal{K}_2=\{k\in \mathcal{K}_1\ \vert\ t^k_{\rm L}\geq t_{\rm min}\}$. If $\mathcal{K}_2$ is infinite, then $w_k\rightarrow 0$, $x_k \rightarrow {x}^*$ as $k\rightarrow\infty$, $k\in \mathcal{K}_2$ by (\ref{theorem4.1}). Therefore, using Lemma \ref{lemma4.5}, we have the result $0_{x^*}\in\partial f(x^*)$. Otherwise, the set $\mathcal{K}_3=\{k\in \mathcal{K}_1\ \vert\  \delta_{k+1}>\theta_{\rm A}w_k\}$
   	is infinite. Next, we establish that $w_k\rightarrow 0$ as $k\rightarrow\infty$, $k\in \mathcal{K}_3$ by contradiction.
    Suppose that there exists a constant $\gamma_1>0$ and an infinite index set $\mathcal{K}^*\subseteq \mathcal{K}_3$ such that $w_k\geq \gamma_1$ for all $k\in \mathcal{K}^*$.
    Then, from (\ref{theorem4.1}) we have $\lim\limits_{k\in \mathcal{K}^*}t^k_{\rm L}=0$. In addition, from Steps \ref{dk}, \ref{Step: correction} and \ref{Step:no-correction} of Algorithm \ref{Algo:RQNBM}, we obtain
       $$\|t^k_{\rm L}d_k\| \leq t^k_{\rm L}(\|\check{\mathcal{H}}_k\tilde{g}_k\|
   	+ \rho \|\tilde{g}_k\|)\leq t^k_{\rm L}(D+\rho\|\tilde{g}_k\|).$$
     Since the sequence $\{g_k\}$ is bounded,
     from (\ref{aggre1}) we deduce the boundedness of $\{\tilde{g}_k\}$ by induction. Therefore, from the continuity of $R$ and $R^{-1}$, we obtain that
     \begin{equation}\label{equ:33}
         \lim_{k\in \mathcal{K}^*}\|R_{x_k}^{-1}(x_{k+1})\|=\lim_{k\in \mathcal{K}^*}\|t^k_{\rm L}d_k\|=0
     \end{equation}
     and
     \begin{equation}\label{equ:34}
          \lim\limits_{k\in \mathcal{K}^*}x_{k+1}=\lim\limits_{k\in \mathcal{K}^*}R_{x_{k}}(t^k_{\rm L}d_k)=R_{x^*}(0_{x^*})=x^*.
     \end{equation}
     Here $x_{k+1} = y_{k+1}$. From Procedure \ref{Procedure:LS}, we get
       $
            \delta_{k+1}=\max \{\vert f(x_k)-f(x_{k+1})+t^k_{\rm L}\langle\hat{\mathcal{T}}_{x_{k}\leftarrow x_{k+1}}(g_{k+1}),d_k\rangle\vert,\gamma(\|R_{x_k}^{-1}(x_{k+1})\|)^\nu\}$. Combining (\ref{theorem4.1})-(\ref{equ:34}), we have $\lim\limits_{k\in \mathcal{K}^*} \delta_{k+1}= 0$, which contradicts the fact that
   	$$\theta_{\rm A}\gamma_1\leq \theta_{\rm A}w_k<\delta_{k+1},\ \ \forall\ k\in \mathcal{K}^*.$$
   Therefore, it follows that $w_k \rightarrow 0$, $x_k\rightarrow x^*$ as $k\rightarrow\infty$, $k\in \mathcal{K}_3$, which completes the proof by using Lemma \ref{lemma4.5}. \qed
   \end{proof}
   
   \section{Modification with limited-memory BFGS and SR1 updates} \label{sec5}
   
   In Algorithm \ref{Algo:RQNBM}, the update of the operator $\check{\mathcal{H}}_{k}$ by the BFGS and SR1 formulas may lead to computational and storage difficulties for large-scale problems.
   In addition, the calculation of $\mathcal{\tilde{H}}_k=\mathcal{T}_{x_k\rightarrow x_{k+1}}\circ\mathcal{H}_k\circ\mathcal{T}_{x_k\leftarrow x_{k+1}}$ may require matrix multiplication, which also lead to expensive computation. 
    To further reduce the computational cost, we present a modified algorithm (Algorithm \ref{Algo:LRQNBM}) that incorporates the limited-memory BFGS and SR1 update schemes \cite{byrd1994representations,huang2015broyden,huang2015riemannian}.
    
   
   
   \renewcommand{\thealgorithm}{2}
   \begin{algorithm} 
   \caption{Modification with limited-memory BFGS and SR1 updates (M-RQNBM)}
   \label{Algo:LRQNBM}
   \begin{algorithmic}[1]
   \Require  Initial iterate $x_1\in \mathcal{M}$;  initial symmetric positive-definite operator $\mathcal{H}_1$; correction parameters $\tilde{\rho}\in(0, 1)$; stepsize control parameters $t_{\rm min}\in (0,1)$ and $t_{\max}\geq 1$;  constant $0<\mu_0<{\rm Inj}_R(\mathcal{M})$; integer $m_l>0$; length control $\tilde{D}>0$; tolerance $\varepsilon\geq 0$.
   
   \State Set $y_1=x_1$, and compute $g_1\in \partial f(y_1)$. Initialize 
   $\tilde{g}_1=g_1$, $\alpha_1=0$,  $\tilde{\alpha}_1 = 0$,  $m=1$, $l=0$, the discarding indicator $nS = 0$ and $k=1$. \label{Step:1}
   
   \For {$k=1, 2, \ldots$}     
     \State Compute $d_k=-\mathcal{H}_k\tilde{g}_k$, where $\mathcal{H}_k~ (k>1)$ is generated  by (\ref{LBFGS}) if $m = k$ and by (\ref{Lsr1}), otherwise. If $\|d_k\|>\tilde{D}$, then set $d_k = (\tilde{D}/\|d_k\|)d_k$. 
    \Comment{{\it Generate direction}} \label{dk2}
   \State If $nS = 1$,  then discard vector pair $\{s_{k-l-1}^{(k)},u_{k-l-1}^{(k)}\}$ from storage. 
   \State Set $S_k = [s_{k-l}^{(k)},\ldots, s_{k-1}^{(k)}]$, $U_k = [u_{k-l}^{(k)},\ldots, u_{k-1}^{(k)}]$ and $nS=0$.
   
   \State Compute
         $w_{k} =-\langle\tilde{g}_{k},{d}_{k}\rangle+2\tilde{\alpha}_{k}.$ \label{Step1:4}
   \State {\bf If} {$w_k\leq\varepsilon$} {\bf then}
    Return $x_k$. \Comment{{\it Stopping criterion}} \label{Step:stoping2}

   \State Select $t^k_{\rm I}\in \left(0, \min\{t_{\max}, \mu_0/\|d_k\|\}\right]$,
   and call Procedure \ref{Procedure:LS}: 
   \State \hskip 1.2cm $(t_{\rm L}^k, t_{\rm R}^k,\delta_{k+1}, \alpha_{k+1})={\rm LS}(x_k, d_k, w_k, t^k_{\rm I}, t_{\rm min})$.\Comment{{\it Line-search}}\label{Line-search2}
   
    \State Set $x_{k+1}=R_{x_k}(t^k_{\rm L} d_k)$ and $y_{k+1}=R_{x_k}(t^k_{\rm R}d_k)$.  Compute $g_{k+1}\in \partial f(y_{k+1})$. 
    \label{Iteration-Lupdate}
   \If{ $t^k_{\rm L}=0$}  \Comment{{\textit{Update after a null step}}}
   \State	Solve subproblem (\ref{mainsubprob}) to obtain $\lambda_k$. 
   \State Compute $\tilde{g}_{k+1}$ and $\tilde{\alpha}_{k+1}$ by (\ref{aggre1}) and (\ref{aggre2}). \Comment{{\it Subgradient aggregation}} \label{Step1:LSub-agg}
   \State Define $u_k = \mathcal{T}_{x_k\leftarrow y_{k+1}}(g_{k+1})-g_m$, $s_k=t^k_{\rm R}d_k$, $v_k = \mathcal{H}_k u_k - s_k$.
   \If{$\langle\tilde{g}_{k},v_k\rangle<0$  and (\ref{SR1cond1}) holds}
    \label{Step:LSR1-update-condition}
   \Comment{{\it LSR1 update}} \label{Step1:LSR1-update}
   
     \State Set $\tau_{k+1} = 1$. Add $s_k^{(k+1)}$ and $u_k^{(k+1)}$ into storage. If $l\geq m_l$, then 
     \Statex\qquad\qquad\enspace discard vector pair 
    $\{s_{k-l}^{(k)},u_{k-l}^{(k)}\}$ from storage, else $l = {l+1}$. 
    \Statex\qquad\qquad\enspace Update $\tilde{R}_{k+1}$, $C_{k+1}$ and $U_{k+1}^\flat U_{k+1}$.
     
   	
   \Else
   \State Set $\tau_{k+1} = 1$, $S_{k+1} = S_{k}$, $U_{k+1} = U_{k}$.  Update $\tilde{R}_{k+1}$, $C_{k+1}$, $U_{k+1}^\flat U_{k+1}$.
   
   \EndIf
   
   \Else\ ($t^k_{\rm L}>0$)   \Comment{{\bf{\it Update after a serious step}}}
   \State	Set $\tilde{g}_{k+1}=g_{k+1}$, $\tilde{\alpha}_{k+1}=0$, $m=k+1$. \label{Step1:20}\Comment{{\it Aggregation reset}}
   \State Define $u_k = g_{k+1}-\mathcal{T}_{x_k\rightarrow x_{k+1}}(g_m)$,  $s_k=\mathcal{T}_{x_k\rightarrow x_{k+1}}(t^k_{\rm R}d_k)$. \label{Step1:prepare-update1}
   
   \If{$\langle u_k,s_k\rangle>\tilde{\rho}$} \label{Step1:BGFS-update-condition}
   \Comment{{\it LBFGS update}} 
       \State Define $\rho_k = 1/\langle s_k^{(k+1)},u_k^{(k+1)}\rangle$ and $\tau_{k+1} = \langle s_k^{(k+1)},u_k^{(k+1)}\rangle/\|u_k^{(k+1)}\|^2$. 
       \Statex\qquad\qquad\enspace  Add $\{s_k^{(k+1)},u_k^{(k+1)}\}$
   and $\rho_k$ into storage. If $l\geq m_l$, then set $nS = 1$, else 
   \Statex\qquad\qquad\enspace $l = {l+1}$.
        $[s^{(k+1)}_{k-l}, \ldots, s^{(k+1)}_{k-1}]$ = 
       $[\mathcal{T}_{x_k\rightarrow x_{k+1}}(s^{(k)}_{k-l}), \ldots, \mathcal{T}_{x_k\rightarrow x_{k+1}}(s^{(k)}_{k-1})]$,
       \Statex\qquad\qquad\enspace  $[u^{(k+1)}_{k-l}, \ldots, u^{(k+1)}_{k-1}] = [\mathcal{T}_{x_k\rightarrow x_{k+1}}(u^{(k)}_{k-l}), \ldots, \mathcal{T}_{x_k\rightarrow x_{k+1}}(u^{(k)}_{k-1})]$. 
   
   
   \label{Step1:LBFGS-update}
   \Else
      \State  
      Set $\tau_{k+1}=\tau_k$, $\{\rho_{k-l+1},\ldots,\rho_{k}\}=\{\rho_{k-l},\ldots,\rho_{k-1}\}$, 
      \Statex\qquad\qquad\enspace  $[s^{(k+1)}_{k-l+1}, \ldots, s^{(k+1)}_{k}]$ = 
       $[\mathcal{T}_{x_k\rightarrow x_{k+1}}(s^{(k)}_{k-l}), \ldots, \mathcal{T}_{x_k\rightarrow x_{k+1}}(s^{(k)}_{k-1})]$,
       \Statex\qquad\qquad\enspace  $[u^{(k+1)}_{k-l+1}, \ldots, u^{(k+1)}_{k}] = [\mathcal{T}_{x_k\rightarrow x_{k+1}}(u^{(k)}_{k-l}), \ldots, \mathcal{T}_{x_k\rightarrow x_{k+1}}(u^{(k)}_{k-1})]$. 
   \EndIf	
   \EndIf
    \EndFor
   \end{algorithmic}
   \end{algorithm}
   It has been shown in~\cite[Sec. 5]{huang2015broyden} that the Riemannian BFGS update
   (\ref{BFGSupdate}) can be equivalently written as $\mathcal{H}_k = \mathcal{V}_{k-1}^\flat\mathcal{\tilde{H}}_{k-1}\mathcal{V}_{k-1} + \rho_{k-1}s_{k-1}s_{k-1}^\flat$, where $\mathcal{V}_{k-1} = {\rm id}_{T_{x_{k-1}}\mathcal{M}}-\rho_{k-1}u_{k-1}s_{k-1}^\flat$, $\mathcal{V}_{k-1}^\flat = {\rm id}_{T_{x_{k-1}}\mathcal{M}}-\rho_{k-1}s_{k-1}u_{k-1}^\flat$, 
   $\mathcal{\tilde{H}}_{k-1}=\mathcal{T}_{x_{k-1}\rightarrow x_{k}}\circ\mathcal{H}_{k-1}\circ\mathcal{T}_{x_{k-1}\leftarrow x_{k}}$ and $\rho_{k-1} = \frac{1}{\langle u_{k-1},s_{k-1}\rangle}$. 
   When $l$ most recent $s$ and $u$, i.e., $(s_{k-1}, u_{k-1}), (s_{k-2}, u_{k-2}), \ldots (s_{k-l}, u_{k-l})$, are stored, the quasi-Newton operator $\mathcal{H}_k$ can be written as
      \begin{equation}\label{LBFGS}
     \begin{split}
         \mathcal{H}_{k} = &\mathcal{\tilde{V}}_{k-1}^\flat\mathcal{\tilde{V}}_{k-2}^\flat\cdots\mathcal{\tilde{V}}_{k-l}^\flat\mathcal{\tilde{H}}_{k}^0\mathcal{\tilde{V}}_{k-l}\cdots\mathcal{\tilde{V}}_{k-2}\mathcal{\tilde{V}}_{k-1} \\   &+\rho_{k-l}     \mathcal{\tilde{V}}_{k-1}^\flat\mathcal{\tilde{V}}_{k-2}^\flat\cdots\mathcal{\tilde{V}}_{k-l+1}^\flat s_{k-l}^{(k)}s_{k-l}^{(k)\flat}\mathcal{\tilde{V}}_{k-l+1}\cdots\mathcal{\tilde{V}}_{k-2}\mathcal{\tilde{V}}_{k-1}\\
         &+\cdots+ \rho_{k-1}s_{k-1}^{(k)}s_{k-1}^{(k)\flat}, \ \ k>1,
     \end{split}
   \end{equation}
   where 
   $\mathcal{\tilde{H}}_{k}^0 =\frac{\langle s_{k-1},u_{k-1}\rangle}{\langle u_{k-1},u_{k-1}\rangle} {\rm id}_{T_{x_k}\mathcal{M}}$ is the initial quasi-Newton operator for $k$, $s_i^{(k)}$ and $u_i^{(k)}$ represent tangent vectors in $T_{x_{k}}\mathcal{M}$ given by transporting $s_i$ and $u_i$, 
    $\tilde{\mathcal{V}}_{i} = {\rm id}_{T_{x_{k}}\mathcal{M}}-\rho_{i}u_{i}^{(k)}s_{i}^{(k)\flat}$ and $\tilde{\mathcal{V}}_{i}^\flat = {\rm id}_{T_{x_{k}}\mathcal{M}}-\rho_{i}s_{i}^{(k)}u_{i}^{(k)\flat}$. 
   If $m = k$, then it follows from Step \ref{dk2} of Algorithm~\ref{Algo:LRQNBM} that the search direction $d_k$ is computed by
   the two-loop recursion in~\cite[Algo. 2]{huang2015broyden} for limited-memory Riemannian BFGS. Otherwise, $d_k$ is computed by the limited-memory Riemannian SR1. Specifically,
   let $S_k$ and $U_k$ contain the $l$ 
   most recent $s_{k-1}$ and $u_{k-1}$, i.e.
   $$S_k=[s_{k-l}^{(k)},s_{k-l+1}^{(k)},\ldots,s_{k-1}^{(k)}]\ \ {\rm and} \ \ U_k=[u_{k-l}^{(k)},u_{k-l+1}^{(k)},\ldots,u_{k-1}^{(k)}].$$ The limited-memory Riemannian SR1 update is given by
   \begin{equation}\label{Lsr1}
       \mathcal{H}_k = \tau_k{\rm id}_{T_{x_k}\mathcal{M}} - (\tau_kU_k-S_k)(\tau_kU_k^{\flat}U_k-\tilde{R}_k-\tilde{R}_k^{\rm T}+C_k)^{-1}(\tau_kU_k-S_k)^{\flat},\ k>1,
   \end{equation}
   where we set $\tau_k = 1$ for every $k$, and $\tilde{R}_k$ is an upper triangular matrix given as
   \begin{equation*}  
   (\tilde{R}_k)_{ij}=\left\{  
        \begin{array}{ll}  
        \langle s_{k-l+i-1},u_{k-l+j-1}\rangle, & {\rm if}\ \  i\leq j; \\  
        0, &  {\rm otherwise},\\     
        \end{array}  
   \right.  
   \end{equation*} 
   $ (U_k^\flat U_k)_{ij} = \langle u_{k-l+i-1},u_{k-l+j-1}\rangle$, 
   and $C_k$ is a diagonal matrix of the form
   \begin{equation*}
       C_k = {\rm diag}\{\langle s_{k-l},u_{k-l}\rangle,\langle s_{k-l+1},u_{k-l+1}\rangle,\ldots,\langle s_{k-1},u_{k-1}\rangle\}.
   \end{equation*}
   Furthermore, regarding the action of $S_k^\flat$ and $U_k^\flat$, for any $\eta_k \in T_{x_k}\mathcal{M}$, we have
$$S_k^\flat \eta_k = (\langle s_{k-l}^{(k)}, \eta_k \rangle, \ldots,  {\color{magenta}\langle} s_{k-1}^{(k)}, \eta_k \rangle)^{\rm T}, \quad
U_k^\flat \eta_k = (\langle u_{k-l}^{(k)}, \eta_k \rangle, \ldots, \langle u_{k-1}^{(k)}, \eta_k \rangle)^{\rm T}.$$
The explicit computation for the action of the operator $\mathcal{H}_k$ on $\eta_k$ can be found in \cite[Sec. 3.2]{huang2022limited}.
   This expression is the inverse of the limited-memory SR1 update formula provided in \cite[Sec. 4]{huang2015riemannian}.
   
   \begin{remark}
       Algorithm \ref{Algo:LRQNBM} is obtained by replacing (\ref{SR1update}) and (\ref{BFGSupdate}) with (\ref{Lsr1}) and (\ref{LBFGS}) respectively, 
        and removing the correction steps \ref{Step:31}-\ref{Step:38} in Algorithm \ref{Algo:RQNBM}. The reason for this deletion of the correction is that if $\tilde{\rho} {\rm id}_{T_{x_k}\mathcal{M}}$ is added to $\mathcal{H}_k$ in Algorithm \ref{Algo:LRQNBM} (similar to Step \ref{Step: correction} in Algorithm \ref{Algo:RQNBM}), the correction does not work on $S_k$ and $U_k$ updates, i.e., $S_k$ and $U_k$ do not contain information of $\tilde{\rho} {\rm id}_{T_{x_k}\mathcal{M}}$. It follows that Steps \ref{Step:37}-\ref{Step:38} in  Algorithm~\ref{Algo:RQNBM} related to the correction are not applied.
        In addition, we observe that $\mathcal{H}_k$ cannot be represented by $\mathcal{H}_{k-1}$ in (\ref{Lsr1}) when the earliest vector pair $\{s_{k-l}^{(k)}, u_{k-l}^{(k)}\}$ needs to be deleted in $S_k$ and $U_k$. It leads to the conclusion that the boundedness of $\{w_k\}$ and $\{\mathcal{H}_k\}$ required in Theorem \ref{Thm:4.2} cannot be guaranteed. Therefore, the global convergence analysis in Section~\ref{sec4} cannot be used directly. We leave the improvement of Algorithm~\ref{Algo:LRQNBM} with global convergence guaranteed as a future work.
        %
   \end{remark}
   
   \section{Numerical experiment}\label{sec6}
   
   In this section, we investigate the performance of RQNBM (Algorithm \ref{Algo:RQNBM}) and M-RQNBM (Algorithm \ref{Algo:LRQNBM}), and compare them with two state-of-the-art methods including Riemannian proximal bundle method (RPBM) \cite{hoseini2021proximal} and Riemannian $\varepsilon$-subgradient method (RsubGM) \cite{grohs2016varepsilon}.
   The test problems, parameter setting and testing environment are given in Section~\ref{sec6.1}. The effectiveness of the quasi-Newton updates in Algorithms~\ref{Algo:RQNBM} and \ref{Algo:LRQNBM} is demonstrated in Section~\ref{Sec6.2}. In Section~\ref{subsec6.3}, it is shown that Algorithms~\ref{Algo:RQNBM}  and \ref{Algo:LRQNBM} are capable of solving problems with the size of thousands. Moreover, the computational times for solving the quadratic programming subproblems in the compared methods are investigated and reported. Finally, multiple nonsmooth optimization problems with various parameter settings are used to show the effectiveness and efficiency of the proposed methods.



   
   \subsection{Test problems, testing environment, and parameter setting}  \label{sec6.1}
   
   \paragraph{Test problems}
   The numerical experiments use four problems, including the maximum of multiple Rayleigh quotients, the sparse vector problem, the geometric median problem, and the bounding box problem.
   
   \textbf{Maximum of multiple Rayleigh quotients:}
   The maximum of multiple Rayleigh quotients problem is of the form
   \begin{align*}
     \min_{x \in S^n} \ \   & f(x)=\max_{i=1,2,\ldots,m}\frac{1}{2}x^{\rm T}A_ix, 
   \end{align*}
   where $S^n = \{x \in \mathbb{R}^{n+1} \mid x^{\rm T}x = 1\}$ denotes the unit sphere, and $A_i \in \mathbb{R}^{{(n+1)} \times {(n+1)}}, i=1,\ldots, m$ are given symmetric matrices. This problem has been used as a benchmark for testing nonsmooth optimization algorithms on manifolds; see e.g., \cite{grohs2016varepsilon,hoseini2021proximal}.
   The matrices $A_i, i = 1, \ldots, m,$ are selected as $A_i = {\rm diag}(1,2,\ldots, n+1) + 0.1\times (V_i + V_i^{\rm T} )$, where the elements of $V_i \in {\mathbb{R}}^{(n+1)\times (n+1)}$ are randomly selected using the MATLAB function {\bf randn} (Section \ref{Sec6.4}) or {\bf sprand} (Section \ref{subsec6.3}) with density 0.002. The latter setting of $A_i$ is used to simulate the performance of the tested algorithms for solving problems with computationally cheap cost functions.
   
   \textbf{The sparse vector problem:}
   The sparse vector problem \cite{7547961} finds the sparsest vector in an $(n+1)$-dimensional linear subspace $\mathcal{W}$ of $\mathbb{R}^m$. One formulation given in~\cite{7547961} is
   \begin{align*}
     \min_{x \in S^n} \ \   & \|Qx\|_1, 
   \end{align*}
   where $Q\in \mathbb{R}^{m\times (n+1)}$ denotes a matrix whose columns form an orthonormal basis of $\mathcal{W}$, and $\|v\|_1$ is defined by $\sum_{i = 1}^m |v_i|$.
   In this paper, we 
   do not force the matrix $Q$ to be orthonormal and 
       generate $Q$ by $Q = \begin{pmatrix}e_1 & Z \end{pmatrix}$,
   where $e_1 = (1, 0, \ldots, 0)^{\rm T}$ and $Z \in \mathbb{R}^{m \times n}$ is obtained by the function {\bf orth} in MATLAB. Therefore, the minimum value of the objective function is 1. 
   
   \textbf{The geometric median problem:}
   The geometric median on a Riemannian manifold considers the optimization problem
   \begin{align*}
     \min_{x \in \mathcal{M}} \ \   & \sum_{i=1}^{K}w_i\,{\rm dist}(x_i,x), 
   \end{align*}
   where $x_i \in \mathcal{M}$ are given data points, and $\{w_i\}_{i=1}^K$ denotes the weights satisfying $w_i > 0$ and $\sum_{i = 1}^K w_i = 1$. Such a problem has been considered in \cite{grohs2016varepsilon,hoseini2021proximal}. In this paper, the manifold $\mathcal{M}$ is chosen to be the unit sphere, i.e., $\mathcal{M} = S^n$. It follows that ${\rm dist}(x_i, x) = \arccos(x_i^{\rm T} x)$. Note that the objective function is nonsmooth only at $x = x_i$. Therefore, if the minimizer $x^*$ is not equal to any of $x_i$, then the objective function is smooth at $x^*$.
   In this paper, the weights are set by  $w_i=\frac{1}{K}, i = 1, \ldots, K$. The $K$ given points $x_1,x_2,\dots,x_K\in S^{n}$ are randomly generated by applying the Matlab function {\bf orth} to a vector, where each entry of the vector is drawn from the uniform distribution on $[0, 1]$.

   
   \textbf{The bounding box problem:}
       Finding a minimum volume box containing $K$ given points in $d$-dimensional space is called the oriented bounding box problem \cite{Borckmans2010OrientedBB}. Assuming that the matrix $E\in\mathbb{R}^{d\times K}$ is composed of the coordinates of the $K$ given points, the problem is formulated as
   \begin{align*}
     \min_{O \in \mathcal{O}_d} \ \   & f(O)=V(OE)=\prod_{i=1}^d (e_{i,\max}-e_{i,\min}),  
   \end{align*}
   where $\mathcal{O}_d = \{O \in \mathbb{R}^{d \times d} \mid O^{\rm T} O = I_d \}$ is the $d\times d$ orthogonal group, and $e_{i,\max}$ and $e_{i,\min}$ denote max and min elements of the $i$th row of $OE$, respectively. For a given $O$, the objective function $f$ is not differentiable at $O$ if multiple elements in any row reach the maximum or minimum. Such nondifferentiable points usually occur at minima; see \cite{Borckmans2010OrientedBB}.
   In this paper, each entry of $E \in \mathbb{R}^{d\times K}$ is drawn from the uniform distribution  on $[0, 0.75]$.

   \paragraph{Testing Environment}
   All codes are written in MATLAB R2020a and run on a PC Intel Core I5 with CPU 2.4 GHZ and 16GB of RAM. The codes for reproducing the experimental results are available at 
   \url{https://www.math.fsu.edu/~whuang2/papers/RMQNBM.htm}.

   \paragraph{Parameter setting}
   The Riemannian metrics of $S^n$ and $\mathcal{O}_d$ are endowed from the Euclidean space, i.e., $\langle \eta_x,\xi_x\rangle_{x} = \eta_x^{\rm T} \xi_x$ for any $x\in S^{n}, \eta_x, \xi_x \in T_x S^n$ and $\langle \eta_x,\xi_x\rangle_{x} = \mathrm{trace}(\eta_x^{\rm T} \xi_x)$ for any $x\in \mathcal{O}_d, \eta_x, \xi_x \in T_x \mathcal{O}_d$. The retraction on $S^n$ and $\mathcal{O}_d$ is chosen to be $R_x(\eta_x) = {\rm qf}(x + \eta_x)$, where ${\rm qf}(M)$ denotes the ${\rm Q}$ factor of the ${\rm QR}$ decomposition with the diagonal elements of factor ${\rm R}$ being positive. The vector transport by parallelization in~\cite[Sec. 4]{huang2017intrinsic}, which is isometric, is used. Moreover, the locking condition is further guaranteed by using the modification in~\cite[Sec. 4.1]{huang2015broyden} to the vector transport by parallelization.
     The vector transport $\mathcal{T}$ selected in this study satisfies Assumption \ref{assum3}. At the same time, the sphere $S^{n}$, the Stiefel manifold ${\rm St}(n,p)$, and the orthogonal group $\mathcal{O}(d)$ involved in the four problems are all compact Riemannian manifolds. This key property ensures that they naturally satisfy Assumptions \ref{assum4} and \ref{assum1}.
    
   The parameters of Algorithm \ref{Algo:RQNBM} and Algorithm \ref{Algo:LRQNBM} are set to be
   $t_{\rm min} = 2.22\times10^{-16}$, $t_{\max}=1$, $\mu_0=0.18$, $D=1$, ${\theta_{\rm A}}={\theta_{\rm L}}=0.1$, ${\theta_{\rm R}}=0.45$, $\theta_{\rm T}=0.2$, $\gamma=0.15$, $\theta = 1$, $\kappa=0.25$, $\nu =2$, $\tilde{D} = 10^{4}$, and $\Gamma=50$. For all values $k<2^d\times n$ (if $d$ does not exist in an example, we set $d=0$), we set $\rho=0.1$. For $k \geq 2^d\times n$, we set $\rho=10^{-3}$. 
   The parameter $\tilde{\rho} = 0.1$ for $k<500$, otherwise $\tilde{\rho} = 10^{-3}$.  Unless otherwise indicated, the parameter $m_l$ is set to $8$. 
   In Step \ref{LS:14} of Procedure \ref{Procedure:LS}, the point $t$ located at one-quarter of the interval length from the left endpoint is selected. The initial {\color{magenta}symmetric positive-semidefinite} matrix is set to be the identity. The initial point is generated by applying the MATLAB function {\bf{orth}} to a vector (matrix), where the vector (matrix) is obtained by the function {\bf{rand}} ({\bf{randn}}) in MATLAB. 
   The default parameters in the two compared methods (RsubGM and RPBM) are used except that in RsubGM the Armijo parameter $c$ is set to be $5\times 10^{-3}$ and in RPBM the injectivity radius $\varepsilon$ and the descent parameter $m_L$ are set to be $0.05$ and $2\times 10^{-4}$ respectively. These changes are made so that the compared algorithms perform better when solving large-scale problems.
   

   Unless otherwise indicated, all the tested algorithms terminate if the norm of the corresponding search direction is smaller than $10^{-5}$ or the number of iterations exceeds 5000. The quadratic programming problem in all the tested algorithms is solved by the {\bf quadprog} function in MATLAB. When an average of random runs is reported, the problem generations
   and the initial points of all the tested algorithms are the same.

   \subsection{Performance for updating $\mathcal{H}_k$}\label{Sec6.2}
   
   Table~\ref{table1} contains an average result of 10 runs using different dimensions for solving the oriented bounding box problem. 
   The parameter $K$ is set to be 1000 and multiple values of $d$ are used, i.e., $d=3,4,5,6,8,10$. RQNBM-NO denotes Algorithm \ref{Algo:RQNBM} with the updates of $\mathcal{H}_k$ being skipped. We say that all algorithms find the same minimum if the local minimizers returned by any two algorithms satisfy $\| \hat{x}_* - \tilde{x}_*\| \leq 10^{-3}$, where $\hat{x}_*$ and $\tilde{x}_*$ are outputs from the two algorithms. Note that the bounding box problem may have multiple local minimizers. For a fair comparison, an average of 10 runs is taken where all the algorithms converge to the same minimizer\footnote{If the algorithms converge to different minimizers, then the convergence speeds of those algorithms may differ due to the various landscapes of the cost function around the different minimizers, not due to the differences between the algorithms.}.
   
   
    
   Table \ref{table1} shows that the RQNBM and M-RQNBM with appropriate memory size $m_l$ outperform RQNBM-NO in the sense of the number of function evaluations and the computational time.
   This coincides with the intuition that quasi-Newton update improves performance.
   Moreover, the computational time and the number of function evaluations decrease as $m_l$ increases in M-RQNBM. This implies that a larger memory size in M-RQNBM gives a better quasi-Newton update operator and reduces the function evaluations. However, a larger memory size increases the computational cost when computing $d_k$. For the bounding box problems, the dominant computation is on the function evaluation. It follows that the total computational time decreases as $m_l$ increases. Such behaviors may not occur for other problems.
   
   Though the computational time of solving quadratic programming in RQNBM and RQNBM-NO accounts for a comparatively large proportion of the total time in this section, this is not typical. Note that the proposed methods solve $3$-dimensional quadratic programming problems in null steps, which is independent of the dimension of the domain manifold. Therefore, as the dimension of the manifold increases, 
   the proportion of the computational time for quadratic programming is expected to decrease in general, as shown in Section~\ref{subsec6.3} where large-scale problems are tested.

   \begin{table}[!htbp]
   \caption{
   An average result of 10 random runs using multiple dimensions oriented bounding box problems. The subscript $k$ indicates a scale of $10^k$.
   $f_{\rm opt}$, $n_f$, $t_{\rm quad}$, and $t$ respectively denote the final objective value, the number of function evaluations, the time of quadratic programming calculation, and the run time. \label{table1} }
   \renewcommand{\arraystretch}{1.2}
   \setlength{\tabcolsep}{15pt}
   \begin{tabular}{l |c |c c c c c}
   \hline
   \multicolumn{1}{l|}{\multirow{1}*{Method}  } &\multicolumn{1}{c|}{\multirow{1}*{$m_l$}}   &\multicolumn{1}{c}{\multirow{1}*{$f_{\rm opt}$}} &\multicolumn{1}{c}{\multirow{1}*{$n_{f}$}  }  &{\multirow{1}*{$t_{\rm quad}$}}   &\multicolumn{1}{c}{\multirow{1}*{$t$}}\\
   
   \hline  
  RQNBM-NO  &$-$& $2.15_{-1}$  & $1.17_{4}$    & $2.55$  & $5.45$   \\
   RQNBM   &$-$&  $2.15_{-1}$  & $2.61_{3}$   & $8.41_{-1}$  & $1.58$  \\
   M-RQNBM  &4& $2.15_{-1}$  & $7.86_{3}$   & $1.73$  & $5.55$ \\
   &8& $2.15_{-1}$  & $4.90_{3}$    & $1.10$  & $4.46$   \\
   &16& $2.15_{-1}$  & $2.05_{3}$   & $4.58_{-1}$  & $2.67$   \\
   &32& $2.15_{-1}$  & $6.83_{2}$   & $1.46_{-1}$  & $1.19$   \\
   \hline
   \end{tabular}
   \end{table}

   \subsection{Test large-scale problems} \label{subsec6.3}
   
   The maximum of multiple Rayleigh quotients is chosen as the representative problem for comparing the proposed methods with the two state-of-the-art methods, RPBM and RsubGM.
   We say that an optimal solution is found if the value of objective function $f$ satisfies
   \begin{equation*}
       0\leq \frac{f-f_{\rm opt}}{\vert f_{\rm opt}\vert +1}\leq 10^{-4},
   \end{equation*} 
   where $f_{\rm opt}$ is the known optimal objective function value or the minimum value found by the used algorithms.
   
   The results of RQNBM, M-RQNBM, RPBM, and RsubGM with $n =5000,10000$, $m=2$ are reported in Table \ref{table2}.  The bundle size is set to be $n/100$ in RPBM. Since the paper of RPBM~\cite[Sec. 5]{hoseini2021proximal} proposes to use the exponential mapping and qf retraction, 
   RQNBM and M-RQNBM are compared to both RPBM-exp and RPBM-qf, where the postfixes ``exp'' and ``qf'' respectively denote the combination with the exponential mapping and the qf retraction.
   As shown in Table \ref{table2}, RsubGM and RPBM take more numbers of function evaluations and spend more time solving the quadratic programming subproblems. This is consistent with the intuitions that (i) quasi-Newton updates in RQNBM and M-RQNBM improve the performances in the sense of reducing the function evaluation, and (ii) the dimension of the quadratic programming subproblems in RPBM and RsubGM may increase as the dimension of the manifold increases whereas the dimension of the subproblem in RQNBM is always three. Note that for large-scale problems, RQNBM may take much computational time in updating the quasi-Newton operator. Therefore, the most efficient algorithm is the M-RQNBM under the circumstances of this section.

   \begin{table}[!htbp]
   \caption{An average result of 10 random runs using different dimensions for the maximum of multiple Rayleigh quotients. $t_f$ denotes the computational time of the function evaluations.\label{table2} }
   \renewcommand{\arraystretch}{1.5}
   \setlength{\tabcolsep}{4pt}
   \begin{tabular}{l| c c c c c |c c c c c}
   \hline
   
   \multicolumn{1}{c|}{\multirow{1}*{$n$}  }  &\multicolumn{5}{c}{\multirow{1}*{$5000$}  } &\multicolumn{5}{|c}{\multirow{1}*{$10000$}}\\
   \hline
   \multicolumn{1}{c|}{} &$f_{\rm opt}$ &$n_{f}$ &$t_f$  &$t_{\rm quad}$ &$t$ &$f_{\rm opt}$ &$n_{f}$ &$t_f$  &$t_{\rm quad}$ &$t$\\
   \hline
   
   \hline
    
   RQNBM &  $5.00_{-1}$  & $1.58_{3}$  & $2.80_{-1}$ &  $1.04$  &$5.25_{1}$ & $5.00_{-1}$  & $2.68_{3}$  & $1.61$ &  $1.91$  &$3.87_{2}$ \\
   M-RQNBM &  $5.00_{-1}$  & $2.87_{3}$  & $4.31_{-1}$ &  $9.73_{-1}$  &$6.99$ & $5.00_{-1}$  & $5.11_{3}$  & $2.87$ &  $2.68$  &$2.75_{1}$ \\
   RPBM-exp&  $5.00_{-1}$  & $1.28_{4}$   & $1.88$  & $2.73_{1}$ & $4.84_{1}$ &  $5.00_{-1}$  & $2.47_{4}$   & $1.34_{1}$  & $3.21_{2}$  &$4.53_{2}$ \\
   RPBM-qf  & $5.00_{-1}$  & $1.20_{4}$    & $1.85$  & $2.57_{1}$  &$4.56_{1}$ & $5.00_{-1}$  & $2.54_{4}$   & $1.40_{1}$  & $3.31_{2}$  &$4.57_{2}$ \\
   RsubGM & $5.00_{-1}$  & $9.22_{3}$   & $1.57$  & $7.02_{1}$ & $8.56_{1}$& 
   $5.00_{-1}$   & $1.34_{4}$  & $7.46$  & $4.23_{2}$  & $5.24_{2}$  \\
   \hline
   
   \end{tabular}
   \end{table}
   
   \subsection{Comparison of RQNBM, M-RQNBM, RPBM, and RsubGM }\label{Sec6.4}
   
   The performance profiles in~\cite{dolan2002benchmarking} are used to compare the performance of different algorithms. Specifically, define $t_{p, s}$ to be the computational time required to solve problem $p$ by an algorithm $s$ and define the performance ratio to be
   $
   r_{p, s} = \frac{t_{p, s}}{ \min\{ t_{p, s} : s \in \mathcal{S} \} },
   $
   where $\mathcal{S}$ denotes the set of algorithms. The probability for an algorithm $s \in \mathcal{S}$ to have a performance ratio $r_{p, s}$ smaller than a factor $\tau$ is defined by
   $
   \rho_s(\tau) = \frac{1}{\sharp(\mathcal{P})} \sharp \{ p \in \mathcal{P} : r_{p, s} \leq \tau \},
   $
  where $\sharp$ denotes the number of entries in the following set, and $\mathcal{P}$ denotes the set of tested problems. 
   The performance profiles of the tested algorithms are shown by plotting $\rho_s$, for all $s \in \mathcal{S}$.

   Since the performance of the tested algorithms varies based on the size of the problems, we split the tested problems as shown in Table~\ref{table3}\footnote{Note that ``small'', ``medium'', and ``large'' refer to {\bf RELATIVELY} small, medium, and large size problems.}. For each combination of parameters, 10 random runs are used. The performance profiles of RQNBM, RPBM-exp, RPBM-qf, RsubGM, and M-RQNBM for small-, medium-, and large-scale problems are reported respectively in Figs.~\ref{fig.1}, \ref{fig.2}, and \ref{fig.3}.
   It can be seen that the proposed methods, RQNBM and M-RQNBM, outperform RPBM-exp, RPBM-qf, and RsubGM since the curves of RQNBM and M-RQNBM are on the upper-left corner of the figures. In Figs. \ref{fig.1} and \ref{fig.2}, the computational time of RQNBM is smaller than or comparable to 
   M-RQNBM, whereas in Fig.~\ref{fig.3}, M-RQNBM is faster than RQNBM. Such performance verifies that M-RQNBM reduces the computational time of the quasi-Newton updates.

   \begin{table}[!htbp]
   \caption{The parameter settings for small/medium/large scale problems. MRQ, SVP, GMP, and BBP respectively denote the maximum of multiple Rayleigh quotients, the sparse vector problem, the geometric median problem, and the bounding box problem. \label{table3} }
   \renewcommand{\arraystretch}{1.5}
   \setlength{\tabcolsep}{4pt}
   \begin{tabular}{c| c c c c }
   \hline 
   Parameters &  MRQ & SVP & GMP & BBP \\
   \hline 
   \multirow{2}*{Small} & $n=5$ & $n=3,6,9$ & $n=3,5,10,20,50$  & $d=3,4$  \\
    &  $m=20,100,200,500$ & $m=10n$ &  $K=5000$ & $K=1000$ \\
   \hline 
   \multirow{2}*{Medium} & $n=10$  & $n=12, 15$  & $n=100,200,500$  & $d=5,6$ \\
    & $m=20,100,200,500$  & $m=10n$  & $K=5000$  & $K=1000$ \\
   \hline 
   \multirow{2}*{Large} & $n=20$  & $n=18,21$  &  $n=1000,2000$ & $d=8, 10$ \\
   & $m=20,100,200,500$ & $m=10n$  &  $K=5000$ & $K=1000$ \\
   \hline
   \end{tabular}
   \end{table}

   \captionsetup[figure]{labelfont={bf},name={Fig.},labelsep=space}

   \begin{figure}[!htbp]
   \centering
   \begin{minipage}[t]{1\textwidth}
   \centering
   \includegraphics[width=10cm]{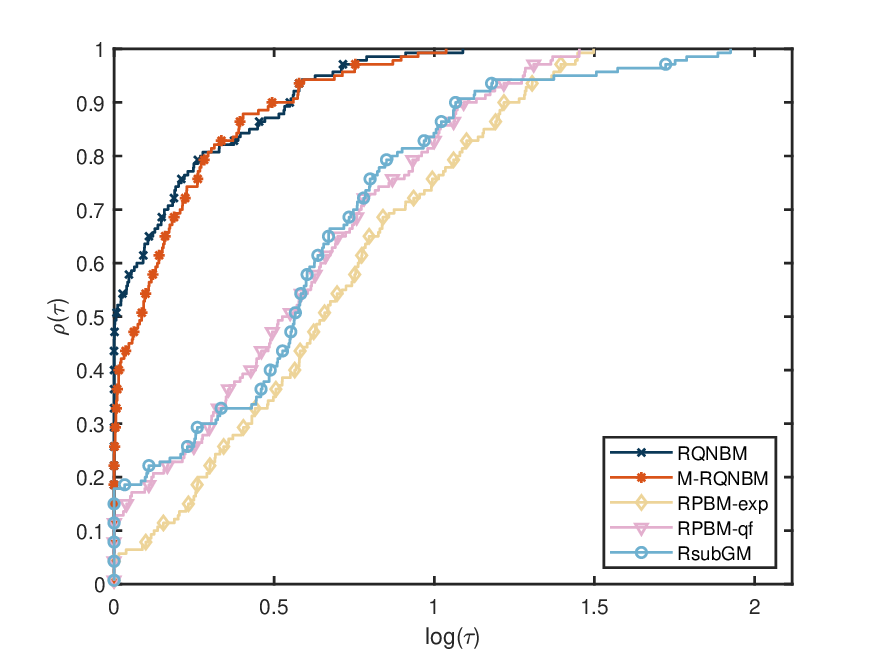}
   \end{minipage}
   \caption{The performance profiles for the small-scale problems.}
   \label{fig.1}
   \end{figure}

   \begin{figure}[!htbp]
   \centering
   \begin{minipage}[t]{1\textwidth}
   \centering
   \includegraphics[width=10cm]{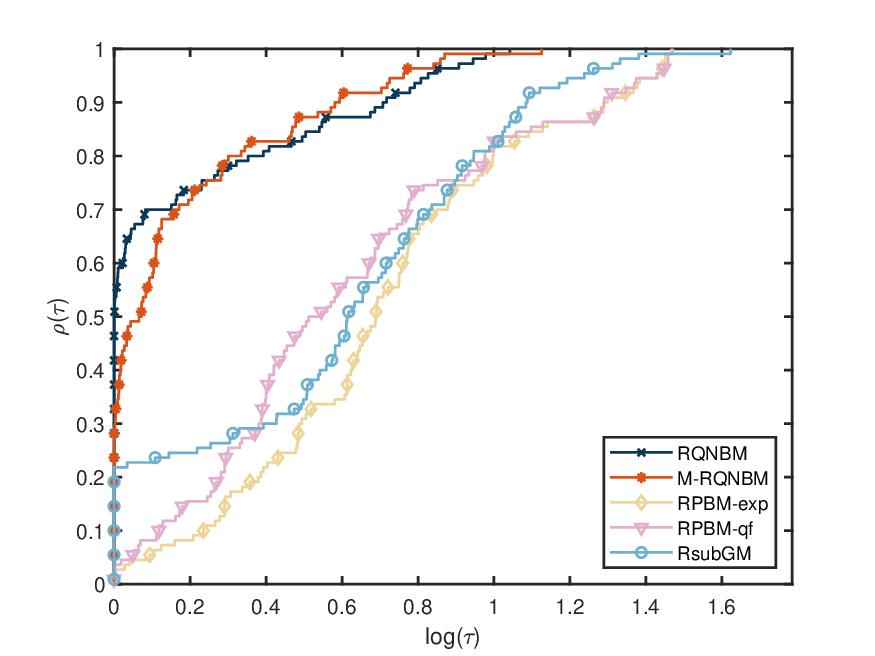}
   \end{minipage}
   \caption{The performance profiles for the medium-scale problems.}
   \label{fig.2}
   \end{figure}
   

   \begin{figure}[!htbp]
   \centering
   \begin{minipage}[t]{1\textwidth}
   \centering
   \includegraphics[width=10cm]{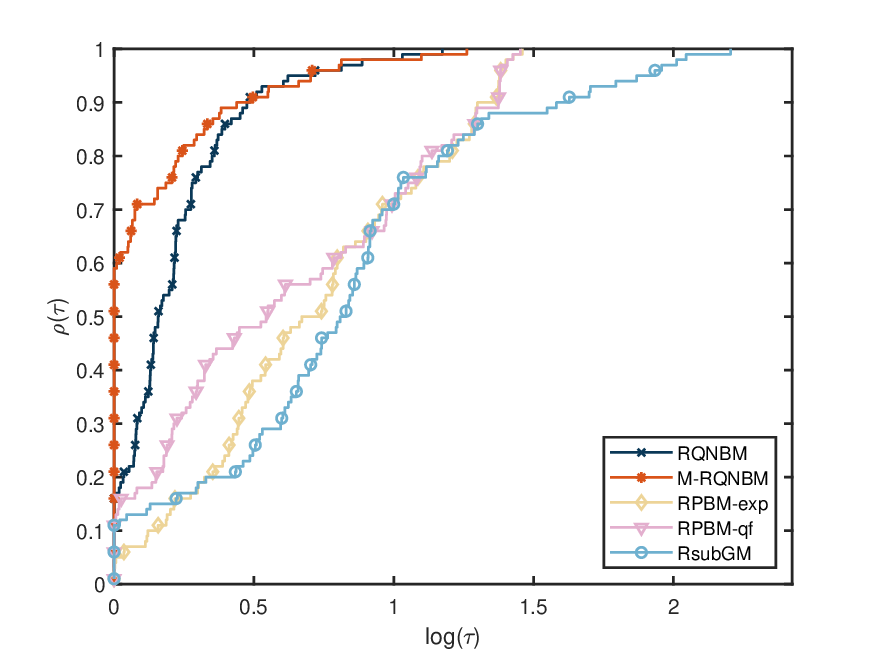}
   \end{minipage}
   \caption{The performance profiles for the large-scale problems.}
   \label{fig.3}
   \end{figure}

   \section{Conclusions}\label{sec7}
   In this paper, we have proposed a restricted memory quasi-Newton bundle method for minimizing locally Lipschitz {\color{magenta}continuous} functions over Riemannian manifolds. The method approximates the curvature information of the objective function using Riemannian versions of the quasi-Newton updating formulas. The subgradient aggregation technique is introduced to avoid solving time-consuming quadratic subproblems, and a new Riemannian line-search procedure is proposed to generate stepsizes. Global convergence is proved, and a modified algorithm with limited-memory quasi-Newton updates is presented.
   Numerical experiments demonstrate that the proposed methods have robust performance compared with the closely related methods. 
   


   \subsection*{Acknowledgments}
   The authors sincerely thank the Editor-in-Chief and the two anonymous reviewers for their valuable comments. Their professional suggestions on mathematical rigor, notation consistency and presentation details have greatly improved the quality and readability of this paper.
   
   \subsection*{Statements and Declarations}
   \subsubsection*{Funding information}
   This work was supported by the National Natural Science Foundation of China (Nos.  12271113, 12371311, 12171106), the Natural Science Foundation of Fujian Province (No. 2023J06004), the Fundamental Research Funds for the Central Universities (No. 20720240151) and Xiaomi Young Talents Program.
   \subsubsection*{Conflict of interest}
    The authors declare that they have no conflict of interest.
   \subsubsection*{Availability of data and materials} 
   The authors declare that the codes and data supporting the findings of this study are available within the paper.

\bibliography{RQNBM1}

\end{document}